\newtheorem{theorem}{Theorem}[section]
\newtheorem{lemma}{Lemma}[section]
\newtheorem{corollary}{Corollary}[section]
\newtheorem{proposition}{Proposition}[section]
\newcommand{\cK}{{\cal K}}
\newcommand{\cP}{{\cal P}}
\newcommand{\tp}{^{\mathrm t}}
\newcommand{\tpl}{\;{\mathrm t}}
\newcommand{\tr}{\mathop{{\rm tr}}}
\newcommand{\co}{\mathop{{\rm co}}}
\newcommand{\lmaxN}{\lambda_{\max,N}}
\newcommand{\lminN}{\lambda_{\min,N}}
\newlength{\IndentI}
\newlength{\IndentII}
\newlength{\IndentIII}
\newlength{\WidthI}
\newlength{\WidthII}
\newlength{\WidthIII}
\title{Sequential optimizing strategy in  multi-dimensional bounded forecasting games}
\author{
Masayuki Kumon\footnote{Risk Analysis Research Center, The Institute of Statistical Mathematics, Research Organization of Information and Systems,  masayuki\_kumon@smile.odn.ne.jp}  
\\
Akimichi Takemura
\footnote{
Graduate School of Information Science and Technology,
University of Tokyo, \   
7-3-1 Hongo, Bunkyo-ku, Tokyo 113-8656, JAPAN, \ 
takemura@stat.t.u-tokyo.ac.jp}
\\
Kei Takeuchi
\footnote{
Emeritus, Graduate School of Economics, 
University of Tokyo}
}
\date{November 2009}
\begin{document}
\maketitle

\begin{abstract}
  We propose a sequential optimizing betting strategy in the
  multi-dimensional bounded forecasting game in the framework of
  game-theoretic probability of Shafer and Vovk (2001). By studying
  the asymptotic behavior of its capital process, we prove a generalization
  of the strong law of large numbers, where the convergence rate of the sample
  mean vector depends on the growth rate of
  the quadratic variation process.  The
  growth rate of the quadratic variation process may be slower than
  the number of rounds or may even be zero.
  We also introduce an 
  information criterion for selecting efficient betting
  items. These results are then applied to multiple asset trading strategies
  in discrete-time and 
  continuous-time games. In the case of continuous-time game 
  we present a measure of the jaggedness of a
  vector-valued continuous process.
  Our results are examined by several numerical examples.
\end{abstract}

\noindent
{\it Keywords and phrases:} 
game-theoretic probability,
H\"older exponent,
information criterion,
Kullback-Leibler divergence, 
quadratic variation,
strong law of large numbers, 
universal portfolio.

\section{Introduction}
\label{sec:intro}

Since the advent of the game-theoretic probability and finance 
by Shafer and Vovk \cite{shafer/vovk:2001}, the field has been expanding
rapidly. The present authors have been contributing to this emerging field
by  showing the essential role of the Kullback-Leibler divergence
for the strong law of large numbers (SLLN) \cite{kumon-takemura,ktt:2007b}
and by proposing a new approach
to continuous-time games \cite{takeuchi-kumon-takemura-bernoulli,takeuchi-kumon-takemura-markov}.
Our approach to continuous-time games has been  further developed by
V.~Vovk \cite{vovk07-stochastics, vovk08-ecp,vovk0904.4364}.

In this paper we propose a sequential optimizing betting strategy for
the multi-dimensional bounded forecasting game in discrete time and
apply it as a high-frequency limit order type betting strategy for
vector-valued continuous price processes.  

Our strategy is very flexible and the analysis of its asymptotic
behavior allows us to generalize game-theoretic statements of SLLN to
a wide variety of cases.  SLLN for the bounded forecasting game is
already established in Chapter 3 of \cite{shafer/vovk:2001}. In
\cite{kumon-takemura} we gave a simple strategy forcing SLLN
with the rate of $O(\sqrt{\log n/n})$, where $n$ is the number of
rounds.  However the convergence rate of SLLN should depend on the
growth rate of the quadratic variation process.  For example, in view of
Kolmogorov's three series theorem (e.g.\ Section IV.2 of
\cite{shiryaev:1996}), the sum $s_n = x_1 + \dots + x_n \in {\mathbb R}^1$ of 
centered  independent 
measure-theoretic random variables converges a.s.\ 
if the sum of their variances converges (i.e.\ $\sum_n{\rm
  Var}(x_n) < \infty$).  Therefore in this case the sample average
$\bar x_n=s_n/n$ is of order $O(1/n)$.  By our sequential optimizing
betting strategy, we can give a unified game-theoretic treatment on
the asymptotic behavior of $s_n$, which depends on the asymptotic
behavior of $\sum_{i=1}^n x_i^2$ as $n \rightarrow\infty$.  

The strength of our results can be seen when we interpret our results
in the standard measure-theoretic framework.  Let
$s_n = x_1 + \dots + x_n$ be a 
one-dimensional measure-theoretic  martingale w.r.t.\ 
a filtration $\{{\cal F}_n\}$ 
with uniformly  bounded differences $\Vert x_n\Vert \le 1$, a.e. 
Let $V_n=x_1^2 + \dots + x_n^2$.  Then with probability one the sequence 
$\Vert s_n\Vert/\sqrt{\max(0,V_n \log V_n)}$, $n=1,2,\dots$, is bounded.
See Proposition \ref{prop:measure} below.  Note that in this statement no assumption is
made on the growth rate of $V_n$. The rate itself may be random.

{}From more practical viewpoint, our sequential betting strategy is very
simple to implement even for high dimensions and shows a very
competitive performance when applied to various price processes.  In
Section \ref{sec:numerical} we compare the performance of our strategy
with the well-known universal portfolio strategy developed by Thomas
Cover and collaborators (\cite{cover:1991,cover-ordentlich,ordentlich-cover,cover-thomas:2006}).  
The performance
of our sequential betting strategy is competitive against the universal
portfolio.  Note that the numerical integration needed for 
implementing universal portfolio is computationally heavy for high dimensions.

When we can bet on a large number of price processes, it is not always
best to form a portfolio comprising all price processes, because
estimating the best weight vector for the price processes might take a
long time.  By approximating the capital process  of our sequential
optimizing strategy, we will introduce an information criterion 
for selecting price processes in a portfolio.

The organization of this paper is as follows.  In Section
\ref{sec:sequential-strategy} we formulate the multi-dimensional
bounded forecasting game,
introduce our sequential optimizing strategy 
and state our main theoretical result.
In Section \ref{sec:structure}
we give a proof of our result 
by analyzing asymptotic behavior of its capital process.
We also introduce
an information criterion for selecting
efficient betting items.  These results are then applied to multiple asset
trading games in 
Section \ref{sec:high}.
In Section \ref{sec:high} 
we formulate the multiple asset trading game in continuous time and 
based on high-frequency limit order type betting strategies 
we present a measure for the jaggedness of a path of
a vector-valued continuous process. 
In Section \ref{sec:generality}, as indicating the generality of our
results, we provide a multiple type of Girsanov's theorem for geometric Brownian motion and an
argument concerning the mutual information quantity between betting
games.  In Section \ref{sec:numerical} we give numerical results for
several Japanese stock price processes.  We conclude the paper with
some remarks in Section \ref{sec:discussion}.

\section{A sequential optimizing strategy and its implication to strong law of large numbers}
\label{sec:sequential-strategy}

We treat the following type of discrete time bounded forecasting game
between Skeptic and Reality. 
$\cK_0$ is the initial capital of Skeptic, 
$D$ is a compact region 
in ${\mathbb R}^d$ such that its convex hull 
$\co D$ 
contains the origin in its interior,  and $\cdot$ denotes the standard
inner product of ${\mathbb R}^d$.

\bigskip\noindent
\textsc{Discrete Time Bounded Forecasting Game} \\
\textbf{Protocol:}

\parshape=6
\IndentI   \WidthI
\IndentI   \WidthI
\IndentII  \WidthII
\IndentII  \WidthII
\IndentII  \WidthII
\IndentI   \WidthI
\noindent
${\cal K}_0 :=1$.\\
FOR  $n=1, 2, \dots$:\\
  Skeptic announces $M_n\in{\mathbb R}^d$.\\
  Reality announces $x_n\in D$.\\
  ${\cal K}_n = {\cal K}_{n-1}+ M_n\cdot x_n$.\\
END FOR\\

In this paper we regard $d$-dimensional
vectors such as $x_n = (x_n^1, \dots, x_n^d)\tp$ as column vectors
with $\tp$ denoting the transpose. $\Vert x\Vert = \sqrt{x\tp x}=\sqrt{x\cdot x}$ denotes
the Euclidean
norm of $x$.  
Letting $\alpha_n = M_n /\cK_{n-1}$, we can rewrite
Skeptic's capital as ${\cal K}_n = {\cal K}_{n-1}(1 + \alpha_n\cdot
x_n),\ \alpha_n \in {\mathbb R}^d$.  
In the protocol, we require that Skeptic observes his collateral duty,
in the sense that $\cK_n \ge 0$ for all $n$ irrespective of 
Reality's moves $x_1, x_2, \dots$.

For constructing a strategy of Skeptic, consider that
Skeptic himself generates
`training data' $\{x_{-n_0+1},x_{-n_0+2}, \dots, x_0\}$ of size $n_0 \ge d+1$.
This operation is similar to a construction of a prior distribution in Bayesian statistics, where
a prior distribution can be specified by a set of prior observations.
Throughout this paper we fix an arbitrarily $\epsilon_0\in (0,1)$ and 
choose the training data  $\{x_{-n_0+1}, \dots, x_{0}\}$ in such a way that
\begin{equation}
\label{eq:2-0}
1 + \alpha\cdot x_n \ge 0, \ n = -n_0+1,  \dots, 0, \quad \Rightarrow\quad
1+ \alpha\cdot x \ge \epsilon_0, \   \forall x\in D.
\end{equation}
Let $P^d_{n_0,\epsilon_0}=
\{ \alpha \mid 1 + \alpha\cdot x_n \ge 0, \ n = -n_0+1, \dots, 0\}$.
Then (\ref{eq:2-0}) is equivalent to 
\[
P^d_{n_0,\epsilon_0} \subset -(1-\epsilon_0)(\co  D)^\perp,
\]
where $(\co  D)^\perp$ denotes the convex dual of $\co D$.
For example, for $d=1$ and $D=[-1,1]$, we can take
$x_{-1} = 1/(1-\epsilon_0)$ and $x_0 = -1/(1-\epsilon_0)$.
Then $\alpha$ has to satisfy $|\alpha| \le  1-\epsilon_0$ and
the right-hand side of (\ref{eq:2-0}) holds.  For general $D \subset {\mathbb R}^d$ 
let $\bar\delta=\max_{x\in D} \Vert x \Vert$ and let $c=\bar\delta\sqrt{d}/(1-\epsilon_0)$.
Then we can take
$n_0 = 2d$ training vectors as 
\[
(0,\dots,0,\pm c,0,\dots,0)\tp,
\]
where  $c$ is in the $i$-th coordinate ($1\le i\le d$).
Then each element $\alpha^i$, 
$1\le i\le d$, of 
$\alpha=(\alpha^1, \dots, \alpha^d)\tp$ has to satisfy
$|\alpha^i|\le 1/c$
and 
$\Vert\alpha\Vert \le (1-\epsilon_0)/\bar\delta$. Hence the right-hand side  of (\ref{eq:2-0}) holds
by Cauchy-Schwarz inequality. It should also be noted that (\ref{eq:2-0}) implies that
the training vectors span the whole ${\mathbb R}^d$.

The strategy with a constant vector
$\alpha_n \equiv \alpha
\in {\mathbb R}^d$ 
is called a constant proportional betting strategy.  For $N\ge 0$ we  define
\begin{equation}
\label{eq:phiN}
\Phi_{0,N}(\alpha) = 
\sum_{n=-n_0+1}^N \log (1 + \alpha\cdot x_n), 
\end{equation}
which is the log capital at round $N$ under 
the constant proportional betting strategy, including the training data. 
We add `$0$' to the subscript to indicate that the training data are included in a
summation. 
Since the game starts at time $1$, actually
the log capital of the constant proportional betting 
strategy is $\Phi_{0,N}(\alpha)-\Phi_{0,0}(\alpha)=
\sum_{n=1}^N \log (1 + \alpha\cdot x_n)$.

Let us consider the maximization of $\Phi_{0,N}(\alpha)$ with respect to $\alpha \in {\mathbb R}^d$.  
The maximum corresponds to the log capital at time $N$ 
of a `hindsight' constant proportional betting strategy.
Note that $\Phi_{0,N}(\alpha)$ is a strictly concave function
of $\alpha$. 
The condition (\ref{eq:2-0})  ensures that
the maximum of $\Phi_{0,N}(\alpha)$ is attained  at
the unique point $\alpha = \alpha_N^*$
in the interior of $P^d_{n_0,\epsilon_0}$ 
so that
\begin{align}
\label{eq:2-1}
&
\frac{\partial \Phi_{0,N}}{\partial \alpha}\Bigm|_{\alpha = 
\alpha_N^*} 
= 
\sum_{n=-n_0+1}^N \frac{x_n}
{1 + \alpha_N^*\cdot x_n} = 0.
\end{align}
{}From numerical viewpoint we note that the numerical maximization of 
$\Phi_{0,N}(\alpha)$ is straightforward even in high dimensions.

We now define {\it sequential optimizing strategy} (SOS)
of Skeptic, which is a realizable strategy unlike 
the hindsight strategy.  It is given by
\begin{align}
\label{eq:2-3b}
\alpha_n = \alpha_{n-1}^* ,\ \ n \ge 1.
\end{align}
The idea of SOS is very simple.  We employ the empirically best constant proportion
until the previous round for betting at the current round.
Note that SOS depends on the choice of the training data.
Skeptic's log capital $\log {\cal K}_{1,N}^*$  at round $N$
under SOS 
is written as
\[
\log \cK_{1,N}^*=\sum_{n=1}^N \log (1 + \alpha_{n-1}^*\cdot x_n).
\]

Let $\xi=x_1 x_2 \dots \in D^\infty$ 
denote a {\em path}, which is an infinite sequence of Reality's moves.
The set $\Omega=D^\infty$ of paths 
is called the sample space and any subset $E$ of $\Omega$ is called
an event.
$\xi^n=x_1\dots x_n$ denotes a partial path of Reality until the round $n$.
A strategy $\cP$ specifies $\alpha_n$ in terms of $\xi^{n-1}$, i.e.\ $\alpha_n=\cP(\xi^{n-1})$.
The capital process under $\cP$ is given as 
$\cK^{\cP}_{1,N}=\prod_{n=1}^N (1+\cP(\xi^{n-1})\cdot x_n)$.
$\cP$ is called prudent, if Skeptic observes his collateral duty
by $\cP$, i.e.\  $\cK^{\cP}_{1,N} \ge 0$, $\forall N\ge 0$,   irrespective of
Reality's moves $x_1, x_2, \dots$.  In this paper we only consider prudent strategies
of Skeptic.
We say that Skeptic can weakly force an event $E\subset \Omega$ by a strategy $\cP$ if
$\limsup_N \cK^{\cP}_{1,N}=\infty$ for every $\xi\not\in E$.
As in Section \ref{sec:intro} we write
\begin{equation}
\label{eq:snvn}
s_N = x_1 +\dots+ x_N \in {\mathbb R}^d , 
\quad V_N = x_1 x_1\tp + \cdots + x_N x_N\tp \quad(: d\times d) .
\end{equation}
Then $\tr V_N = \sum_{n=1}^N \Vert x_n\Vert^2$.
We are now ready to state our main theorem.

\begin{theorem}
\label{thm:gen-slln}
By the sequential optimizing strategy Skeptic can
weakly force 
\begin{equation}
\label{eq:gen-slln}
E: \limsup_N \frac{\Vert s_N\Vert}{\sqrt{\max(1,\tr V_N \log(\tr V_N)})} < \infty.
\end{equation}
\end{theorem}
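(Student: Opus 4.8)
The plan is to sandwich the log--capital of the sequential optimizing strategy (SOS) between the hindsight log--capital and a regret term. Put $f_n(\alpha)=\log(1+\alpha\cdot x_n)$, so that $\Phi_{0,N}=\sum_{n=-n_0+1}^{N}f_n$ is concave and enjoys the identity $-\nabla^{2}f_n(\alpha)=\nabla f_n(\alpha)\,\nabla f_n(\alpha)\tp$. I would first record the ``follow--the--leader/be--the--leader'' telescoping
\[
\log\cK_{1,N}^{*}=\sum_{n=1}^{N}f_n(\alpha_{n-1}^{*})=\bigl(\Phi_{0,N}(\alpha_N^{*})-\Phi_{0,0}(\alpha_0^{*})\bigr)-R_N ,
\]
where, writing $I_n=\Phi_{0,n}(\alpha_n^{*})-\Phi_{0,n-1}(\alpha_{n-1}^{*})$, one checks $f_n(\alpha_{n-1}^{*})\le I_n\le f_n(\alpha_n^{*})$, hence $0\le R_N\le\sum_{n=1}^{N}\bigl(f_n(\alpha_n^{*})-f_n(\alpha_{n-1}^{*})\bigr)$. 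This reduces the theorem to two estimates: (a) a lower bound on the hindsight term of order $\Vert s_N\Vert^{2}/\tr V_N$, and (b) an upper bound $R_N=O(d\log^{+}\tr V_N)$.

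For (a), since $\alpha_N^{*}$ maximizes $\Phi_{0,N}$, for any fixed prudent constant bet $\alpha$ in a small ball around the origin (so the ``prior'' part $\Phi_{0,0}(\alpha)$ is bounded below by a constant $-C_0$ independent of $N$) one has $\Phi_{0,N}(\alpha_N^{*})-\Phi_{0,0}(\alpha_0^{*})\ge\sum_{n=1}^{N}f_n(\alpha)-C_0$. I would plug in $\alpha=\lambda\,s_N/\Vert s_N\Vert$ with $0<\lambda\le\lambda_0$ for a fixed $\lambda_0$ satisfying $\lambda_0\bar\delta\le 1/2$, use $\log(1+u)\ge u-u^{2}$ for $|u|\le 1/2$, and note $s_N\tp V_N s_N\le\tr V_N\,\Vert s_N\Vert^{2}$, to get $\sum_{n=1}^{N}f_n(\alpha)\ge\lambda\Vert s_N\Vert-\lambda^{2}\tr V_N$; optimizing $\lambda$ over $(0,\lambda_0]$ yields
\[
\Phi_{0,N}(\alpha_N^{*})-\Phi_{0,0}(\alpha_0^{*})\ \ge\ c_1\,\min\!\Bigl(\frac{\Vert s_N\Vert^{2}}{\tr V_N},\ \Vert s_N\Vert\Bigr)-C_0 .
\]

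For (b) the training data do the real work: by (\ref{eq:2-0}) every $\alpha\in P^d_{n_0,\epsilon_0}$ obeys $\epsilon_0\le 1+\alpha\cdot x\le C_u$ for all $x\in D$, and the training vectors span ${\mathbb R}^d$, so $V^{(0)}:=\sum_{n=-n_0+1}^{0}x_n x_n\tp$ is positive definite and $-\nabla^{2}\Phi_{0,n-1}$ is uniformly positive definite from the start; in particular each leader $\alpha_n^{*}$ stays interior to $P^d_{n_0,\epsilon_0}$. Differentiating $\nabla\Phi_{0,n}(\alpha_n^{*})=0$ and applying the mean value theorem I would write $\alpha_n^{*}-\alpha_{n-1}^{*}=\bar H_{n-1}^{-1}\nabla f_n(\alpha_n^{*})$ with $\bar H_{n-1}\succeq C_u^{-2}\bigl(V^{(0)}+V_{n-1}\bigr)$; then concavity of $f_n$, a Cauchy--Schwarz step in the $\bar H_{n-1}^{-1}$--metric, and $\Vert\nabla f_n\Vert\le\Vert x_n\Vert/\epsilon_0$ give $R_N\le (C_u^{2}/\epsilon_0^{2})\sum_{n=1}^{N}x_n\tp\bigl(V^{(0)}+V_{n-1}\bigr)^{-1}x_n$. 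Scaling the training data so that $V^{(0)}\succeq\bar\delta^{2}I$ makes every summand at most $1$, and then the elliptical--potential (log--determinant) lemma together with $\det\bigl(V^{(0)}+V_N\bigr)\le\bigl(\tr(V^{(0)}+V_N)/d\bigr)^{d}$ yields $R_N\le c_3\log\bigl(\det(V^{(0)}+V_N)/\det V^{(0)}\bigr)\le c_4\,d\log^{+}\tr V_N+c_5$. Finally, on $E^{c}$ I pick $N_k\uparrow\infty$ with $\Vert s_{N_k}\Vert^{2}/\max(1,\tr V_{N_k}\log\tr V_{N_k})\to\infty$, which forces $\Vert s_{N_k}\Vert\to\infty$; combining (a) and (b), when $\Vert s_{N_k}\Vert>\tr V_{N_k}$ the linear term $c_1\Vert s_{N_k}\Vert$ beats $c_4 d\log^{+}\Vert s_{N_k}\Vert$, and otherwise $\tr V_{N_k}\to\infty$ so that $\Vert s_{N_k}\Vert^{2}/\tr V_{N_k}=\bigl(\Vert s_{N_k}\Vert^{2}/(\tr V_{N_k}\log\tr V_{N_k})\bigr)\log\tr V_{N_k}$ outgrows $c_4 d\log\tr V_{N_k}$; either way $\log\cK_{1,N_k}^{*}\to\infty$, so Skeptic weakly forces $E$.

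The hard part is step (b): pushing the regret $R_N$ down to the logarithmic rate $O(\log\tr V_N)$ — a crude strong--convexity bound only gives $O(\tr V_N)$, and a dimension/round bound gives $O(\log N)$ — which requires the log--determinant potential estimate and a careful use of (\ref{eq:2-0}) to keep all denominators $1+\alpha_n^{*}\cdot x_k$ trapped in a fixed interval $[\epsilon_0,C_u]$. Everything else is elementary convexity and a case split on the size of $\tr V_N$ relative to $\Vert s_N\Vert$.
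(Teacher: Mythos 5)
Your proposal is correct, and its skeleton coincides with the paper's: the same decomposition $\log\cK_{1,N}^*=\bigl(\Phi_{0,N}(\alpha_N^*)-\Phi_{0,0}(\alpha_0^*)\bigr)-\sum_{n=1}^N\Delta\Phi_n$ (the paper's (\ref{eq:2-5})), the same reduction of the regret to $\sum_n\log\bigl(1+O(1)\,x_n\tp V_{0,n-1}^{-1}x_n\bigr)$ via a mean-value/implicit-differentiation identity for $\Delta\alpha_n^*$ and the uniform trapping of all denominators $1+\alpha\cdot x$ in $[\epsilon_0,C_1]$, and the same determinant telescoping plus AM--GM to reach $O(d\log^+\tr V_N)$ (Lemmas \ref{lem:diff1}--\ref{lem:diff2}; the paper uses the exact identity $1+a_n=|V_{0,n}|/|V_{0,n-1}|$ with $\log(1+C_2a_n)\le C_2\log(1+a_n)$ rather than normalizing $V^{(0)}$ to make $a_n\le 1$, but these are interchangeable). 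Where you genuinely diverge is the lower bound on the hindsight term. The paper evaluates $\Phi_{0,N}(\alpha_N^*)$ exactly through the empirical risk-neutral distribution, Taylor-expands $(1+t)\log(1+t)$ to get $\Phi_{0,N}(\alpha_N^*)\ge\frac{1}{2C_1}s_{0,N}\tp V_{0,N}^{*-1}s_{0,N}$, and then must fight the fact that the training-data denominators $1+\alpha_N^*\cdot x_n$, $n\le 0$, can approach zero --- this forces the auxiliary event $E_1$, a separate weak-forcing argument for $E_1$, and a case split on $\lim_N\tr V_N$ being finite or infinite. You instead lower-bound the maximum by the suboptimal comparison bet $\alpha=\lambda s_N/\Vert s_N\Vert$ with $\lambda\le\lambda_0$ small, using only $\log(1+u)\ge u-u^2$ and $s_N\tp V_Ns_N\le\tr V_N\,\Vert s_N\Vert^2$; because $\alpha$ stays in a small ball, the training-data contribution is a harmless constant and the entire $E_1$/case-split machinery disappears, while the resulting $c_1\min\bigl(\Vert s_N\Vert^2/\tr V_N,\Vert s_N\Vert\bigr)$ still covers both the $\tr V_N\to\infty$ and the bounded-$\tr V_N$ regimes in one stroke. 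The trade-off is precision: the paper's exact risk-neutral representation is what later yields the sharp constant $1$ in Theorem \ref{thm:gen-slln2} and the KL/information-criterion interpretation of the capital, neither of which your comparison argument can recover; but for the order statement of Theorem \ref{thm:gen-slln} your route is sufficient and appreciably cleaner.
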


The maximum in the denominator is needed only for the case that
$\sup_N \tr V_N \le 1$, such as $0\in D$ and 
Reality always chooses  $x_n\equiv 0$.
It is important to emphasize that $E$ in (\ref{eq:gen-slln}) is weakly forced
irrespective of the rate of growth of $\tr V_N$, including the zero-growth case, i.e.\ 
the case that $\tr V_N$ converges to a finite value.
A measure-theoretic interpretation of our result shows the flexibility of
our result. When $x_n$'s are measure-theoretic martingale differences, then the capital process
under SOS is a non-negative measure-theoretic martingale, which converges to a finite
value almost surely.  Therefore as in Chapter 8 of \cite{shafer/vovk:2001} we have 
the following proposition.  We use the same notation as above.

\begin{proposition}
\label{prop:measure}
Let $s_n = x_1 + \dots + x_n$ be a 
$d$-dimensional measure-theoretic  martingale w.r.t.\ 
a filtration $\{{\cal F}_n\}$. 
Assume that 
the differences $x_n\in D$ are uniformly bounded a.e.. 
Then with probability one the sequence 
$\Vert s_n\Vert /\sqrt{\max(1,\tr V_n \log (\tr V_n))}$, $n=1,2,\dots$, is bounded.
\end{proposition}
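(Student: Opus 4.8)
The plan is to deduce the proposition from Theorem~\ref{thm:gen-slln} through the standard passage from game-theoretic to measure-theoretic statements (Chapter~8 of \cite{shafer/vovk:2001}), using the observation that, under the martingale hypothesis, the capital process of SOS is a genuine non-negative measure-theoretic martingale. Fix a choice of training data $\{x_{-n_0+1},\dots,x_0\}$ satisfying (\ref{eq:2-0}) and let $\cP$ denote the resulting sequential optimizing strategy, so that $\cK^{\cP}_{1,N}=\prod_{n=1}^N(1+\alpha_{n-1}^*\cdot x_n)$, where $\alpha_{n-1}^*$ is the unique maximizer of $\Phi_{0,n-1}$. The first point to record is that $\alpha_{n-1}^*$ is a deterministic Borel---indeed continuous---function of $(x_1,\dots,x_{n-1})$, being the arg max of a strictly concave smooth function of the data; hence $\alpha_{n-1}^*$ is $\mathcal F_{n-1}$-measurable. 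Moreover, since $x_n\in D$ a.e.\ and $\alpha_{n-1}^*$ lies in the interior of $P^d_{n_0,\epsilon_0}$, the implication (\ref{eq:2-0}) yields $1+\alpha_{n-1}^*\cdot x_n\ge\epsilon_0>0$ a.e., so that $\cK^{\cP}_{1,N}>0$ a.e.\ and $\cP$ is prudent.

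Next I would verify the martingale property. Because $\alpha_{n-1}^*$ is bounded and $\mathcal F_{n-1}$-measurable and the $x_n$ are martingale differences, $E[\cK^{\cP}_{1,n}\mid\mathcal F_{n-1}]=\cK^{\cP}_{1,n-1}\bigl(1+\alpha_{n-1}^*\cdot E[x_n\mid\mathcal F_{n-1}]\bigr)=\cK^{\cP}_{1,n-1}$, so $(\cK^{\cP}_{1,N})_{N\ge 0}$ is a non-negative martingale with $E[\cK^{\cP}_{1,N}]=\cK^{\cP}_{1,0}=1$. By Doob's convergence theorem for non-negative (super)martingales it converges almost surely to a finite limit; in particular $\limsup_N\cK^{\cP}_{1,N}<\infty$ almost surely.

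Finally I would feed this into Theorem~\ref{thm:gen-slln}: that theorem asserts that $\cP$ weakly forces the event $E$ of (\ref{eq:gen-slln}), i.e.\ $\limsup_N\cK^{\cP}_{1,N}=\infty$ whenever $\xi\notin E$, which is the set inclusion $\{\xi\in\Omega:\limsup_N\cK^{\cP}_{1,N}<\infty\}\subseteq E$. Combined with the previous paragraph this gives $P(E)=1$. On $E$ the quantity $\limsup_n\Vert s_n\Vert/\sqrt{\max(1,\tr V_n\log(\tr V_n))}$ is finite, and almost surely every individual term of this sequence is finite as well (the numerator is finite because $x_n\in D$ is compact, and the denominator is $\ge 1$), so almost surely the whole sequence is bounded, which is the assertion. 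The step that I expect to require the most care is not the martingale-convergence argument itself but the verification that SOS really is a legitimate measure-theoretic strategy: the measurability of $\xi^{n-1}\mapsto\alpha_{n-1}^*$, the integrability underlying the conditional-expectation identity, and the handling of the a.e.\ qualifier in $x_n\in D$, which may force modifications on null sets without affecting any almost-sure conclusion.
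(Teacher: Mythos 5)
Your proposal is correct and follows exactly the route the paper intends: it sketches the same argument in one sentence (``the capital process under SOS is a non-negative measure-theoretic martingale, which converges to a finite value almost surely. Therefore as in Chapter 8 of Shafer and Vovk\ldots''), relying on Theorem~\ref{thm:gen-slln} plus Doob's convergence theorem. Your write-up simply supplies the measurability, prudence, and conditional-expectation details that the paper leaves implicit.
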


Let $\lmaxN$ and $\lminN$  denote the maximum and the minimum eigenvalues of $V_N$.
Consider the event
\begin{equation}
\label{eq:min-log-max}
E' : \lim_N \frac{\log \lmaxN}{\lminN} = 0.
\end{equation}
Theorem \ref{thm:gen-slln} gives only the order of $s_N$. 
If we condition the paths on
the event $E'$, then we can derive a more accurate numerical bound as follows.

\begin{theorem}
\label{thm:gen-slln2}
By the sequential optimizing strategy Skeptic can
weakly force
\[
E'  \ \Rightarrow \ \limsup_N \frac{s_N\tp V_N^{-1} s_N}{\log |V_N|} \le 1.
\]
\end{theorem}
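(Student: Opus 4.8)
The plan is to read the bound off the asymptotics of the SOS capital process, decomposed into a hindsight term minus a regret term. Telescoping $\log\cK_{1,N}^{*}=\sum_{n=1}^{N}\bigl(\Phi_{0,n}(\alpha_{n-1}^{*})-\Phi_{0,n-1}(\alpha_{n-1}^{*})\bigr)$ and using $\Phi_{0,n}-\Phi_{0,n-1}=\log(1+\alpha\cdot x_{n})$ yields the exact identity
\[
\log\cK_{1,N}^{*}=\Phi_{0,N}(\alpha_{N}^{*})-\Phi_{0,0}(\alpha_{0}^{*})-R_{N},\qquad
R_{N}:=\sum_{n=1}^{N}\bigl(\Phi_{0,n}(\alpha_{n}^{*})-\Phi_{0,n}(\alpha_{n-1}^{*})\bigr)\ \ge 0,
\]
the nonnegativity holding because $\alpha_{n}^{*}$ maximizes $\Phi_{0,n}$, and $\Phi_{0,0}(\alpha_{0}^{*})$ being a fixed constant. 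I would then prove, for paths in $E'$, the two halves $R_{N}\le\tfrac12\log|V_{N}|+o(\log|V_{N}|)$ (the regret bound) and $\Phi_{0,N}(\alpha_{N}^{*})\ge\tfrac12\,s_{N}\tp V_{N}^{-1}s_{N}-o(\log|V_{N}|)-O(1)$ (the hindsight bound). Combining these gives $\log\cK_{1,N}^{*}\ge\tfrac12\bigl(s_{N}\tp V_{N}^{-1}s_{N}-\log|V_{N}|\bigr)-o(\log|V_{N}|)$, so that if $\xi\in E'$ but $\limsup_{N}s_{N}\tp V_{N}^{-1}s_{N}/\log|V_{N}|>1$, picking $\delta>0$ and $N_{k}\uparrow\infty$ with $s_{N_{k}}\tp V_{N_{k}}^{-1}s_{N_{k}}\ge(1+\delta)\log|V_{N_{k}}|$ forces $\log\cK_{1,N_{k}}^{*}\ge\tfrac{\delta}{2}\log|V_{N_{k}}|-o(\log|V_{N_{k}}|)\to\infty$, i.e.\ $\limsup_{N}\cK_{1,N}^{*}=\infty$ — which is exactly weak forcing of the stated implication (the subcase $\log|V_{N}|\not\to\infty$ being degenerate).

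For the regret bound I would Taylor-expand the strictly concave $\Phi_{0,n}$ at its maximizer, $\Phi_{0,n}(\alpha_{n}^{*})-\Phi_{0,n}(\alpha_{n-1}^{*})=\tfrac12(\alpha_{n-1}^{*}-\alpha_{n}^{*})\tp\bigl(-\nabla^{2}\Phi_{0,n}(\tilde\alpha_{n})\bigr)(\alpha_{n-1}^{*}-\alpha_{n}^{*})$, and compare the first-order conditions $\nabla\Phi_{0,n}(\alpha_{n}^{*})=\nabla\Phi_{0,n-1}(\alpha_{n-1}^{*})=0$ to get $\alpha_{n}^{*}-\alpha_{n-1}^{*}=J_{n-1}^{-1}x_{n}/(1+\alpha_{n-1}^{*}\cdot x_{n})+(\text{higher order})$, where $J_{n-1}=-\nabla^{2}\Phi_{0,n-1}(\alpha_{n-1}^{*})=\sum_{m\le n-1}x_{m}x_{m}\tp/(1+\alpha_{n-1}^{*}\cdot x_{m})^{2}$. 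On $E'$ the minimum eigenvalue of $V_{n}$ is nondecreasing and tends to infinity, which forces $\|\alpha_{n}^{*}-\alpha_{n-1}^{*}\|\to0$ and, via the first-order condition together with Theorem~\ref{thm:gen-slln}, $\alpha_{n}^{*}\to0$ along the range that matters for the sharp constant (see the last paragraph), so that $J_{n-1}=(1+o(1))V_{n-1}$ entrywise and $1+\alpha_{n-1}^{*}\cdot x_{n}=1+o(1)$. Substituting, $R_{N}=\tfrac12(1+o(1))\sum_{n=1}^{N}x_{n}\tp V_{n-1}^{-1}x_{n}$, and since each summand tends to $0$ it may be traded for $\log(1+x_{n}\tp V_{n-1}^{-1}x_{n})$ at a cost $1+o(1)$; the matrix--determinant lemma then telescopes $\sum_{n=1}^{N}\log(1+x_{n}\tp V_{n-1}^{-1}x_{n})=\log|V_{N}|-\log|V_{0}|$, giving $R_{N}\le\tfrac12\log|V_{N}|+o(\log|V_{N}|)$.

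For the hindsight bound I would use the quadratic lower bound $\log(1+u)\ge u-\tfrac12u^{2}(1+o(1))$, valid uniformly as $u\to0$: on $E'$ (again with Theorem~\ref{thm:gen-slln}) the point $V_{N}^{-1}s_{N}$ is, for large $N$, an admissible constant proportion and the training-data corrections to $s_{N}$ and $V_{N}$ are $O(1)$, so $\Phi_{0,N}(\alpha_{N}^{*})\ge\Phi_{0,N}(V_{N}^{-1}s_{N})\ge\tfrac12(1-o(1))\,s_{N}\tp V_{N}^{-1}s_{N}-O(1)$; in the complementary regime where $V_{N}^{-1}s_{N}$ fails to be admissible one checks directly, scaling it back into $P^{d}_{n_{0},\epsilon_{0}}$ and using $\lambda_{\min}(V_{N})\gg\log|V_{N}|$ (precisely $E'$) together with $\|s_{N}\|\ge\lambda_{\min}(V_{N})\|V_{N}^{-1}s_{N}\|$, that $\Phi_{0,N}(\alpha_{N}^{*})/\log|V_{N}|\to\infty$ there, so $\log\cK_{1,N}^{*}\to\infty$ along such a subsequence irrespective of the precise regret constant.

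The step I expect to be the main obstacle is the regret bound with the \emph{sharp} constant $\tfrac12$: keeping the coefficient $\tfrac12$ (rather than the dimension-dependent constant one would get from a crude Hessian bound) is what forces the argument through the telescoping determinant identity, and the honest uniform control, over the $N$ rounds, of the accumulated drift $\alpha_{n}^{*}-\alpha_{n-1}^{*}$, of the entrywise comparison of the weighted Gram matrix $J_{n}$ with $V_{n}$, and of the passage from $x_{n}\tp V_{n-1}^{-1}x_{n}$ to $\log(1+x_{n}\tp V_{n-1}^{-1}x_{n})$ is exactly where the precise hypothesis $E'$ — $\lambda_{\min}(V_{N})$ outgrowing $\log\lambda_{\max}(V_{N})$, which in turn pins $\alpha_{N}^{*}$ near $0$ whenever $s_{N}\tp V_{N}^{-1}s_{N}/\log|V_{N}|$ stays bounded — must be used quantitatively; the matching lower bound on $\Phi_{0,N}(\alpha_{N}^{*})$ is of the same analytic flavour but more routine.
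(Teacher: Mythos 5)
Your proposal is correct and follows essentially the same route as the paper's own (brief) proof: the same decomposition of $\log\cK_{1,N}^*$ into the hindsight term $\Phi_{0,N}(\alpha_N^*)$ minus the accumulated regret $\sum_n \Delta\Phi_n$, the same key step of using $E'$ (via $\lminN\to\infty$ dominating $\log\lmaxN$) to force $\alpha_N^*\to 0$, and the same quadratic/determinant-telescoping approximations $\Phi_{0,N}(\alpha_N^*)\sim\tfrac12 s_N\tp V_N^{-1}s_N$ and $\sum_n\Delta\Phi_n\sim\tfrac12\log|V_N|$. If anything, your write-up is more explicit about the uniformity issues that the paper's ``$\sim$'' notation glosses over; the only cosmetic differences are that the paper forces $\alpha_N^*\to 0$ directly from the lower bound $\Phi_{0,N}(\alpha_N^*)\gtrsim\Vert\alpha_N^*\Vert^2\lminN$ rather than from boundedness of the ratio, and evaluates the hindsight term at the maximizer via the exact identity (\ref{eq:empirical-bound-10-2}) rather than at the test point $V_N^{-1}s_N$.
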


This theorem follows from the fact that
on  $E'$ Skeptic can weakly force $\alpha_N^* \rightarrow 0$, as shown in the
proof of this theorem in Section 
\ref{subsec:approximation}.

Note that $\lminN\rightarrow\infty$ on $E'$.
Note also that $E$ in (\ref{eq:gen-slln}) holds if and only if 
$\limsup_N \Vert s_N\Vert/\allowbreak\sqrt{\max(1,\lmaxN \log\lmaxN)} < \infty$, 
because $\lmaxN \le \tr V_N \le d \lmaxN$. 
Hence on $E'$ 
we have
\begin{align*}
1 &\ge \limsup_N \frac{s_N\tp V_N^{-1} s_N}{\log |V_N|}  \ge \limsup_N
\frac{\Vert s_N\Vert^2}{d \lmaxN \log \lmaxN}.
\end{align*}
Therefore, although we only have a conditional statement in
Theorem \ref{thm:gen-slln2}, it gives a more accurate numerical bound 
than Theorem 
\ref{thm:gen-slln}.

\section{Proof of the theorem and some other results on sequential optimizing strategy}
\label{sec:structure}

In this section we provide proofs of the above theorems  and present
other results on the sequential optimizing strategy. 
For readability, we divide the section into several subsections.

\subsection{Properties of $\alpha_N^*$ and the empirical risk neutral distribution}
Let $\delta_x$ denote a unit point mass at $x\in {\mathbb R}^d$ and let
$g_N = \sum_{n=-n_0+1}^N \delta_{x_n}/(N+n_0)$ denote the empirical distribution of the training data and Reality's moves 
$x_1,\dots,x_N$ up to round $N$. 
In view of (\ref{eq:2-1}) we define the {\em empirical risk neutral distribution}
$g_N^*$ up to round $N$ by
\[
g_N^* = \frac{1}{N+n_0}\sum_{n=-n_0+1}^N \frac{\delta_{x_n}}
{1 + \alpha_N^*\cdot x_n}.
\]
For notational simplicity we omit `0' from the subscript of $g_N$ and $g_N^*$, although
they involve the training data.
$g_N^*$ is indeed a probability measure, because by (\ref{eq:2-1}) we have
\[
\sum_{x_n} g_N(\{x_n\}) = \frac{1}{N+n_0} \sum_{n=-n_0+1}^N \frac{1}{1+\alpha_N^* \cdot x_n}
=\frac{1}{N+n_0} \sum_{n=-n_0+1}^N 
\frac{1+\alpha_N^* \cdot x_n}{1+\alpha_N^* \cdot x_n}=1,
\]
where the summation on the left-hand side is over distinct values of $x_n$, 
$n=-n_0+1,\dots, N$. 
By $E_{g_N^*}[\cdot]$ we denote the expected value under $g_N^*$.  Then
(\ref{eq:2-1}) is written as  $E_{g_N^*}[x]=0$.

The log capital $\log {\bar {\cal K}}_{0,N}^*=\Phi_{0,N}(\alpha_N^*)$ of the 
constant hindsight strategy $\alpha_N^*$ up to round $N$ including the training data
is expressed as
\begin{align}
\label{eq:2-3}
\log {\bar {\cal K}}_{0,N}^* = \Phi_{0,N}(\alpha_N^*) = 
(N+n_0)\sum_{x_n} g_N(\{x_n\})\log \frac{g_N(\{x_n\})}{g_N^*(\{x_n\})} = 
(N+n_0) D(g_N \| g_N^*),
\end{align}
where 
$D(g_N \| g_N^*)$ denotes 
the Kullback-Leibler divergence between two probability distributions
$g_N$ and $g_N^*$.

Now note that $\log (1+\alpha_{n-1}^* \cdot x_n)  = \Phi_n(\alpha_{n-1}^*)-\Phi_{n-1}(\alpha_{n-1}^*)$.
By summation by parts, the difference 
$\log {\bar {\cal K}}_{0,N}^* - \log {\cal K}_{1,N}^*$ between
the hindsight strategy and SOS can be expressed as
\begin{align}
\label{eq:2-5}
\log {\bar {\cal K}}_{0,N}^* - \log {\cal K}_{1,N}^* = 
\sum_{n=1}^N \Delta \Phi_n + \Phi_{0,0}(\alpha_0^*),
\end{align}
where $\Delta \Phi_n = \Phi_n(\alpha_n^*) - \Phi_n(\alpha_{n-1}^*) \ge 0$
and $\Phi_{0,0}(\alpha_0^*)$ is a constant depending only on the
training data.
We will analyze the behavior the log capital $\log {\cal K}_{1,N}^*$  of SOS 
by analyzing
$\log {\bar {\cal K}}_{0,N}^*$ and 
$\sum_{n=0}^N \Delta \Phi_n$.  

We call
\[
{\bar V}_N^* = \frac{1}{N+n_0} V_{0,N}^*= E_{g_N^*}[x x\tp] 
= \frac{1}{N+n_0} \sum_{n=-n_0+1}^N \frac{x_n x_n\tp}{1+\alpha_N^* \cdot x_n}
\]
the empirical risk neutral covariance matrix for Reality's moves up to round $N$.
Write 
\[
s_{0,N}=\sum_{n=-n_0+1}^N x_n, \quad \bar x_{0,N}=\frac{1}{N+n_0} s_{0,N}.
\]
Noting 
$g_N(\{x_n\}) = (1 + \alpha_N^*\cdot x_n)g_N^*(\{x_n\})$, we have
\begin{align*}
\bar x_{0,N} = E_{g_N}[x] = E_{g_N^*}[(1 + \alpha_N^*\cdot x)x] =
E_{g_N^*}[x] +  E_{g_N^*}[xx\tp ] \alpha_N^* = E_{g_N^*}[xx\tp ] \alpha_N^*.
\end{align*}
Therefore $\alpha_N^*$ is expressed as
\begin{align}
\label{eq:2-8b}
\alpha_N^* 
= {\bar V}_N^{*-1}{\bar x}_{0,N} = 
V_{0,N}^{*-1} s_{0,N}.
\end{align}
Since ${\bar V}_N^*$ itself contains $\alpha_N^*$, (\ref{eq:2-8b})
does not give an explicit expression of $\alpha_N^*$.  However it is 
a very useful exact relation for our analysis.

We now consider $\Delta \alpha_n^* = \alpha_n^* - \alpha_{n-1}^*$.  In the following we use
the notation
\[
x_n(\alpha)= \frac{x_n}{1+\alpha\cdot x_n}.
\]
Taking the difference of the following two equalities 
\[ 
0 = \sum_{i=-n_0+1}^n x_i(\alpha_n^*), \qquad 
0 = \sum_{i=-n_0+1}^{n-1} x_i(\alpha_{n-1}^*)
\]
we obtain
\[
0= \sum_{i=-n_0+1}^{n-1} x_i \frac{(\alpha_{n-1}^* - \alpha_n^*)\cdot x_i}
      {(1+\alpha_{n-1}^*\cdot x_i)(1+\alpha_n^*\cdot x_i)}
 + x_n(\alpha_n^*).
\]
Therefore
\[
\Big(\sum_{i=-n_0+1}^{n-1}  \frac{x_i x_i\tp}
     {(1+\alpha_{n-1}^*\cdot x_i)(1+\alpha_n^*\cdot x_i)}\Big)
(\alpha_n^* - \alpha_{n-1}^*) 
= x_n(\alpha_n^*).
\]
Note that the denominator on the left-hand side  is a scalar and the
matrix on the left-hand side  is positive definite.
Then 
\begin{equation}
\label{eq:delta-alpha}
\Delta\alpha_n^* = V_{0,n-1}(\alpha_{n-1}^*, \alpha_n^*)^{-1} x_n(\alpha_n^*),
\end{equation}
where 
$
V_{0,n-1}(\alpha,\beta)=\sum_{i=-n_0+1}^{n-1}  x_i(\alpha) x_i(\beta)\tp
$.

Concerning the behavior of $\Delta\alpha_n^*$ we state the following 
lemma, which will be used in Section \ref{subsec:approximation}.


\begin{lemma}
\label{lem:convergence2}
$\lim_n \Delta\alpha_n^* =0$
for every $\xi\in D^\infty$.
\end{lemma}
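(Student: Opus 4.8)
The plan is to bound $\Vert\Delta\alpha_n^*\Vert^2$ by a constant times $x_n\tp V_{0,n-1}^{-1}x_n$, where $V_{0,n-1}:=\sum_{i=-n_0+1}^{n-1}x_ix_i\tp$, and then to show that this tends to zero along every path. By (\ref{eq:2-0}) the training vectors span ${\mathbb R}^d$, so $V_{0,n-1}\succeq V_{0,0}\succ 0$. Since $P^d_{n_0,\epsilon_0}\subset-(1-\epsilon_0)(\co D)^\perp$ is bounded, there is a constant $C<\infty$ with $0<1+\alpha\cdot x_i\le C$ for every $\alpha\in P^d_{n_0,\epsilon_0}$ and every $i\ge -n_0+1$, while $1+\alpha_n^*\cdot x_n\ge\epsilon_0$ by (\ref{eq:2-0}). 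Hence the matrix $A_n:=V_{0,n-1}(\alpha_{n-1}^*,\alpha_n^*)$ of (\ref{eq:delta-alpha}) satisfies $A_n\succeq C^{-2}V_{0,n-1}$, so $A_n^{-1}\preceq C^2V_{0,n-1}^{-1}$ and $\lambda_{\min}(A_n)\ge C^{-2}\lambda_{\min}(V_{0,0})$. Writing (\ref{eq:delta-alpha}) as $\Delta\alpha_n^*=A_n^{-1}x_n/(1+\alpha_n^*\cdot x_n)$ and using $A_n^{-2}\preceq\Vert A_n^{-1}\Vert\,A_n^{-1}$, I obtain
\[
\Vert\Delta\alpha_n^*\Vert^2\ \le\ \frac{1}{\epsilon_0^2\,\lambda_{\min}(A_n)}\,x_n\tp A_n^{-1}x_n\ \le\ \frac{C^4}{\epsilon_0^2\,\lambda_{\min}(V_{0,0})}\,x_n\tp V_{0,n-1}^{-1}x_n .
\]

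Next I would reduce $x_n\tp V_{0,n-1}^{-1}x_n$ to a summable quantity. By Cauchy--Schwarz, $x_n\tp V_{0,n-1}^{-1}x_n\le\Vert x_n\Vert\cdot\Vert V_{0,n-1}^{-1}x_n\Vert\le\bar\delta\,(x_n\tp V_{0,n-1}^{-2}x_n)^{1/2}$, so it suffices that $x_n\tp V_{0,n-1}^{-2}x_n\to 0$, and in fact I claim $\sum_n x_n\tp V_{0,n-1}^{-2}x_n<\infty$. Applying the Sherman--Morrison identity to $V_{0,n}=V_{0,n-1}+x_nx_n\tp$ gives $V_{0,n-1}^{-1}-V_{0,n}^{-1}=(V_{0,n-1}^{-1}x_n)(V_{0,n-1}^{-1}x_n)\tp/(1+x_n\tp V_{0,n-1}^{-1}x_n)$, so that, taking traces,
\[
\tr V_{0,n-1}^{-1}-\tr V_{0,n}^{-1}=\frac{x_n\tp V_{0,n-1}^{-2}x_n}{1+x_n\tp V_{0,n-1}^{-1}x_n}\ \ge\ \frac{x_n\tp V_{0,n-1}^{-2}x_n}{1+\bar\delta^2/\lambda_{\min}(V_{0,0})},
\]
where I used $x_n\tp V_{0,n-1}^{-1}x_n\le\Vert x_n\Vert^2/\lambda_{\min}(V_{0,0})\le\bar\delta^2/\lambda_{\min}(V_{0,0})$. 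Telescoping over $n=1,\dots,N$ and discarding $\tr V_{0,N}^{-1}\ge 0$ yields $\sum_{n\ge1}x_n\tp V_{0,n-1}^{-2}x_n\le(1+\bar\delta^2/\lambda_{\min}(V_{0,0}))\,\tr V_{0,0}^{-1}<\infty$. In particular $x_n\tp V_{0,n-1}^{-2}x_n\to0$, hence $x_n\tp V_{0,n-1}^{-1}x_n\to0$, hence $\Vert\Delta\alpha_n^*\Vert\to0$, for every $\xi\in D^\infty$.

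The only delicate step is the passage from (\ref{eq:delta-alpha}) to the $V_{0,n-1}^{-1}$ bound: it rests on $A_n\succeq C^{-2}V_{0,n-1}$, i.e.\ on the uniform bound $1+\alpha\cdot x_i\le C$ over $P^d_{n_0,\epsilon_0}$, which is exactly the content of the collateral condition (\ref{eq:2-0}); the remaining manipulations are elementary linear algebra. I would stress that the argument uses no hypothesis on the growth of $\tr V_N$: even when $\tr V_N$ stays bounded, so that $V_{0,n-1}^{-1}$ does not tend to $0$, the telescoping of $\tr V_{0,n}^{-1}$ still forces $x_n\tp V_{0,n-1}^{-2}x_n$, and hence $\Delta\alpha_n^*$, to zero.
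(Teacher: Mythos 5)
Your proof is correct, and while the first half (the reduction of $\Vert\Delta\alpha_n^*\Vert^2$ to a constant multiple of $x_n\tp V_{0,n-1}^{-1}x_n$ via the uniform bounds $\epsilon_0 \le 1+\alpha\cdot x\le C$ and $V_{0,n-1}(\alpha_{n-1}^*,\alpha_n^*)\ge C^{-2}V_{0,n-1}$) is essentially the same as the paper's, the key convergence step is handled by a genuinely different argument. The paper proves $x_n\tp V_{0,n-1}^{-1}x_n\to 0$ qualitatively: it first establishes (Lemma A.1) that $y_n=(\sum_{i<n}u_iu_i\tp)^{-1}u_n\to 0$ by a compactness/subsequence contradiction argument, and then passes to the quadratic form via $\Vert(\sum_{i<n}u_iu_i\tp)^{-1/2}u_n\Vert^2=u_n\tp y_n$. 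You instead apply the Sherman--Morrison identity to $V_{0,n}=V_{0,n-1}+x_nx_n\tp$ and telescope $\tr V_{0,n}^{-1}$, obtaining the stronger quantitative conclusion $\sum_n x_n\tp V_{0,n-1}^{-2}x_n\le (1+\bar\delta^2/\lambda_{\min}(V_{0,0}))\tr V_{0,0}^{-1}<\infty$, from which $x_n\tp V_{0,n-1}^{-1}x_n\to 0$ follows by Cauchy--Schwarz and the boundedness of $D$; the uniform denominator bound $x_n\tp V_{0,n-1}^{-1}x_n\le\bar\delta^2/\lambda_{\min}(V_{0,0})$ needed for the telescoping is supplied by the positive definiteness of $V_{0,0}$, which the training data guarantee. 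Your route buys an explicit, path-independent summability bound (and hence a rate in Ces\`aro mean) rather than mere convergence, at the cost of invoking the rank-one update identity; the paper's route is more elementary in its ingredients but non-constructive. Both arguments, as you correctly stress, require no assumption on the growth of $\tr V_N$.
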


We give a proof of this lemma in Appendix.


\subsection{Bounding the difference between the hindsight strategy and SOS from above}
We now give a detailed analysis of 
$\sum_{n=1}^N \Delta \Phi_n$ on the right-hand side  of
(\ref{eq:2-5}) and bound it from above.
We note the following simple fact on $\Delta \Phi_n$:
\begin{align}
\Delta \Phi_n&=\Phi_n(\alpha_n^*)-\Phi_n(\alpha_{n-1}^*) = 
\Phi_{n-1}(\alpha_n^*)-\Phi_{n-1}(\alpha_{n-1}^*)  
+ \log \frac{1+\alpha_n^*\cdot x_n}{1+\alpha_{n-1}^*\cdot x_n}\nonumber \\
&\le 
\log \frac{1+\alpha_n^*\cdot x_n}{1+\alpha_{n-1}^*\cdot x_n}
= \log \big( 1 + \frac{\Delta\alpha_n^* \cdot x_n}{1+\alpha_{n-1}^*\cdot x_n}\big),
\label{eq:delta-phi-upper-0}
\end{align}
where the inequality holds 
since $\alpha_{n-1}^*$ maximizes $\Phi_{n-1}(\alpha)$.  Substituting 
(\ref{eq:delta-alpha}) into the right-hand side we obtain
\begin{equation}
\Delta \Phi_n \le 
 \log \big( 1 + x_n(\alpha_n^*)\tp 
V_{0,n-1}(\alpha_{n-1}^*, \alpha_n^*)^{-1} x_n(\alpha_{n-1}^*)\big).
\label{eq:delta-phi-upper-1}
\end{equation}
Note that  we can also rewrite
\begin{align}
1 + x_n(\alpha_n^*)\tp 
V_{0,n-1}(\alpha_{n-1}^*, \alpha_n^*)^{-1} x_n(\alpha_{n-1}^*)
&= \frac{|V_{0,n}(\alpha_{n-1}^*, \alpha_n^*)|}{|V_{0,n-1}(\alpha_{n-1}^*, \alpha_n^*)|},
\label{eq:det-form2}
\end{align}
where 
we used 
a well-known relation between determinants (e.g.\ Corollary A.3.1 of \cite{anderson-3rd}).
%


Let 
\begin{equation}
\label{eq:C1}
C_1 = \max\big(\sup_{\alpha\in -(\co D)^\perp,  \; x\in D} (1+\alpha\cdot x),
\sup_{-n_0+1 \le n\le 0\atop\alpha\in P_{n_0,\epsilon_0}^d} (1+\alpha\cdot x_n)
\big),
\end{equation}
which is a constant depending only on the training data.
The first argument $C_{1,0}=\sup_{\alpha\in -(\co D)^\perp,  \; x\in D} (1+\alpha\cdot x)$
on the right-hand side of (\ref{eq:C1})
corresponds to the maximum one-step growth rate of Skeptic's capital under the
collateral duty and
$C_{1,0}$ equals 2 if $D$ is symmetric w.r.t.\ the origin.
$C_{1,0}$ may be large if  $D$ is highly asymmetric w.r.t.\ the origin.
For example, for $d=1$ and $D=[-0.1,1]$, 
we have $C_{1,0}=11$.

For two symmetric matrices $A,B$, let $A\ge B$ mean that
$A-B$ is non-negative definite.   Then
\[
V_{0,n-1}(\alpha_{n-1}^*, \alpha_n^*) \ge  \frac{1}{C_1^2} V_{0,n-1}, 
\]
where $V_{0,n-1}=V_{0,n-1}(0,0)=\sum_{i=-n_0+1}^n x_i x_i\tp$.
Note that $V_{0,n-1}$ is positive definite because of the training data, 
although $V_{n-1}$ in (\ref{eq:snvn}) may be singular.
Note also that 
$
1 + \alpha_m^*\cdot x_n \ge \epsilon_0
$ for $m,n\ge 1$.  Therefore
\[
x_n(\alpha_n^*)\tp 
V_{0,n-1}(\alpha_{n-1}^*, \alpha_n^*)^{-1} x_n(\alpha_{n-1}^*)
\le C_2  x_n\tp  V_{0,n-1}^{-1} x_n, \qquad C_2 = \frac{C_1^2}{\epsilon_0^2}.
\]
%
Hence we can bound 
\[
\sum_{n=1}^N \Delta \Phi_n \le 
\sum_{n=1}^N 
\log (1 + C_2 x_n\tp  V_{0,n-1}^{-1} x_n).
\]
Write $a_n=x_n\tp V_{0,n-1}^{-1} x_n \ge 0$. 
Note that $1+a_n=|V_{0,n}|/|V_{0,n-1}|$.
Also for $c\ge 1$ and $a\ge 0$ we have
$1+ac \le (1+a)^c$ and hence
\[
\log (1+ac) \le c \log (1+a).
\]
Therefore
\[
\sum_{n=1}^N \log (1+C_2 a_n) \le C_2 \sum_{n=1}^N \log (1+a_n)
= C_2\log  \frac{|V_{0,N}|}{|V_{0,0}|}.
\]
Now we have proved the following lemma.

\begin{lemma} 
\label{lem:diff1}
The difference
$\sum_{n=1}^N \Delta \Phi_n$ on the right-hand side  of
(\ref{eq:2-5}) is bounded from above as
\[
\sum_{n=1}^N \Delta \Phi_n \le  
C_2( \log |V_{0,N}| - \log |V_{0,0}|).
\]
\end{lemma}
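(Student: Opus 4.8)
The statement is essentially an assembly of the ingredients already laid out in the preceding paragraphs, so the plan is to chain those inequalities together cleanly. Starting point: inequality (\ref{eq:delta-phi-upper-1}) together with the determinant identity (\ref{eq:det-form2}) gives
\[
\Delta \Phi_n \le \log\bigl(1 + x_n(\alpha_n^*)\tp V_{0,n-1}(\alpha_{n-1}^*,\alpha_n^*)^{-1} x_n(\alpha_{n-1}^*)\bigr).
\]
The goal is to replace the awkward quadratic form on the right — which mixes the two evaluation points $\alpha_{n-1}^*$ and $\alpha_n^*$ and uses the $\alpha$-dependent matrix $V_{0,n-1}(\alpha_{n-1}^*,\alpha_n^*)$ — by the clean quantity $C_2\, x_n\tp V_{0,n-1}^{-1} x_n$, and then telescope.

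First I would justify the three elementary bounds stated in the text. (i) Each scalar factor $1+\alpha\cdot x_i$ appearing in $V_{0,n-1}(\alpha_{n-1}^*,\alpha_n^*) = \sum_i x_i x_i\tp / \bigl((1+\alpha_{n-1}^*\cdot x_i)(1+\alpha_n^*\cdot x_i)\bigr)$ is positive (by the definition of $P^d_{n_0,\epsilon_0}$ and condition (\ref{eq:2-0})) and bounded above by $C_1$ in (\ref{eq:C1}); since every summand is a rank-one nonnegative matrix, decreasing the positive scalar denominators only enlarges the matrix in the Loewner order, yielding $V_{0,n-1}(\alpha_{n-1}^*,\alpha_n^*) \ge C_1^{-2} V_{0,n-1}$, hence $V_{0,n-1}(\alpha_{n-1}^*,\alpha_n^*)^{-1} \le C_1^2 V_{0,n-1}^{-1}$ in the Loewner order. (ii) For $n\ge 1$ and $m\ge 1$ we have $1+\alpha_m^*\cdot x_n \ge \epsilon_0$, again by (\ref{eq:2-0}); therefore $\|x_n(\alpha_n^*)\| \le \|x_n\|/\epsilon_0$ and likewise for $x_n(\alpha_{n-1}^*)$. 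Combining (i) and (ii) via Cauchy–Schwarz (or just by writing the quadratic form as an inner product with the positive definite matrix $V_{0,n-1}^{-1}$, noting $x_n(\alpha)$ is a positive scalar multiple of $x_n$) gives
\[
x_n(\alpha_n^*)\tp V_{0,n-1}(\alpha_{n-1}^*,\alpha_n^*)^{-1} x_n(\alpha_{n-1}^*) \le \frac{C_1^2}{\epsilon_0^2}\, x_n\tp V_{0,n-1}^{-1} x_n = C_2\, a_n,
\]
with $a_n = x_n\tp V_{0,n-1}^{-1} x_n \ge 0$ as in the text. Using monotonicity of $\log(1+\cdot)$ then gives $\Delta\Phi_n \le \log(1+C_2 a_n)$.

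Next comes the telescoping. I would record the identity $1 + a_n = |V_{0,n}|/|V_{0,n-1}|$ (the matrix determinant lemma: $V_{0,n} = V_{0,n-1} + x_n x_n\tp$, so $|V_{0,n}| = |V_{0,n-1}|\,(1 + x_n\tp V_{0,n-1}^{-1} x_n)$, valid because $V_{0,n-1}$ is positive definite thanks to the training data spanning $\mathbb{R}^d$). Then invoke the stated elementary inequality $\log(1+ac)\le c\log(1+a)$ for $c\ge 1$, $a\ge 0$ (which follows from $1+ac \le (1+a)^c$, itself immediate from Bernoulli/convexity), with $c = C_2$; note $C_2 = C_1^2/\epsilon_0^2 \ge 1$ since $C_1 \ge C_{1,0} \ge 1$ (as $0\in\co D$ forces $\sup(1+\alpha\cdot x)\ge 1$) and $\epsilon_0 < 1$. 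This yields
\[
\sum_{n=1}^N \Delta\Phi_n \le C_2 \sum_{n=1}^N \log(1+a_n) = C_2 \sum_{n=1}^N \log\frac{|V_{0,n}|}{|V_{0,n-1}|} = C_2\bigl(\log|V_{0,N}| - \log|V_{0,0}|\bigr),
\]
which is exactly the claim.

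There is no real obstacle here; the only point requiring a little care is the Loewner-order manipulation in step (i) — making sure one is allowed to pass from the scalar bound on each $1+\alpha\cdot x_i$ to a matrix inequality, and then invert it (inversion reverses Loewner order, and both matrices are positive definite because of the training data). Everything else is bookkeeping: confirming $C_2 \ge 1$ so the $\log(1+ac)\le c\log(1+a)$ step applies, and confirming positivity of all the scalar denominators so that the matrices involved are genuinely positive definite and the determinant identity is valid.
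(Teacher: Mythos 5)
Your proof is correct and follows essentially the same route as the paper: bound $\Delta\Phi_n$ via (\ref{eq:delta-phi-upper-1}), replace the mixed quadratic form by $C_2\,x_n\tp V_{0,n-1}^{-1}x_n$ using the Loewner bound $V_{0,n-1}(\alpha_{n-1}^*,\alpha_n^*)\ge C_1^{-2}V_{0,n-1}$ and $1+\alpha_m^*\cdot x_n\ge\epsilon_0$, then apply $\log(1+ac)\le c\log(1+a)$ and telescope the determinant ratios. The extra checks you supply (positive definiteness from the training data, $C_2\ge 1$) are exactly the points the paper leaves implicit, and they are verified correctly.
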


Since $|V_{0,N}|$ involves the training data, 
for simplicity in our statement  we further bound 
it as follows.
By the inequality between the geometric mean and arithmetic mean we have
$|V_{0,N}|^{1/d} \le  \tr V_{0,N}/d$. Hence
\begin{align*}
\log |V_{0,N}| 
&\le d \log  \tr V_{0,N} - d \log d
= d \log (\tr V_N + \tr V_{0,0}) -  d \log d\\
& \le 
d \log (\tr V_N + \tr V_{0,0}).
\end{align*}
If $\tr V_N \le 1$, then 
$\log (\tr V_N + \tr V_{0,0}) \le \log(1+\tr V_{0,0}) \le \tr V_{0,0}$.
On the other hand if  $\tr V_N >  1$, then
\begin{align*}
\log (\tr V_N + \tr V_{0,0}) &=\log \tr V_N + \log (1+\frac{\tr V_{0,0}}{\tr V_N})
\le \log \tr V_N + \frac{\tr V_{0,0}}{\tr V_N} \\
&\le
\log \tr V_N + \tr V_{0,0}.
\end{align*}
Therefore for both cases
$\log |V_{0,N}| \le d \max(0,\log\tr V_N) + d \tr V_{0,0}$.
Let $C_3=C_2(d \tr V_{0,0} - \log |V_{0,0}|)$.
In summary we have the following bound.  
\begin{lemma} 
\label{lem:diff2}
The difference
$\sum_{n=1}^N \Delta \Phi_n$ on the right-hand side  of
(\ref{eq:2-5}) is bounded from above as
\begin{equation}
\label{eq:boundphin}
\sum_{n=1}^N \Delta \Phi_n \le  
d C_2 \max(0,\log\tr V_N) + C_3,
\end{equation}
where $C_2, C_3$ depend only on the training data.
\end{lemma}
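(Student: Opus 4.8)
The plan is to derive Lemma \ref{lem:diff2} directly from Lemma \ref{lem:diff1}, replacing the determinant term $\log|V_{0,N}|$ by a bound involving only $\max(0,\log\tr V_N)$ and quantities that depend on the training data. Lemma \ref{lem:diff1} already gives $\sum_{n=1}^N \Delta\Phi_n \le C_2(\log|V_{0,N}| - \log|V_{0,0}|)$, and $\log|V_{0,0}|$ is a training‑data constant, so it suffices to prove $\log|V_{0,N}| \le d\max(0,\log\tr V_N) + d\,\tr V_{0,0}$ and then set $C_3 = C_2(d\,\tr V_{0,0} - \log|V_{0,0}|)$.

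First I would record that $V_{0,N}$ is positive definite: by the construction of the training data in Section \ref{sec:sequential-strategy} the training vectors span $\mathbb{R}^d$, so $V_{0,0}$, and hence $V_{0,N} = V_{0,0} + V_N$, is positive definite and $\log|V_{0,N}|$ is finite. Applying the arithmetic–geometric mean inequality to the $d$ eigenvalues of $V_{0,N}$ yields $|V_{0,N}|^{1/d} \le \tr V_{0,N}/d$, hence $\log|V_{0,N}| \le d\log\tr V_{0,N} - d\log d \le d\log\tr V_{0,N}$. Writing $\tr V_{0,N} = \tr V_N + \tr V_{0,0}$ with $\tr V_{0,0} = \sum_{n=-n_0+1}^{0}\Vert x_n\Vert^2$ a training‑data constant, it remains to bound $\log(\tr V_N + \tr V_{0,0})$. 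Here I would split into two cases, which is precisely where the $\max(0,\cdot)$ enters, since $\tr V_N$ may be small or even bounded (Reality may play $x_n\equiv 0$): if $\tr V_N \le 1$, then $\log(\tr V_N + \tr V_{0,0}) \le \log(1+\tr V_{0,0}) \le \tr V_{0,0}$ by $\log(1+t)\le t$; if $\tr V_N > 1$, then $\log(\tr V_N + \tr V_{0,0}) = \log\tr V_N + \log\bigl(1 + \tr V_{0,0}/\tr V_N\bigr) \le \log\tr V_N + \tr V_{0,0}$, again by $\log(1+t)\le t$. In both cases $\log(\tr V_N + \tr V_{0,0}) \le \max(0,\log\tr V_N) + \tr V_{0,0}$, so $\log|V_{0,N}| \le d\max(0,\log\tr V_N) + d\,\tr V_{0,0}$. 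Substituting this into the bound of Lemma \ref{lem:diff1} gives $\sum_{n=1}^N \Delta\Phi_n \le dC_2\max(0,\log\tr V_N) + C_2(d\,\tr V_{0,0} - \log|V_{0,0}|)$, which is the claim with $C_3 = C_2(d\,\tr V_{0,0} - \log|V_{0,0}|)$.

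I do not anticipate a real obstacle: the argument is a short chain of elementary inequalities layered on Lemma \ref{lem:diff1}. The one point deserving attention is the use of the training data to guarantee that $V_{0,N}$ is nonsingular, so that the AM–GM step on $\log|V_{0,N}|$ is both legitimate and yields a finite bound — the analogous estimate fails for the untrained matrix $V_N$ of \eqref{eq:snvn}, which can be singular, and this is exactly why the argument is carried out with $V_{0,N}$ rather than $V_N$.
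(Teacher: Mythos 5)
Your proposal is correct and follows essentially the same route as the paper: starting from Lemma \ref{lem:diff1}, applying the arithmetic--geometric mean inequality to get $\log|V_{0,N}|\le d\log\tr V_{0,N}$, splitting on $\tr V_N\le 1$ versus $\tr V_N>1$ with $\log(1+t)\le t$, and setting $C_3=C_2(d\,\tr V_{0,0}-\log|V_{0,0}|)$. Your added remark on why the training data are needed to make $V_{0,0}$ (hence $V_{0,N}$) nonsingular is consistent with the paper's earlier discussion and is a correct observation.
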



Note that (\ref{eq:boundphin}) is true even for the case that $\lim_N \tr V_N < \infty$.
Note also that $\log \tr V_N$ is  of order $O(\log N)$
even for $V_N=O(N^{\gamma})$, $0< \gamma < 1$.
However for $\tr V_N=\log N$ we have $\log \tr V_N=\log \log N$.





\subsection{Bounding the hindsight strategy from below}

In this subsection we bound the hindsight strategy from below and thus finish
the proof of Theorem \ref{thm:gen-slln}.  

Consider the function $(1+t)\log(1+t)$, $t > -1$. By Taylor expansion we have
\[
(1+t)\log (1+t)= t + \frac{1}{2} \frac{t^2}{1+\theta t},   \qquad 0 < \theta <1.
\]
By the definition of $C_1$ in (\ref{eq:C1}) we have
\[
(1+\alpha_N^* \cdot x_n )\log (1+\alpha_N^* \cdot x_n)
\ge \alpha_N^* \cdot x_n + \frac{(\alpha_N^* \cdot x_n)^2}{2C_1} ,
\quad \forall N\ge 1, \ -n_0 +1 \le \forall n \le N,
\]
and
\begin{align}
\Phi_{0,N}(\alpha_N^*)&= \sum_{n=-n_0+1}^N (1+\alpha_N^* \cdot x_n) \log(1+\alpha_N^* \cdot x_n) 
\frac{1}{1+\alpha_N^* \cdot x_n} \nonumber \\
&\ge  \sum_{n=-n_0+1}^N \frac{\alpha_N^* \cdot x_n}{1+\alpha_N^* \cdot x_n}
+ \frac{1}{2C_1}\sum_{n=-n_0+1}^N \frac{(\alpha_N^* \cdot x_n)^2}{1+\alpha_N^* \cdot x_n}
\nonumber \\
&= \frac{1}{2C_1}\sum_{n=-n_0+1}^N \frac{(\alpha_N^* \cdot x_n)^2}{1+\alpha_N^* \cdot x_n},
\label{eq:bound1}
\end{align}
where we have used the fact $E_{g_N^*}[x]=0$.
By (\ref{eq:2-8b})
the summation on the right-hand side can be written as
\begin{equation}
\label{eq:empirical-bound-10-2}
\sum_{n=-n_0+1}^N \frac{(\alpha_N^* \cdot x_n)^2}{1+ \alpha_N^* \cdot x_n}
= \alpha_N^{*\tpl} V_{0,N}^* \alpha_N^* = \alpha_N^{*\tpl} s_{0,N} 
= s_{0,N}\tp  V_{0,N}^{*-1}  s_{0,N}.
\end{equation}

In analyzing the behavior of (\ref{eq:empirical-bound-10-2})
we need to be careful about the following fact: 
$1+\alpha_N^*\cdot x_n$, $n\le 0$, may be arbitrarily close to zero
for the training data $x_{-n_0+1}, \dots, x_0$.
%
In particular
we might have different behavior between eigenvalues of
$V_{0,N}^*$ and and those of  $V_N$.
To assess the effect of training data  let
\[
A_N = \sum_{n=-n_0+1}^{0} \frac{1}{1+\alpha_N^* \cdot x_n}
\]
and define the following event 
\[
E_1 : \limsup_N \frac{A_N}{\max(0,\log \tr V_N)} <\infty .
\]
Again $\max$ is needed only for the case that $\tr V_N \le 1$ for all $N$.
We now show that Skeptic can weakly force $E_1$.
Fix an arbitrary $\xi\in E_1^c$, where $E_1^c$ denotes
the complement of $E_1$. Then $\limsup_N A_N/\max(0,\log\tr V_N)=\infty$ 
and hence there exists some $n_1\le 0$ such that
\[
\limsup_N \frac{1/(1+\alpha_N^*\cdot x_{n_1})}{\max(0,\log \tr V_N)} = \infty.
\]
Then there exits a subsequence of rounds $N_1 < N_2 < \cdots$  such that
\[
\lim_k \frac{1/(1+\alpha_{N_k}^*\cdot x_{n_1})}{\max(0,\log \tr V_{N_k})} =\infty.
\]
Because $\alpha_{N_k}^*\cdot x_{n_1} \rightarrow -1$ we have
\[
\limsup_N \frac{\frac{(\alpha_{N}^*\cdot x_{n_1})^2}{1+\alpha_{N}^*\cdot x_{n_1}}}
{\max(0,\log \tr V_N)} = \infty.
\]
If we compare this with  the left-hand side of (\ref{eq:empirical-bound-10-2}),
we see that a single term $x_{n_1}$ of the training data
contributes arbitrary large gain to
Skeptic in comparison to the right-hand side of (\ref{eq:boundphin}).
This implies that  $\limsup_N\log \cK_{1,N}^*=\infty$.
We have proved that by SOS Skeptic can weakly force $E_1$.
Therefore from now we only consider $\xi\in E_1$.

At this point we distinguish two cases 1) $E_2: \lim_N \tr V_N < \infty$
or 2) $E_2^c: \lim_N \tr V_N = \infty$.  Consider the first case and 
fix an arbitrary $\xi\in E_2\cap E_1$.  
For such a $\xi$ there exists $\delta(\xi) > 0$ such that
$\liminf_N (1+\alpha_N^* \cdot x_n) \ge \delta(\xi)$
for all $n<0$. Then
\[
V_{0,N}^* \le \frac{1}{\max(\epsilon_0,\delta(\xi))}\sum_{n=-n_0+1}^N x_n x_n\tp
\]
and hence the maximum eigenvalue $\lambda_{\max,0,N}$ of $V_{0,N}^*$ is bounded.
Then 
\[
s_{0,N}\tp V_{0,N}^{*-1} s_{0,N} \ge \frac{\Vert s_{0,N}\Vert^2}
{\lambda_{\max,0,N}}
\]
and
$\limsup_N \log \cK_{1,N}^* = \infty$ if 
$\limsup_N \Vert s_{0,N}\Vert^2 =\infty$.  Noting that
$\limsup_N \Vert s_{0,N}\Vert^2 =\infty$ if and only if 
$\limsup_N \Vert s_N\Vert^2 =\infty$,
we have shown that by SOS skeptic can weakly force
\[
\lim_N \tr V_N < \infty\  \Rightarrow  \limsup_N \Vert s_N\Vert^2 < \infty .
\]

Now consider the second case $E_2^c \cap E_1$.  
On $E_2^c \cap E_1$
\[
\lim_N \frac{\log \tr V_N}{\tr V_N}=0
\]
always holds.   Also on $E_2^c \cap E_1$
\[
\limsup_N \frac{\tr V_{0,0}(\alpha_N^*)}{\log \tr V_N}< \infty \quad \text{where}\ \ 
V_{0,0}(\alpha_N^*)=\sum_{n=-n_0+1}^{0} 
\frac{x_n x_n\tp}{1+\alpha_N^*\cdot x_n}.
\]
Therefore on $E_2^c \cap E_1$
\[
\lim_N \frac{\tr V_{0,0}(\alpha_N^*)}{\tr V_N}
=0.
\]
Also 
\[
\tr(V_{0,N}^* - V_{0,0}(\alpha_N^*)) \le \frac{1}{\epsilon_0} \tr V_N, 
\]
and hence on  $E_2^c \cap E_1$ 
\[
\limsup_N \frac{\tr V_{0,N}^*}{\tr V_N} 
= \limsup_N \frac{\tr V_{0,0}(\alpha_N^*) + \tr (V_{0,N}^*-V_{0,0}(\alpha_N^*))}
{\tr V_N} 
\le \frac{1}{\epsilon_0}.
\]

Now on the right-hand side of 
(\ref{eq:empirical-bound-10-2}),  
for every  $\xi\in E_2^c\cap E_1$ 
there exists $N_0=N_0(\xi)$  such that for all 
$n\ge N_0$ 
\[
\Phi_{0,N}(\alpha_N^*) \ge \frac{1}{2C_1} 
\frac{\Vert s_{0,N}\Vert^2}{\tr V_{0,N}^*}
\ge \frac{\epsilon_0}{4C_1} 
\frac{\Vert s_{0,N}\Vert^2}{\tr V_N}.
\]
Hence if  for this $\xi$ 
\[
\limsup_N \frac{\Vert s_{0,N} \Vert^2}{\tr V_{N} \log \tr V_{N}} = \infty
\]
then $\limsup \log \cK_{1,N}^*=\infty$ in view of Lemma \ref{lem:diff2}.
However on $E_2^c$ the following two events are equivalent:
\[
\limsup_N \frac{\Vert s_{0,N} \Vert^2}{\tr V_{N} \log \tr V_{N}} = \infty
\ \ \Leftrightarrow \ \ 
\limsup_N \frac{\Vert s_{N} \Vert^2}{\tr V_{N} \log \tr V_{N}} = \infty.
\]
This completes the proof of Theorem \ref{thm:gen-slln}.

\subsection{Better approximation to the capital process of SOS}
\label{subsec:approximation}

Note that (\ref{eq:delta-phi-upper-1}) is convenient because it gives an upper bound 
which always holds.
However bounding by $\Phi_n(\alpha_n^*)-\Phi_n(\alpha_{n-1}^*)\le 0$
in (\ref{eq:delta-phi-upper-0}) is not very accurate.
By expanding $\Phi_n(\alpha_{n-1}^*)$ at 
$\alpha = \alpha_n^*$ and by noting 
$\partial \Phi_n(\alpha_n^*) = 0$, we have
\begin{equation}
\label{eq:phin-approx}
\Delta \Phi_n = \frac{1}{2}\Delta \alpha_n^{*\textrm{t}} I_n({\bar \alpha}_n^*)
\Delta \alpha_n^*,
\quad {\bar \alpha}_n^* = \theta \alpha_{n-1}^* + 
(1-\theta)\alpha_n^*,\ 0 < \theta < 1,
\end{equation}
where $I_n(\alpha)=V_{0,n}(\alpha,\alpha)$ is a $d\times d$ positive-definite matrix 
given by
\begin{align*}
I_n(\alpha) = -\partial \partial\tp \Phi_n(\alpha) = 
\sum_{i=-n_0+1}^n x_i(\alpha)x_i(\alpha)\tp.
\end{align*}
Comparing (\ref{eq:phin-approx}) with 
the right-hand side of 
(\ref{eq:delta-phi-upper-0}), we see that the upper bound in
(\ref{eq:delta-phi-upper-0}) is about twice 
the actual value of $\Delta \Phi_n$.  Now 
by Lemma \ref{lem:convergence2}
and (\ref{eq:det-form2}),
we can approximate $\Delta \Phi_n$ as
\[
\Delta \Phi_n \sim  
\frac{1}{2}\log \frac{|I_n(\alpha_n^*)|}{|I_{n-1}(\alpha_{n}^*)|}.
\]
Then accumulated these sum is approximated as
\begin{align}
\label{eq:2-6}
\sum_{n=1}^N \Delta \Phi_n &\sim    
\frac{1}{2}\sum_{n=1}^N \log \frac{|I_n(\alpha_n^*)|}{|I_{n-1}(\alpha_{n}^*)|} = \frac{1}{2}\log\ [I_N],\quad  
[I_N] = \prod_{n=1}^N \frac{|I_n(\alpha_n^*)|}{|I_{n-1}(\alpha_{n}^*)|}.
\end{align}
Hence from (\ref{eq:2-3}), (\ref{eq:2-5}) and (\ref{eq:2-6}) we obtain
\begin{align*}
\log {\cal K}_{1,N}^* &= \log {\bar {\cal K}}_N^*-
\sum_{n=1}^N \Delta \Phi_n  - \Phi_{0,0}(\alpha_0^*)
\sim ND(g_N \| g^*_N) - \frac{1}{2}\log\ [I_N].
\end{align*}
The above result is summarized in the following theorem.
\begin{theorem}
The log capital of the sequential optimizing strategy
$\log {\cal K}_{1,N}^*$ is approximated  as
\begin{align}
\label{eq:2-7}
\log {\cal K}_{1,N}^* \sim   
ND(g_N \| g^*_N) - \frac{1}{2}\log\ [I_N],\quad [I_N] = \prod_{n=1}^N \frac{|I_n(\alpha_n^*)|}{|I_{n-1}(\alpha_{n}^*)|}.
\end{align}
\end{theorem}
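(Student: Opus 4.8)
The plan is to assemble the stated approximation from the two exact identities already in hand, together with a second-order control of the increments $\Delta\Phi_n$. Combining the Kullback--Leibler expression (\ref{eq:2-3}) for the hindsight log capital with the summation-by-parts decomposition (\ref{eq:2-5}) gives the exact relation
\[
\log {\cal K}_{1,N}^* = (N+n_0)\, D(g_N\|g_N^*) - \sum_{n=1}^N \Delta\Phi_n - \Phi_{0,0}(\alpha_0^*).
\]
The term $\Phi_{0,0}(\alpha_0^*)$ depends only on the training data and is a constant, hence negligible; and replacing $N+n_0$ by $N$ perturbs the first term only by $n_0 D(g_N\|g_N^*)$, which is of lower order. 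So the statement reduces to establishing $\sum_{n=1}^N \Delta\Phi_n \sim \tfrac12 \log [I_N]$.

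For a single increment I would start from the exact Taylor identity (\ref{eq:phin-approx}), $\Delta\Phi_n = \tfrac12 \Delta\alpha_n^{*\mathrm{t}} I_n(\bar\alpha_n^*)\Delta\alpha_n^*$ with $\bar\alpha_n^*$ on the segment joining $\alpha_{n-1}^*$ and $\alpha_n^*$, and substitute the exact formula (\ref{eq:delta-alpha}) for $\Delta\alpha_n^*$. By Lemma \ref{lem:convergence2} we have $\Delta\alpha_n^*\to 0$, so $\bar\alpha_n^*$ and $\alpha_{n-1}^*$ may be replaced by $\alpha_n^*$ up to $o(1)$ perturbations of the matrices involved; this turns $V_{0,n-1}(\alpha_{n-1}^*,\alpha_n^*)$ into $I_{n-1}(\alpha_n^*) = V_{0,n-1}(\alpha_n^*,\alpha_n^*)$ and $I_n(\bar\alpha_n^*)$ into $I_n(\alpha_n^*) = I_{n-1}(\alpha_n^*) + x_n(\alpha_n^*) x_n(\alpha_n^*)\tp$. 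Equivalently, and more transparently, I would feed the same replacement into the upper-bound chain (\ref{eq:delta-phi-upper-0})--(\ref{eq:det-form2}): the right-hand side of (\ref{eq:delta-phi-upper-0}) equals, by (\ref{eq:det-form2}), $\log\big(|V_{0,n}(\alpha_{n-1}^*,\alpha_n^*)| / |V_{0,n-1}(\alpha_{n-1}^*,\alpha_n^*)|\big)$, which tends to $\log\big(|I_n(\alpha_n^*)| / |I_{n-1}(\alpha_n^*)|\big)$, while the nonpositive term $\Phi_{n-1}(\alpha_n^*)-\Phi_{n-1}(\alpha_{n-1}^*)$ dropped in (\ref{eq:delta-phi-upper-0}) is, by the same Taylor argument with vanishing gradient at $\alpha_{n-1}^*$, asymptotically $-\tfrac12$ times that quantity; adding the two recovers the ``about twice'' heuristic noted in the text and yields $\Delta\Phi_n \sim \tfrac12 \log\big(|I_n(\alpha_n^*)| / |I_{n-1}(\alpha_n^*)|\big)$.

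Summing over $n$ then gives $\sum_{n=1}^N \Delta\Phi_n \sim \tfrac12 \sum_{n=1}^N \log\big(|I_n(\alpha_n^*)| / |I_{n-1}(\alpha_n^*)|\big) = \tfrac12 \log [I_N]$, where $[I_N] = \prod_{n=1}^N |I_n(\alpha_n^*)|/|I_{n-1}(\alpha_n^*)|$ does not telescope because the denominator uses $\alpha_n^*$ rather than $\alpha_{n-1}^*$; combined with the first paragraph this is (\ref{eq:2-7}). The main obstacle is precisely the passage from the termwise asymptotic to the sum: each term carries an error mixing the $o(1)$ perturbation from $\Delta\alpha_n^*\to 0$ with the second-order gap between $t$ and $\log(1+t)$ in $t = x_n(\alpha_n^*)\tp I_{n-1}(\alpha_n^*)^{-1} x_n(\alpha_n^*)$, and a priori these could accumulate. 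They are harmless in the regime of primary interest, where $I_n(\alpha_n^*)$ (equivalently $V_n$) grows, so that $t\to 0$ and $\log [I_N]\to\infty$; in degenerate regimes (e.g.\ $\tr V_N$ bounded) both sides of (\ref{eq:2-7}) stay bounded and ``$\sim$'' should be read as the approximation that motivates the information criterion of the next subsection rather than as a sharp equivalence. I would therefore present the result exactly as stated.
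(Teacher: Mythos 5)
Your proposal follows essentially the same route as the paper: it combines the exact identities (\ref{eq:2-3}) and (\ref{eq:2-5}), uses the second-order Taylor identity (\ref{eq:phin-approx}) together with Lemma \ref{lem:convergence2} and the determinant relation (\ref{eq:det-form2}) to get $\Delta\Phi_n \sim \tfrac12\log\bigl(|I_n(\alpha_n^*)|/|I_{n-1}(\alpha_n^*)|\bigr)$, and then sums. Your explicit caveat about the possible accumulation of termwise errors (and the reading of ``$\sim$'' as an approximation rather than a sharp equivalence) is in fact more candid than the paper's own presentation, which passes over this point silently.
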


Here we note that the quantity $|I_n(\alpha_n^*)|$ also appeared in the evaluation of Cover's universal portfolio \cite{cover:1991} in the name of sensitivity (curvature, volatility) index. Differently from the form 
$[I_N]$ in SOS, only the last term $|I_N(\alpha_N^*)|$ enters 
in the sensitivity index.
This difference reflects the fact that SOS depends on the intermediate moves of Reality's path $\xi^N = x_1\cdots x_N$, whereas the universal portfolio is independent of them.
 
We found that the approximation (\ref{eq:2-7}) is extremely accurate in practice 
(cf. Section \ref{sec:numerical}).  Thus 
we propose to
use this approximation as an information criterion for selecting betting items.
Let us denote the betting game with $d$ items by $Game(d)$, and suppose that there is a sequence of nested betting games such that
\begin{align*}
Game (1) \subset Game (2) \subset \cdots \subset Game ({\bar d}).
\end{align*}
We also write the main terms of (\ref{eq:2-7}) in $Game (d)$ as
\begin{align*}
\log {\cal K}_{1,N}^*(d) \sim  
ND_d(g_N \| g^*_N) - \frac{1}{2}\log\ [I_N]_d.
\end{align*}
As functions of $d$, $D_d(g_N \| g^*_N)$ increases monotonically
and $\log\ [I_N]_d$ is also expected to increase monotonically (cf. Section \ref{sec:numerical}). 
Hence due to the trade-off between $D_d(g_N \| g^*_N)$ and 
$\log\ [I_N]_d$ with respect to $d$, we can expect that  
$\max_{1\le d \le {\bar d}} \log {\cal K}_{1,N}^*(d)$ provides the optimal number $d^*$ of betting items. 
Including this subject, we will examine the obtained results by numerical examples in Section \ref{sec:numerical}.

Finally we give a brief proof of Theorem \ref{thm:gen-slln2}.
The point of the proof is to show that  $\alpha_N^* \rightarrow 0$ on $E'$.

\begin{proof}[Proof of Theorem \ref{thm:gen-slln2}]
$E'$ in (\ref{eq:min-log-max}) holds only if 
$\lminN \rightarrow\infty$.
Then by (\ref{eq:bound1}) and  (\ref{eq:empirical-bound-10-2}) we have
\[
\Phi_{0,N}(\alpha_N^*) \ge \frac{1}{C_1^2} \Vert \alpha_N^*\Vert^2 \lminN.
\]
Note that $\log |V_N| \le d \log \lmaxN$.
Therefore if $\limsup \Vert \alpha_N^*\Vert > 0$ then
$\limsup_N \cK_{1,N}^*=\infty$.  This shows that
conditional on $E'$ Skeptic can weakly force the event $\alpha_N^* \rightarrow 0$.

However when $\alpha_N^*\rightarrow 0$, for all sufficiently large $N$ 
we can approximate
\[
\Phi(\alpha_N^*)\sim \frac{1}{2} s_N\tp V_N^{-1} s_N, \qquad
\sum_{n=1}^N \Delta\Phi_n \sim \frac{1}{2} \log |V_N|.
\]
Since $\log |V_N| \rightarrow\infty$ on $E'$, if
$\limsup_N s_N\tp V_N^{-1} s_N/\log |V_N| > 1$ then 
$\limsup_N \log K_{1,N}^* = \infty$.  Therefore conditional on $E'$, by SOS Skeptic
can weakly force $\limsup_N s_N\tp V_N^{-1} s_N/\log |V_N| \le  1$.
\end{proof}


\section{High frequency limit order SOS in multiple asset trading games in continuous time}
\label{sec:high}

In this section we generalize the results of 
\cite{takeuchi-kumon-takemura-bernoulli} to the multi-dimensional case and 
apply SOS as a high-frequency limit order type investing strategy
to multiple asset trading games in continuous time. 
We follow the notation and the definitions in \cite{takeuchi-kumon-takemura-bernoulli}.
For simplicity of statements we make convenient assumptions 
and only present salient aspects of SOS.

Let $\Omega^d$ 
denote the set of $d$-dimensional (component-wise) positive continuous functions on 
$[0,\infty)$.  
Market (Reality) chooses an element $S(\cdot) \in \Omega^d$.
Investor (Skeptic) enters the market at time $t=t_0=0$  
with the initial capital of ${\cal K}(0)=1$ and he will buy or
sell any amount of the assets $S(t) = (S^1(t), \dots, S^d(t))\tp$ at discrete time points $0=t_0 <  t_1 < t_2 < \cdots $, provided that his capital always remains non-negative. 
His repeated tradings up to time $t_i$ determine
$M_i = (M_i^1, \dots, M_i^d)\tp \in {\mathbb R}^d$, where $M_i^j$ denotes the amount of the asset $S^j(t)$ he holds for the time interval $[t_i,t_{i+1})$. 
Let ${\cal K}(t)$ denote the capital of Investor 
at time $t$, which is written as
\begin{align}
\label{eq:1-2}
&{\cal K}(t) = {\cal K}(t_i) + M_i\cdot(S(t) - S(t_i))\quad 
\textrm{for}\ \ t_i \le t  < t_{i+1},
\end{align}
with ${\cal K}(0) = 1$. By defining
\[
\alpha_i = (\alpha_i^1, \dots, \alpha_i^d)\tp,\quad 
\alpha_i^j = \frac{M_i^jS^j(t_i)}{{\cal K}(t_i)}, 
\]
we rewrite (\ref{eq:1-2}) as
\[
{\cal K}(t) = {\cal K}(t_i)
\left(1 + \alpha_i\cdot \frac{S(t) - S(t_i)}{S(t_i)}\right)\quad 
\textrm{for}\ \ t_i \le  t < t_{i+1} 
\]
in terms of the returns of the assets given by
\begin{align*}
\frac{S(t) - S(t_i)}{S(t_i)} = \left(\frac{S^1(t) - S^1(t_i)}{S^1(t_i)}, \dots, \frac{S^d(t) - S^d(t_i)}{S^d(t_i)}\right)\tp.
\end{align*}

Investor takes some constant $\delta >0$ and decides the trading times $t_1, t_2, \dots$ by the ``limit order'' type strategy as follows. 
After $t_i$ is determined, 
let $t_{i+1}$ be the first time after $t_i$ when 
\begin{align}
\label{eq:1-5}
\left\|\frac{S(t_{i+1}) - S(t_i)}{S(t_i)}\right\| = \delta
\end{align}
happens. This process leads to a discrete time bounded forecasting game
embedded into the asset trading game in the following manner.  Let
\begin{align*}
x_n = (x_n^1, \dots, x_n^d)\tp \in C_\delta,\quad 
x_n^j = \frac{S^j(t_{n+1}) - S^j(t_n)}{S^j(t_n)},
\end{align*}
where $C_\delta$ denotes the sphere of radius $\delta$ in ${\mathbb R}^d$ 
given by (\ref{eq:1-5}), and also write ${\cal K}_n = {\cal K}(t_{n+1})$. 
Then we have the protocol of an embedded 
discrete time bounded forecasting game.

\bigskip\noindent
\textsc{Embedded Discrete Time Bounded Forecasting Game} \\
\textbf{Protocol:}

\parshape=6
\IndentI   \WidthI
\IndentI   \WidthI
\IndentII  \WidthII
\IndentII  \WidthII
\IndentII  \WidthII
\IndentI   \WidthI
\noindent
${\cal K}_0 :=1$, $\delta > 0$.\\
FOR  $n=1, 2, \dots$:\\
  Investor announces $\alpha_n\in{\mathbb R}^d$.\\
  Market announces $x_n\in C_\delta$.\\
  ${\cal K}_n = {\cal K}_{n-1}(1 + \alpha_n\cdot x_n)$.\\
END FOR\\
\

We now fix $T>0$, and 
Investor trades in the time interval $[0, T]$ by SOS  in (\ref{eq:2-3b}).
For $A>0$ let 
\[
E_{A,0,T} =  \{
S\in \Omega^d \  \mid  \ 
|\log S^j(x)-\log S^j(y)|\le A,\ \exists j\in \{1, \dots, d\},\ 
0 \le \forall x < \forall y  \le T\}.
\]
Market is assumed to choose $S(\cdot)\in  E_{A,0,T}^c$, 
which means that all $d$ items are active in some
time interval in $[0,T]$.
We define $N = N(T, \delta, S(\cdot))$ by 
$t_{N} < T \le t_{N+1}$. 
Note that by taking $\delta$ sufficiently
small, 
\begin{align*}
N(T, \delta, S(\cdot)) \ge  \frac{A}{\delta}
\end{align*}
for every $S(\cdot)\in E_{A,0,T}^c$, so that 
$N\rightarrow\infty$ as $\delta\rightarrow 0$.
Investor's capital ${\cal K}_\delta (T)$ at $t = T$ is written as
\begin{align*}
{\cal K}_\delta(T) 
= {\cal K}^*_{1,N}\left(1 + \alpha_{N-1}^*\cdot 
\frac{S(T) - S(t_{N})}{S(t_{N})}\right).
\end{align*} 
Since $\left\|\frac{S(T) - S(t_{N})}{S(t_{N})}\right\| 
\le \delta$, we have from (\ref{eq:2-7})
\begin{align}
\label{eq:3-1}
\log {\cal K}_\delta(T) = \log {\cal K}^*_{1,N} + O(1) 
\sim ND(g_{N}\| g_{N}^*) - \frac{1}{2}\log\ [I_N].
\end{align} 
The strategy (\ref{eq:2-8b}) is written as
$\alpha_{N}^* = \alpha_{N}^* (T,\delta, S(\cdot))=V_{0,N}^{*-1}s_{0,N}$.
We now assume (cf.\ Theorem \ref{thm:gen-slln2}) 
that  $\delta \alpha_{N}^* \rightarrow 0$ as $\delta\rightarrow 0$, i.e., 
Market chooses a path $S(\cdot) \in E_T'$, where
\[
E_T'=\{ S(\cdot)\in \Omega^d \mid 
\lim_{\delta\rightarrow 0} \delta \alpha_{N}^* (T,\delta, S(\cdot))=0 \}.
\]
Then 
\begin{align*}
\alpha_{N}^* 
= 
\Big(\sum_{n=-n_0+1}^{N} x_nx_n\tp \Big)^{-1}\sum_{n=-n_0+1}^{N}x_n \; (1+ 
O(\delta) )
= V_{0,N}^{-1}\Big(L(T) + \frac{1}{2}v_{0,N}\Big)(1+ O(\delta)),
\end{align*}
where
\begin{align*}
V_{0,N} &= \sum_{n=-n_0+1}^{N} x_n x_n\tp,\quad 
v_{0,N} = \Big(\sum_{n=-n_0+1}^{N}(x_n^1)^2, \dots, 
\sum_{n=-n_0+1}^{N}(x_n^d)^2\Big)\tp, \\
L(T) &= \log S(T) - \log S(0).
\end{align*}
We consider the first term $ND(g_{N}\| g_{N}^*)$ in (\ref{eq:3-1}).  
As was indicated by (\ref{eq:empirical-bound-10-2}),
\begin{align}
\label{eq:3-4}
ND(g_N \| g_N^*) &= \frac{1}{2}\alpha_N^{*\textrm{t}}V_{0,N}^* \alpha_N^*
(1 + O(\delta)) 
= \frac{1}{2}\alpha_N^{*\textrm{t}} V_{0,N} \alpha_N^* (1+ O(\delta)) \nonumber \\
&= \frac{1}{2}\Big[L(T)\tp V_{0,N}^{-1}L(T) + \frac{1}{2}\big(L(T)\tp V_{0,N}^{-1}v_{0,N} + v_{0,N}\tp V_{0,N}^{-1}L(T)\big) \nonumber \\
& \qquad\qquad\qquad
 + \frac{1}{4}v_{0,N}\tp V_{0,N}^{-1}v_{0,N}\Big](1 + O(\delta)).
\end{align}
The middle term is dominated by the first term and the third term by Cauchy-Schwarz:
\[
|L(T)\tp V_{0,N}^{-1}v_{0,N} + v_{0,N}\tp V_{0,N}^{-1}L(T)|
\le 2\sqrt{L(T)\tp V_{0,N}^{-1}L(T)} \sqrt{v_{0,N}\tp V_{0,N}^{-1}v_{0,N}}\ .
\]
Thus we consider the behavior of the first term and the third term.
Because $C_\delta$ is the sphere of radius $\delta$ we have
\[
\textrm{tr} V_N = \textrm{tr} D_N = N\delta^2 , 
\]
where 
\[
V_N = \sum_{n=1}^N x_nx_n\tp,\quad 
D_N = \textrm{diag}\ \Big(\sum_{n=1}^N (x_n^1)^2, \dots, 
\sum_{n=1}^N (x_n^d)^2 \Big).
\]
Also the training data are of order $\delta$. Hence 
$\textrm{tr} V_{0,N} - \textrm{tr} V_N= \textrm{tr} D_{0,N} -\textrm{tr}  D_N =O(\delta^2)$.

Let us decompose $V_{0,N}$ and $v_{0,N}$ as
\begin{align*}
&V_{0,N} = D_{0,N}^{1/2}R_{0,N}D_{0,N}^{1/2},\quad   
v_{0,N} = D_{0,N} 1_d, \quad 1_d = (1, \dots, 1)\tp,\\    
&D_{0,N} = \textrm{diag}\ \Big(\sum_{n=-n_0+1}^N (x_n^1)^2, \dots, 
\sum_{n=-n_0+1}^N (x_n^d)^2 \Big),
\end{align*}
where $R_{0,N}$ is the correlation matrix in $\{x^1_n, \dots, x^d_n\},\ n = -n_0+1, \dots, N$. Then 
\begin{align*}
v_{0,N}\tp V_{0,N}^{-1}v_{0,N} = 1_d\tp D_{0,N}^{1/2}R_{0,N}^{-1}D_{0,N}^{1/2}1_d
\ge \frac{1}{d} \textrm{tr} D_{0,N},
\end{align*}
because the maximum eigenvalue of $R_{0,N}$ is less than or equal to $d$.

Suppose that the H\"older exponent of $S(\cdot)$ is $0 < H < 1$ in the sense
that
\[
S(\cdot)\in E_{H,T} = \{ S(\cdot) \mid 0 < \liminf_{\delta\rightarrow 0}
\frac{\textrm{tr} V_N}{\delta^{(2-\frac{1}{H})}} \le 
\limsup_{\delta\rightarrow 0}
\frac{\textrm{tr} V_N}{\delta^{(2-\frac{1}{H})}} < \infty \}.
\]
By combining the arguments so far, if $S(\cdot)\in E_{A,0,T}^c\cap E_T'\cap 
E_{H,T}$  then the following implications hold:
\begin{align*}
&H > 0.5\ \Rightarrow\ \textrm{tr} D_N \to 0\ 
\Rightarrow\ L(T)\tp V_{0,N}^{-1}L(T) \to \infty,\\
&H < 0.5\ \Rightarrow\ \textrm{tr} D_N \to \infty\ 
\Rightarrow\ v_{0,N}\tp V_{0,N}^{-1}v_{0,N} \to \infty.
\end{align*}
Also it is easily shown that the second term $\frac{1}{2}\log [I_N]$ 
in (\ref{eq:3-1}) is of 
smaller order than $ND(g_{N}\| g_{N}^*)$. We summarize our result as a theorem, which is
a multi-dimensional generalization of Theorem 3.1 in  
\cite{takeuchi-kumon-takemura-bernoulli}.

\begin{theorem}
\label{thm:4-1}
By a high frequency $(\delta \to 0)$ limit order type 
sequential optimizing strategy in multiple asset trading games in continuous time,  Investor 
can essentially force $H = 0.5$ for $S(\cdot) \in E_{A,0,T}^c$ in the sense
\[
S(\cdot)\in E_{A,0,T}^c\cap E_T'\cap 
E_{H,T} \ \text{and}\ H \neq 0.5\ \ \Rightarrow\ \ {\cal K}_\delta(T) \to \infty\ \ 
\textrm{as}\ \ \delta \to 0.
\]
\end{theorem}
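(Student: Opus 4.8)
The plan is to read the conclusion off the chain of estimates assembled just before the statement, organized as a dichotomy on $H$. I start from the approximation (\ref{eq:3-1}), namely $\log {\cal K}_\delta(T) = \log {\cal K}^*_{1,N} + O(1) \sim ND(g_N\|g_N^*) - \frac{1}{2}\log[I_N]$, which is licensed on $E_T'$ because there $\delta\alpha_N^*\to 0$, so that the expansion leading to (\ref{eq:2-7}) applies (this is the continuous-time counterpart of conditioning on $E'$ in Theorem \ref{thm:gen-slln2}). Thus it suffices to show that on $E_{A,0,T}^c\cap E_T'\cap E_{H,T}$ with $H\neq 0.5$ the principal term $ND(g_N\|g_N^*)$ diverges, while $\frac{1}{2}\log[I_N]$ and the additive $O(1)$ remain of strictly smaller order, as $\delta\to0$.

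Next I would exploit the expansion (\ref{eq:3-4}) of $ND(g_N\|g_N^*)$ into the quadratic form in $L(T)$, the cross term, and the quadratic form in $v_{0,N}$. The cross term is absorbed by the Cauchy--Schwarz bound displayed immediately after (\ref{eq:3-4}), so up to a $1+O(\delta)$ factor $ND(g_N\|g_N^*)$ is bounded below by a constant multiple of $\max\big(L(T)\tp V_{0,N}^{-1}L(T),\ v_{0,N}\tp V_{0,N}^{-1}v_{0,N}\big)$. Now $S(\cdot)\in E_{H,T}$ pins $\tr V_N=\tr D_N=N\delta^2 \asymp \delta^{2-1/H}$, and $\tr V_{0,N}-\tr V_N=\tr D_{0,N}-\tr D_N=O(\delta^2)$. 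If $H>0.5$ the exponent $2-\frac1H$ is positive, so $\tr V_{0,N}\to 0$; since $V_{0,N}$ is positive definite (owing to the training data) its largest eigenvalue tends to $0$, whence $L(T)\tp V_{0,N}^{-1}L(T)\ge \|L(T)\|^2/\lambda_{\max}(V_{0,N})\to\infty$. If $H<0.5$ the exponent is negative, so $\tr D_{0,N}\to\infty$, and the bound $v_{0,N}\tp V_{0,N}^{-1}v_{0,N}=1_d\tp D_{0,N}^{1/2}R_{0,N}^{-1}D_{0,N}^{1/2}1_d\ge \frac1d\tr D_{0,N}$ (valid since $\lambda_{\max}(R_{0,N})\le d$) forces $v_{0,N}\tp V_{0,N}^{-1}v_{0,N}\to\infty$. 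In both cases $ND(g_N\|g_N^*)\to\infty$, and in fact polynomially in $\delta^{-1}$, since $N\asymp \delta^{-1/H}$.

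It then remains to bound $\frac12\log[I_N]$. From $[I_N]=\prod_{n=1}^N |I_n(\alpha_n^*)|/|I_{n-1}(\alpha_n^*)|$ together with $\alpha_n^*\to 0$ one obtains $\log[I_N]\sim\log|V_{0,N}|\le d\log\tr V_{0,N}+O(1)=O(\log N)=O(\log(1/\delta))$, which is of strictly smaller order than the polynomial divergence of $ND(g_N\|g_N^*)$. Substituting into (\ref{eq:3-1}) gives $\log{\cal K}_\delta(T)\to\infty$, i.e.\ ${\cal K}_\delta(T)\to\infty$ as $\delta\to 0$, which is the asserted essential forcing of $H=0.5$ for $S(\cdot)\in E_{A,0,T}^c$.

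The step I expect to be the main obstacle is making these limits uniform in $\delta$: the approximations (\ref{eq:3-1}) and (\ref{eq:3-4}) carry multiplicative $1+O(\delta)$ errors and an additive $O(1)$ error from the last incomplete trading interval $[t_N,T)$, and one must verify that these together with the $O(\log(1/\delta))$ term $\frac12\log[I_N]$ cannot cancel the divergence of the principal term — this is exactly where the polynomial-versus-logarithmic order comparison does the real work. A secondary subtlety occurs in the $H>0.5$ branch, where one needs $L(T)\neq 0$ for $L(T)\tp V_{0,N}^{-1}L(T)$ to blow up rather than stay bounded; this should be extracted (up to a degenerate closed-loop path) from the activity of all $d$ items guaranteed by $S(\cdot)\in E_{A,0,T}^c$.
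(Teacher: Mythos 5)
Your argument is essentially identical to the paper's: the paper proves Theorem \ref{thm:4-1} by exactly this chain — the approximation (\ref{eq:3-1}) justified on $E_T'$, the expansion (\ref{eq:3-4}) with the cross term absorbed by Cauchy--Schwarz, the dichotomy $\tr V_N \asymp \delta^{2-1/H}$ forcing $L(T)\tp V_{0,N}^{-1}L(T)\to\infty$ for $H>0.5$ and $v_{0,N}\tp V_{0,N}^{-1}v_{0,N}\ge\frac{1}{d}\tr D_{0,N}\to\infty$ for $H<0.5$, and the observation that $\frac{1}{2}\log[I_N]$ is of smaller order. Your closing caveat about needing $L(T)\neq 0$ in the $H>0.5$ branch is a legitimate point that the paper itself passes over silently, so you are if anything slightly more careful than the source.
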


\section{Generality of high frequency limit order SOS}
\label{sec:generality}

In this section we show a generality of the high-frequency limit order SOS developed in the previous section, which implies that when
the asset price $S(t)$ follows the vector-valued geometric Brownian motion, our strategy automatically incorporates the well-known constant proportional betting strategy originated with Kelly (\cite{kelly}) and   
yields the likelihood ratio in the Girsanov's theorem for
geometric Brownian motion.  The convergence results in this section
are of measure-theoretic almost everywhere convergence.

When $S(t)$ is subject to the $d$-dimensional geometric Brownian motion with drift vector $\mu$ and non-singular volatility matrix $\sigma$,
\begin{align*}
L(T) = \bigg(\mu-\frac{1}{2}\sigma^2 \bigg)T + \sigma W(T),
\end{align*}
where $W(\cdot)$ denotes the $d$-dimensional standard Brownian motion, and $\sigma^2$ denotes the $d$-dimensional vector with the diagonal elements of $\sigma\sigma\tp$. 
In this section we let $T \rightarrow\infty$ and also let $\delta=\delta_T \rightarrow 0$
in such a way that $|\log \delta_T|= o(\sqrt{T})$.
We  have
\begin{align*}
V_{0,N} = (\sigma \sigma\tp)T (1 + O(\delta_T)),
\end{align*}
and hence we can evaluate
\begin{align}
\label{eq:5-1b}
\alpha_{N}^* = \Big[(\sigma \sigma\tp)^{-1}{\mu}+ 
\frac{(\sigma^{-1})\tp W(T)}{T}\Big](1 + O(\delta_T)).
\end{align}
The first term in the right-hand side of (\ref{eq:5-1b}) is the constant vector, which is derived also from the so-called Kelly criterion of maximizing  $E[\log {\cal K}(T)]$. 
 
Next consider $ND(g_{N}\| g_{N}^*)$ in (\ref{eq:3-1}), which was also   indicated by (\ref{eq:3-4}),
\begin{align*}
ND(g_N \| g_N^*) &= \frac{1}{2}\alpha_N^{*\textrm{t}}V_{0,N} \alpha_N^* 
(1 + O(\delta_T))\\
&= \Big[\frac{T}{2}\mu \tp (\sigma\sigma\tp)^{-1}\mu  
+ \frac{1}{2}\big((\sigma^{-1}\mu)\tp W(T) + W(T)\tp (\sigma^{-1}\mu)\big)\Big](1 + O(\delta_T)).
\end{align*}
The log capital (\ref{eq:3-1}) is then expressed as
\begin{align*}
\log {\cal K}_{\delta_T}(T) &= \Big[\frac{1}{2}\big((\sigma^{-1}\mu)\tp W(T) + W(T)\tp (\sigma^{-1}\mu)\big) + \frac{T}{2}\mu\tp (\sigma \sigma\tp)^{-1}\mu\\
&\qquad - \frac{1}{2}\log T + \log \delta_T\Big](1 + O(\delta_T)) + O(1).
\end{align*}
Hence 
the main terms on the right-hand side 
\begin{align*}
-\log {\cal K}(T) = -\frac{1}{2}\big((\sigma^{-1}\mu)\tp W(T) + W(T)\tp (\sigma^{-1}\mu)\big) - \frac{T}{2}\mu\tp (\sigma \sigma\tp)^{-1}\mu
 + o(\sqrt{T})
\end{align*}
provide the likelihood ratio of the unique martingale measure known as the Girsanov's theorem in multiple assets case, and we obtain
\begin{align}
\label{eq:5-1}
\lim_{T\to \infty}\frac{\log {\cal K}(T)}{T} = 
\frac{1}{2}\mu\tp (\sigma \sigma\tp)^{-1}\mu.
\end{align}

Finally we discuss mutual information quantities  among subgames of the
multi-dimensional bounded forecasting game.
Let us denote the quadratic form in the right-hand side of (\ref{eq:5-1}) by
\begin{align}
\label{eq:5-2}
Q(S) = Q(S^1, \dots, S^d) = \frac{1}{2}\mu\tp (\sigma \sigma\tp)^{-1}\mu,
\end{align}
which designates the optimal exponential growth rate of Investor's capital process with $d$ joint betting items $S = (S^1, \dots, S^d)$. We partition $S$ into the following form
\begin{align*}
S_{[1]} = (S^{j_1}, \dots, S^{j_{k_1}}),\ 
S_{[2]} = (S^{j_{k_1+1}}, \dots, S^{j_{k_2}}),\ \dots,\  
S_{[m]} = (S^{j_{k_{m-1}+1}}, \dots, S^{j_{k_m}}),
\end{align*}
and assume that Investor is allowed to trade the above $m$ groups of joint sub-betting items successively during the one period of the $d$ joint trading. 
Then the corresponding optimal exponential growth rate of Investor's capital process becomes 
\begin{align}
\label{eq:5-3}
Q(S_{[1]}) + Q(S_{[2]}) + \cdots + Q(S_{[m]}).
\end{align}
Note that among (\ref{eq:5-2}) and (\ref{eq:5-3}) for all possible partitions
there is no general dominance relations
and this
argument leads to the notion of mutual information quantity between betting games, which will be treated in a forthcoming paper.

\section{Numerical examples}
\label{sec:numerical}

In this section we give some numerical examples on the stock price
data from the Tokyo Stock Exchange. The data are daily closing 
prices from January 4th in 2000 to March 31st in 2006 for several Japanese companies listed on the first section of the TSE. There are $T = 1536$ daily closing prices.

{}From this data we construct the bounded forecasting game in the following manner. At first the daily returns  
$s_t^j = (S_{t+1}^j - S_t^j)/S_t^j,\ t = 1, \dots, T-1,\ j = 1, \dots, d$ of $d$ items are transformed to $[-1,1]$ by
\begin{align*}
z_t^j = \frac{2s_t^j - {\bar s}_t^j - \underline{s}_t^j}
{{\bar s}_t^j - \underline{s}_t^j}\in [-1, 1],\quad {\bar s}_t^j = 
\max_{1\le t \le T-1}s_t^j,\ \ 
\underline{s}_t^j = \min_{1\le t \le T-1}s_t^j.
\end{align*}
Next $2^d$ training data $\tilde{z}_t = (\pm 1, \dots,\pm1)\tp,\ t = 1, \dots 2^d$, and a forecasting time $F = cT,\ 0 < c < 1$ are  prepared, and forecasting value
for the $j$-th component is 
\begin{align*}
\rho^j = \frac{1}{2^d+F}\Big(\sum_{t=1}^{2^d} \tilde{z}_t^j
+ \sum_{t=1}^F z_t^j\Big),\quad j = 1, \dots, d.
\end{align*}
Then the bounded variables $x_n = (x_n^1, \dots, x_n^d)\tp$ in the protocol are introduced as 
\begin{align*}
x_n^j = 
\begin{cases}
\tilde{z}_n^j - \rho^j, & 1 \le n \le 2^d \\
z_{n-2^d+F}^j - \rho^j, & 2^d < n \le N = 2^d+T-1-F.\ 
\end{cases}
\end{align*}

Figures 1-5 and Figures 6-10 exhibit the cases of three items Takeda, Toyota, Kirin with $F = 0.17T$ and $F = 0.25T$, respectively. The notations in the figures are as follows and their final values at the end of round $N$ are indicated in the figures.
\begin{align*}
&K_n^0 = \bar{{\cal K}}_n^* = \exp (nD(g_n\|g_n^*)),\quad 
K_n^1 = {\cal K}_n^*,\quad 
K_n^2 = \frac{\bar{{\cal K}}_n^*}{\sqrt{[I_n]}},\\
&LK_n^0 = \log \bar{{\cal K}}_n^* = nD(g_n\|g_n^*),\quad 
LK_n^1 = \log {\cal K}_n^*,\quad 
LK_n^2 = nD(g_n\|g_n^*) - \frac{1}{2}\log\ [I_n],\\
&LD_n^1 = \log \bar{{\cal K}}_n^* - \log {\cal K}_n^*,\quad 
LD_n^2 = \frac{1}{2}\log\ [I_n],\quad 
LD_n^3 = \frac{3}{2}\log n,\\
&GR_n = D(g_n\|g_n^*),\quad 
QR_n = 
\frac{1}{2}{\bar x}_n\tp {\bar V}_n^{*-1}{\bar x}_n,\quad 
DR_n = \frac{\log\ [I_n]}{2n}.
\end{align*}
As suggested in Section \ref{subsec:approximation}, $K_n^1$ and $K_n^2$,
$LK_n^1$ and $LK_n^2$, $LD_n^1$ and $LD_n^2$ are almost overlapped in the figures. We can also see that the actual log deficiency 
$LD_n^1$ or $LD_n^2$ is far less than $LD_3$ which is the typical log  deficiency in the case of finite items such as in the horse race game. 
Furthermore Figures 5,10 show that the deficiency rate process $DR_n$ gives the precise convergence border rate for the growth rate process $GR_n$ or its approximated quadratic rate process $QR_n$. 
 
Figures 11-16 illustrate the cases of composite games
\begin{align*} 
Game (1) \subset Game (2) \subset Game (3) \subset Game (4) \subset Game (5)
\end{align*}
with five items 1.\ Takeda, 2.\ Toyota, 3.\ Kirin, 4.\ Tepco, 5.\ NNK in this order. 
As expected the following trade-off can be seen in the figures. 
\begin{align*}
&LK_n^0\ :\ G(1)\  < G(2)\  < G(3)\  < G(4)\  < G(5)\ ,\\
&LD_n^2\ :\ G(1)\ < G(2)\  < G(3)\  < G(4)\  < G(5)\ ,\\
&LK_n^1\ :\ G(1)\  < G(5)\  < G(2)\  < G(4)\  < G(3)\ .
\end{align*}
Hence the choice of the three items 1.\ Takeda, 2.\ Toyota, 3.\ Kirin 
is the most profitable one in the above composite games. 

Figures 17-20 compare the sequential optimizing strategy with the universal portfolio for one item Takeda, Toyota, Kirin, an imaginary data, respectively. The universal portfolio in its simplest form with one item can be performed in the following way. 

Divide the closed interval $A = \{\alpha \in \mathbb{R} \mid 
1+\alpha x \ge 0, \forall x\in D\}$
of prudent strategies 
into disjoint subintervals $A_1,\dots, A_M$. Then for the $m$-th
account $A_m$ with the initial capital ${\cal K}_0^{(m)} = 1/M$,
Skeptic continues the game with constant betting ratio $\alpha_m \in
A_m,\ m = 1, \dots, M$. His capital at the end of round $n$ is
expressed as ${\cal K}_n^U = \sum_{m=1}^M {\cal K}_n^{(m)}$. The
figures are the cases with $M = 100$ and the notations are
\begin{align*}
&K_n^{U0} = {\cal K}_n^U\ \textrm{without the training data}\ \{-1,\ 1\},\\
&K_n^{U1} = {\cal K}_n^U\ \textrm{with the training data}\ \{-1,\ 1\}.
\end{align*}
Figures 20-22 show the case of an imaginary data given by
\begin{align*}
x_{-1} = -1,\quad  x_0 = 1,\quad x_n = \frac{1}{n+1},\quad n = 1, \dots, 2000.
\end{align*}
In this case $LK_n^1 \sim a\log n - c,\ 0 < a < 1,\ c > 0$, which contrasts with the case of coin-tossing game  
$LK_n^1 \sim nD({\bar x}_n\|\rho) - \frac{1}{2}\log n$.

Figures 17-20 suggest that there is no general superiority between the sequential optimizing strategy and the universal portfolio. 

\begin{figure}[htbp]
\begin{minipage}{.5\linewidth}
\begin{flushleft}
Figure 1 : Closing prices of Takeda, Toyota, \\
\qquad \qquad \quad Kirin\quad $F = 0.17T$\\
Figure 2 : Capital processes\\
\qquad \qquad \qquad \qquad $K_n^0,\ K_n^1,\ K_n^2$\\
Figure 3 : Log capital processes\\
\qquad \qquad \qquad \qquad $LK_n^0,\ LK_n^1,\ LK_n^2$\\
Figure 4 : Log deficiency processes\\
\qquad \qquad \qquad \qquad $LD_n^1,\ LD_n^2,\ LD_n^3$\\
Figure 5 : Rate processes \\
\qquad \qquad \qquad \qquad $GR_n,\ QR_n,\ DR_n$
\end{flushleft}
\end{minipage}
\begin{minipage}{.5\linewidth}
\begin{center}
\includegraphics[width=8cm,height=7cm]{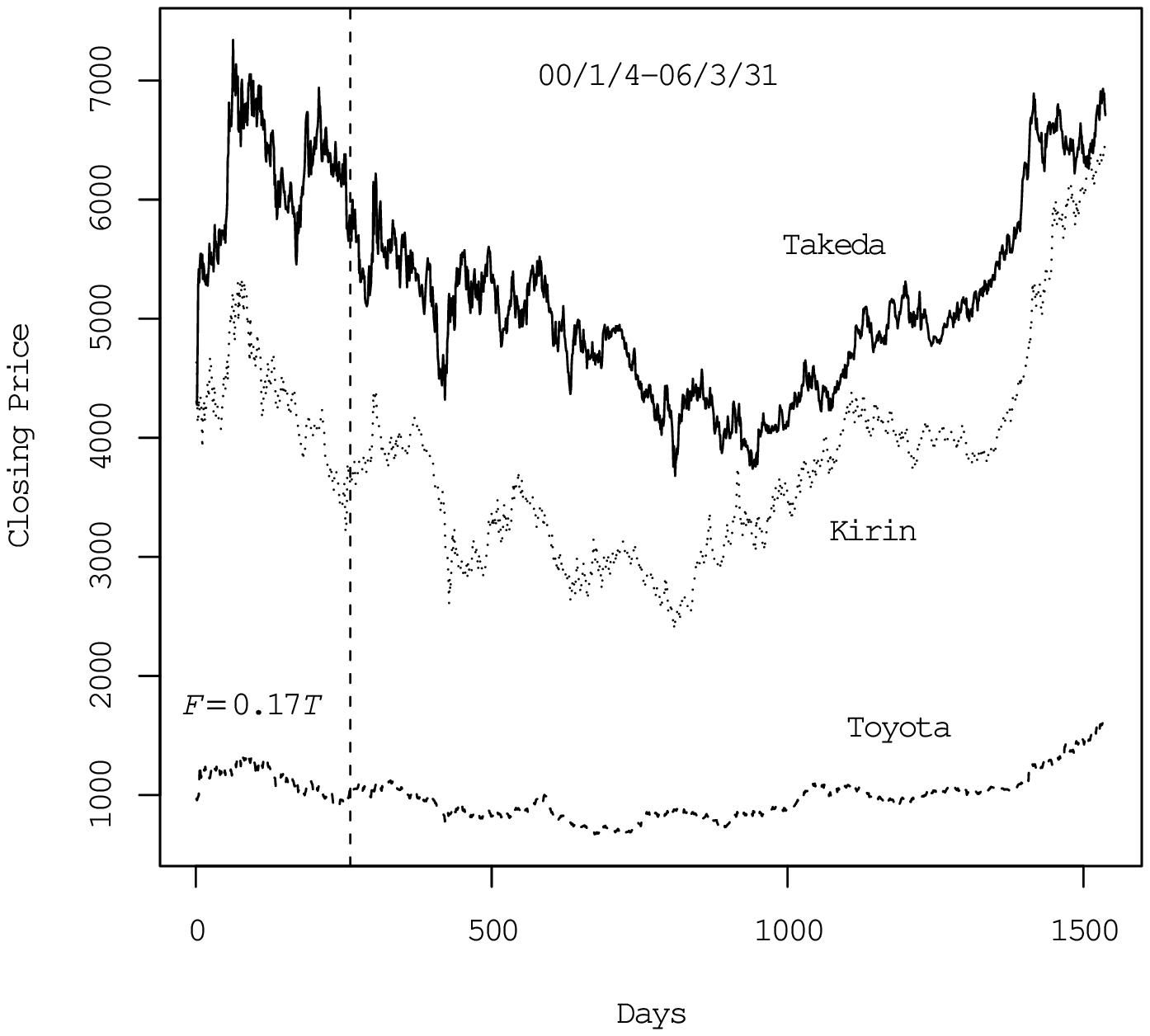}
\vspace*{-11mm}   
\caption{Closing prices}
\label{fig:1-1}
\end{center}
\end{minipage}
\begin{minipage}{.5\linewidth}
\begin{center}
\includegraphics[width=8cm,height=7cm]{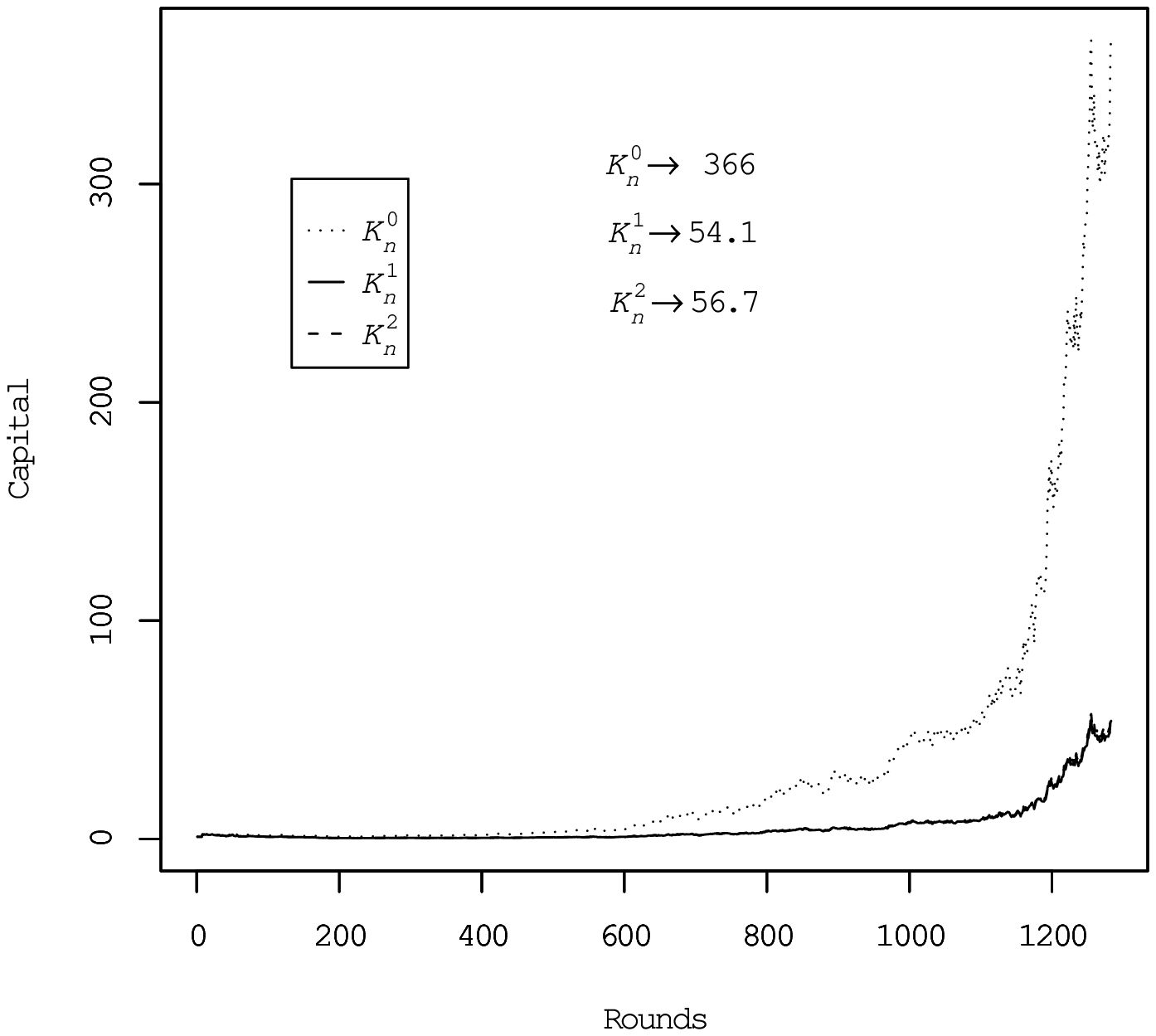}
\vspace*{-11mm}   
\caption{Capital processes}
\label{fig:1-2}
\end{center}
\end{minipage}
\begin{minipage}{.5\linewidth}
\begin{center}
\includegraphics[width=8cm,height=7cm]{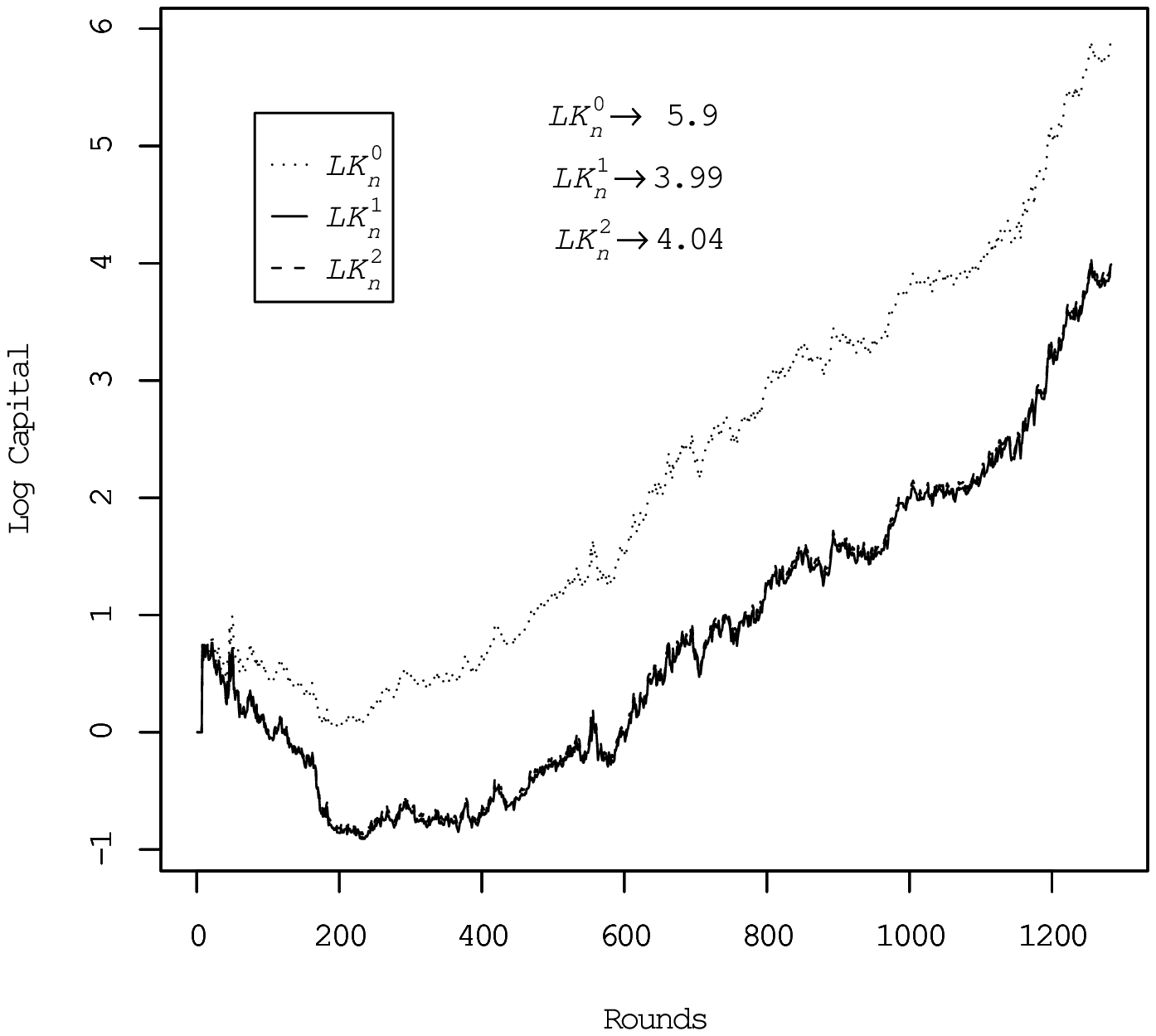}
\vspace*{-11mm}   
\caption{Log capital processes}
\label{fig:1-3}
\end{center}
\end{minipage}
\begin{minipage}{.5\linewidth}
\begin{center}
\includegraphics[width=8cm,height=7cm]{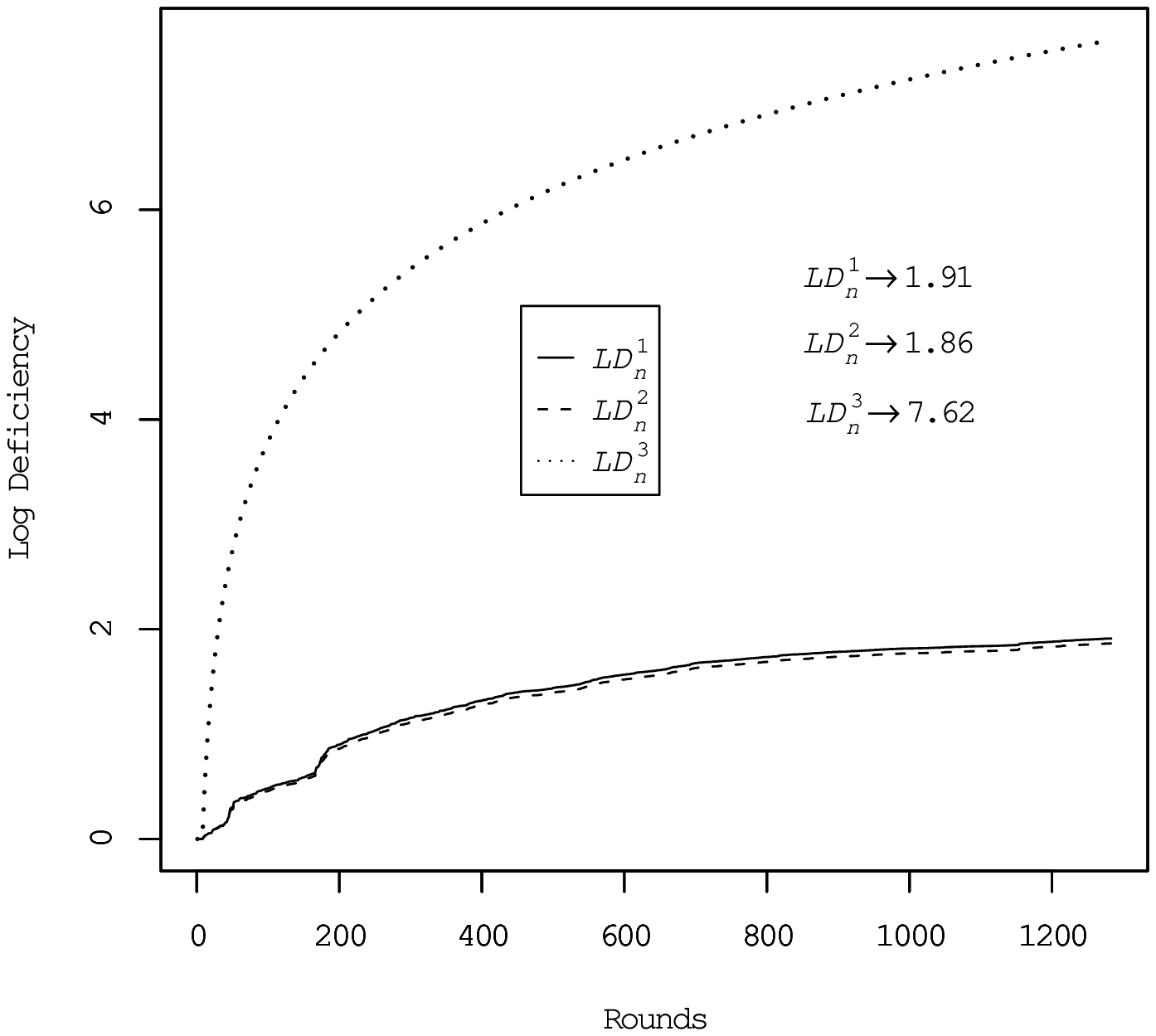}
\vspace*{-11mm}   
\caption{Log deficiency processes}
\label{fig:1-4}
\end{center}
\end{minipage}
\begin{minipage}{.5\linewidth}
\begin{center}
\includegraphics[width=8cm,height=7cm]{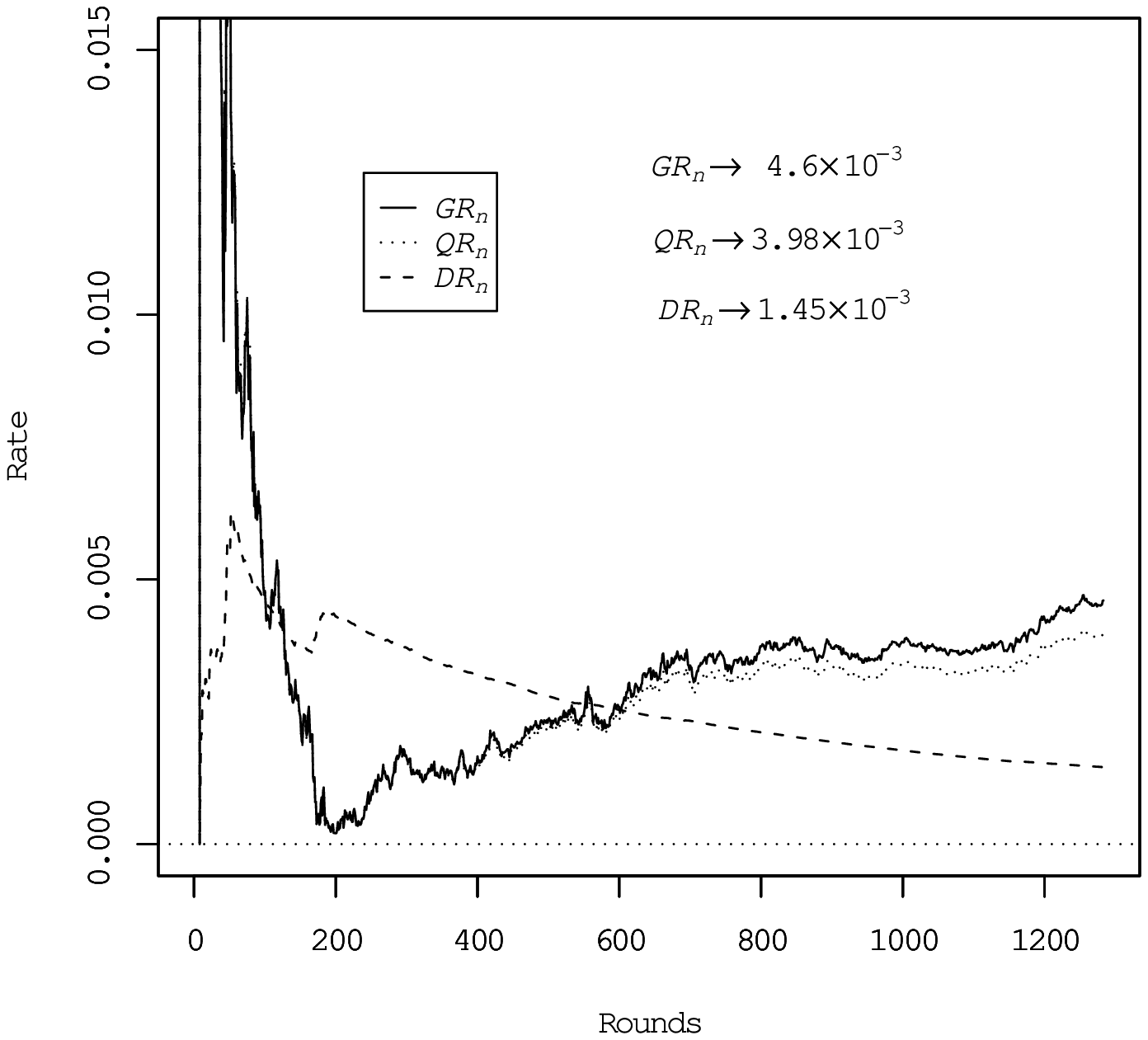}
\vspace*{-11mm}   
\caption{Rate processes}
\label{fig:1-5}
\end{center}
\end{minipage}
\end{figure}


\begin{figure}[htbp]
\begin{minipage}{.5\linewidth}
\begin{flushleft}
Figure 6 : Closing prices of Takeda, Toyota, \\
\qquad \qquad \quad Kirin\quad $F = 0.25T$\\
Figure 7 : Capital processes\\
\qquad \qquad \qquad \qquad $K_n^0,\ K_n^1,\ K_n^2$\\
Figure 8 : Log capital processes\\
\qquad \qquad \qquad \qquad $LK_n^0,\ LK_n^1,\ LK_n^2$\\
Figure 9 : Log deficiency processes\\
\qquad \qquad \qquad \qquad $LD_n^1,\ LD_n^2,\ LD_n^3$\\
Figure 10 : Rate processes \\
\qquad \qquad \qquad \qquad $GR_n,\ QR_n,\ DR_n$
\end{flushleft}
\end{minipage}
\begin{minipage}{.5\linewidth}
\begin{center}
\includegraphics[width=8cm,height=7cm]{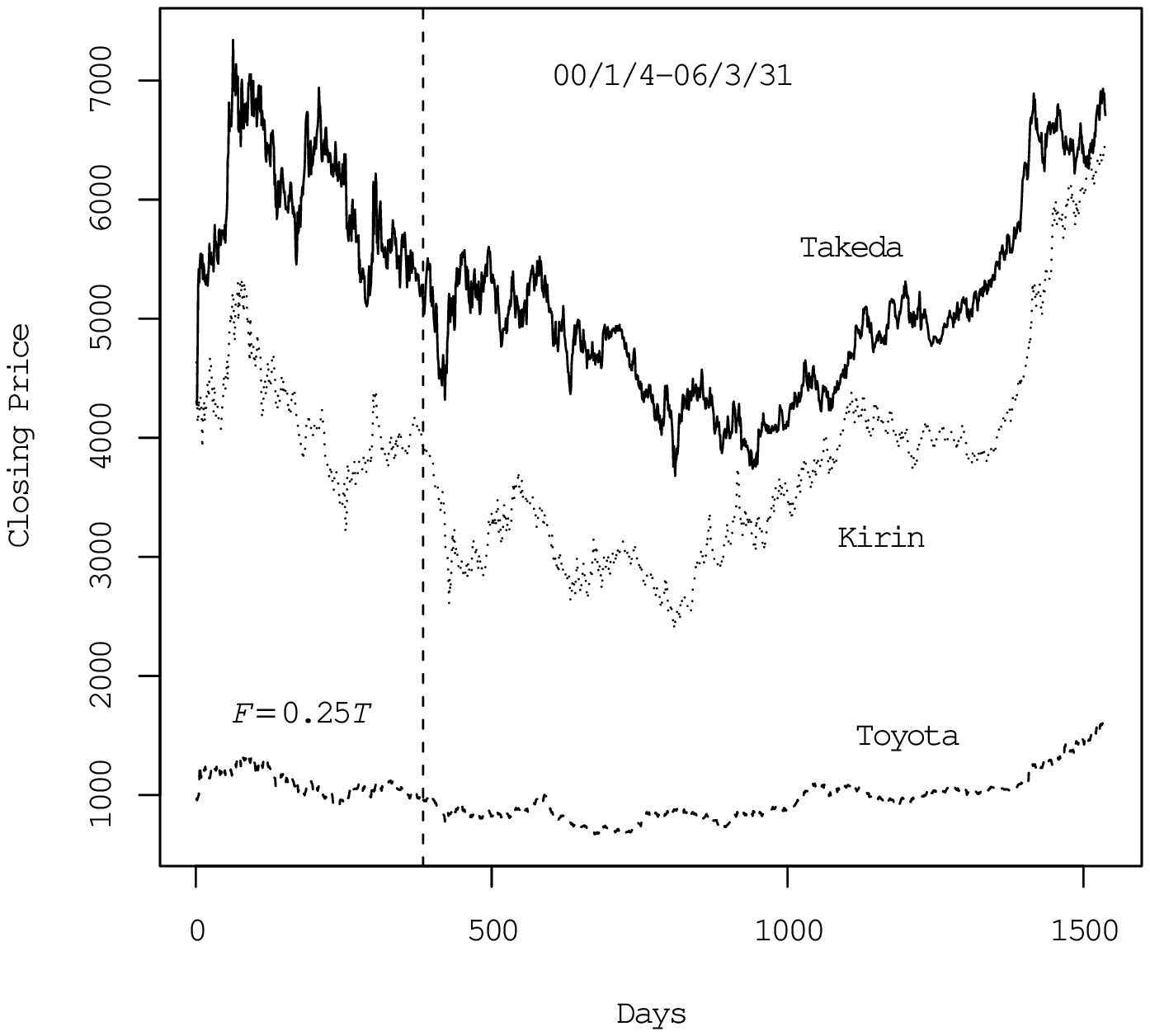}
\vspace*{-11mm}   
\caption{Closing prices}
\label{fig:1-6}
\end{center}
\end{minipage}
\begin{minipage}{.5\linewidth}
\begin{center}
\includegraphics[width=8cm,height=7cm]{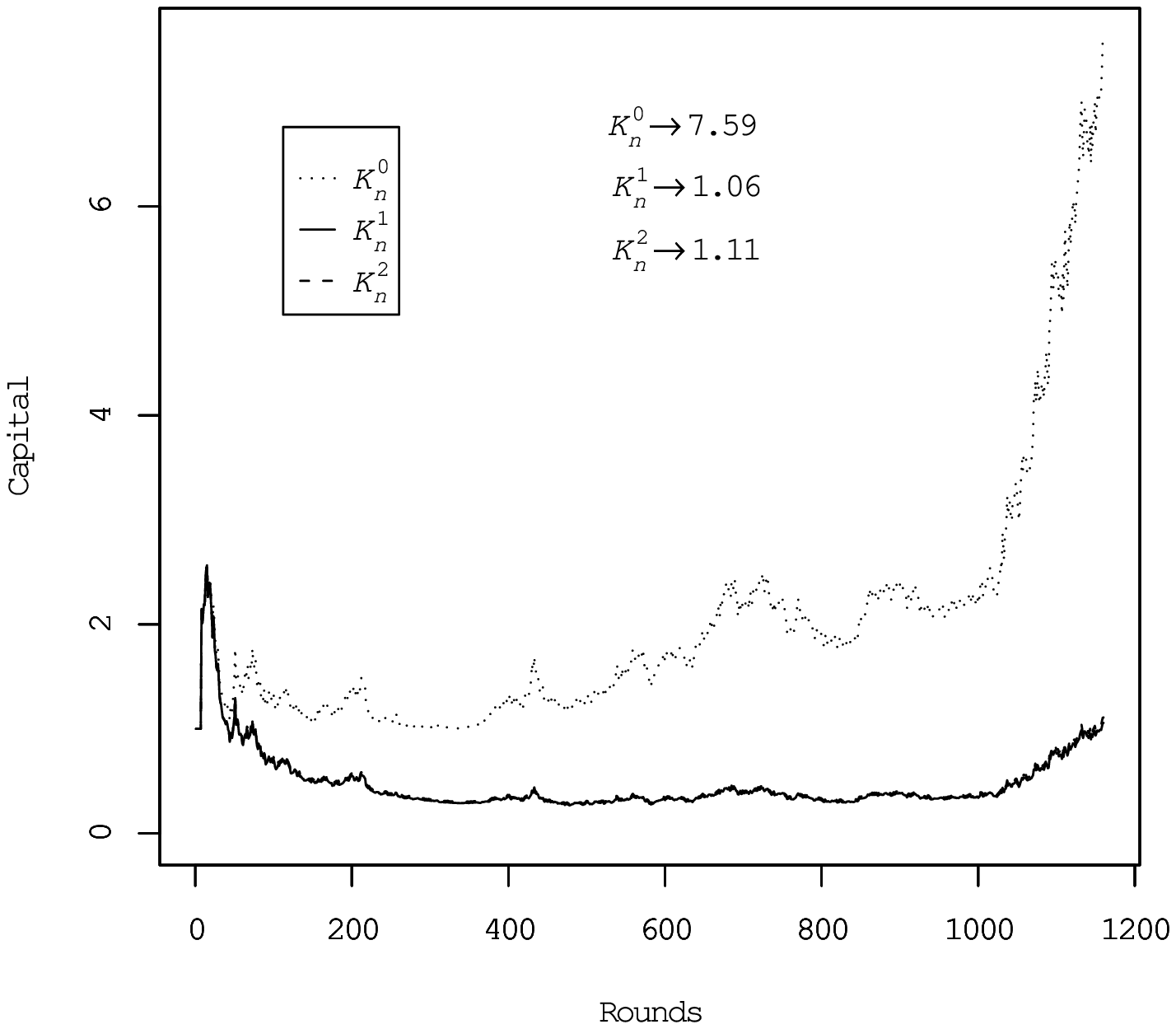}
\vspace*{-11mm}   
\caption{Capital processes}
\label{fig:1-7}
\end{center}
\end{minipage}
\begin{minipage}{.5\linewidth}
\begin{center}
\includegraphics[width=8cm,height=7cm]{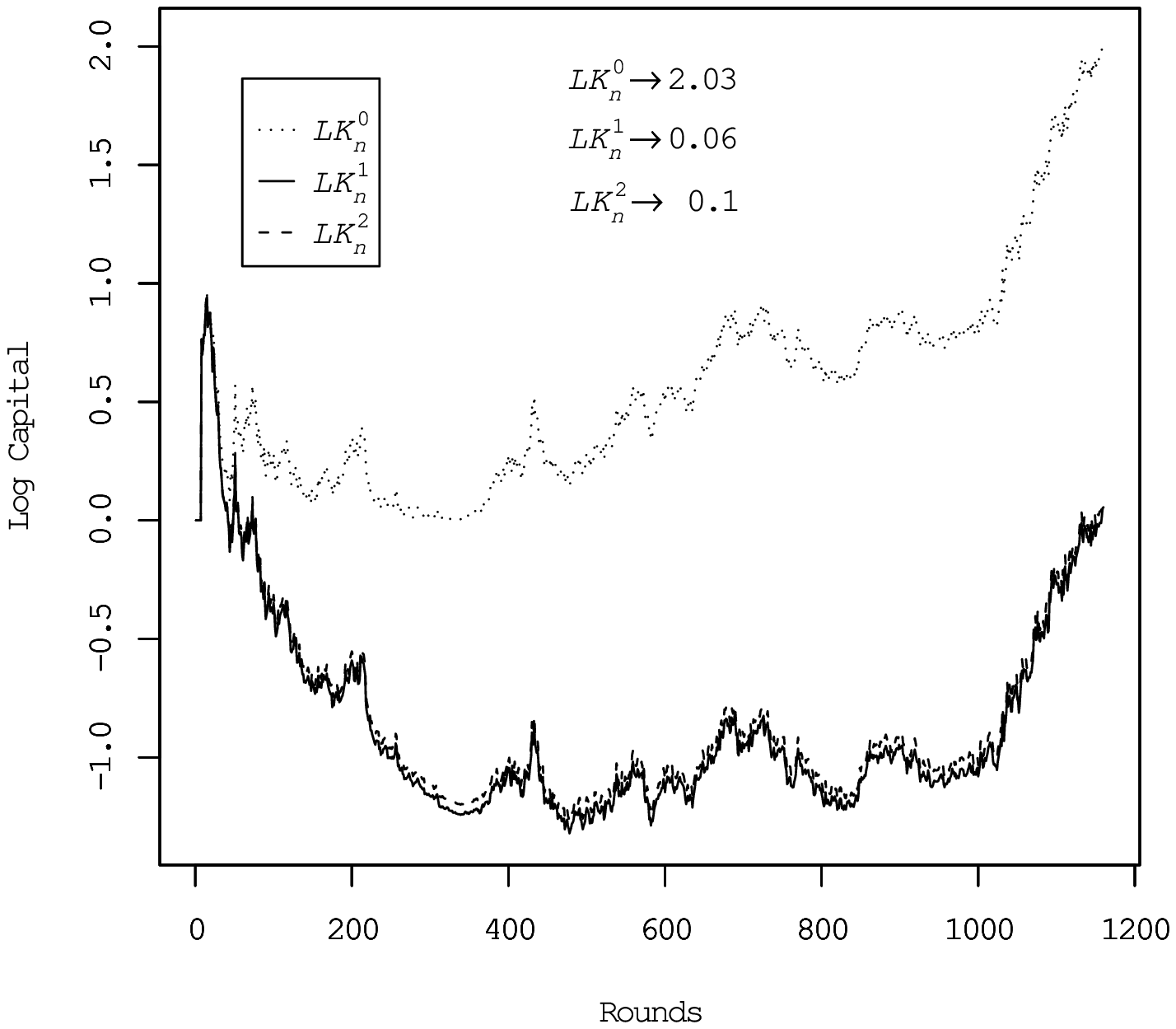}
\vspace*{-11mm}   
\caption{Log capital processes}
\label{fig:1-8}
\end{center}
\end{minipage}
\begin{minipage}{.5\linewidth}
\begin{center}
\includegraphics[width=8cm,height=7cm]{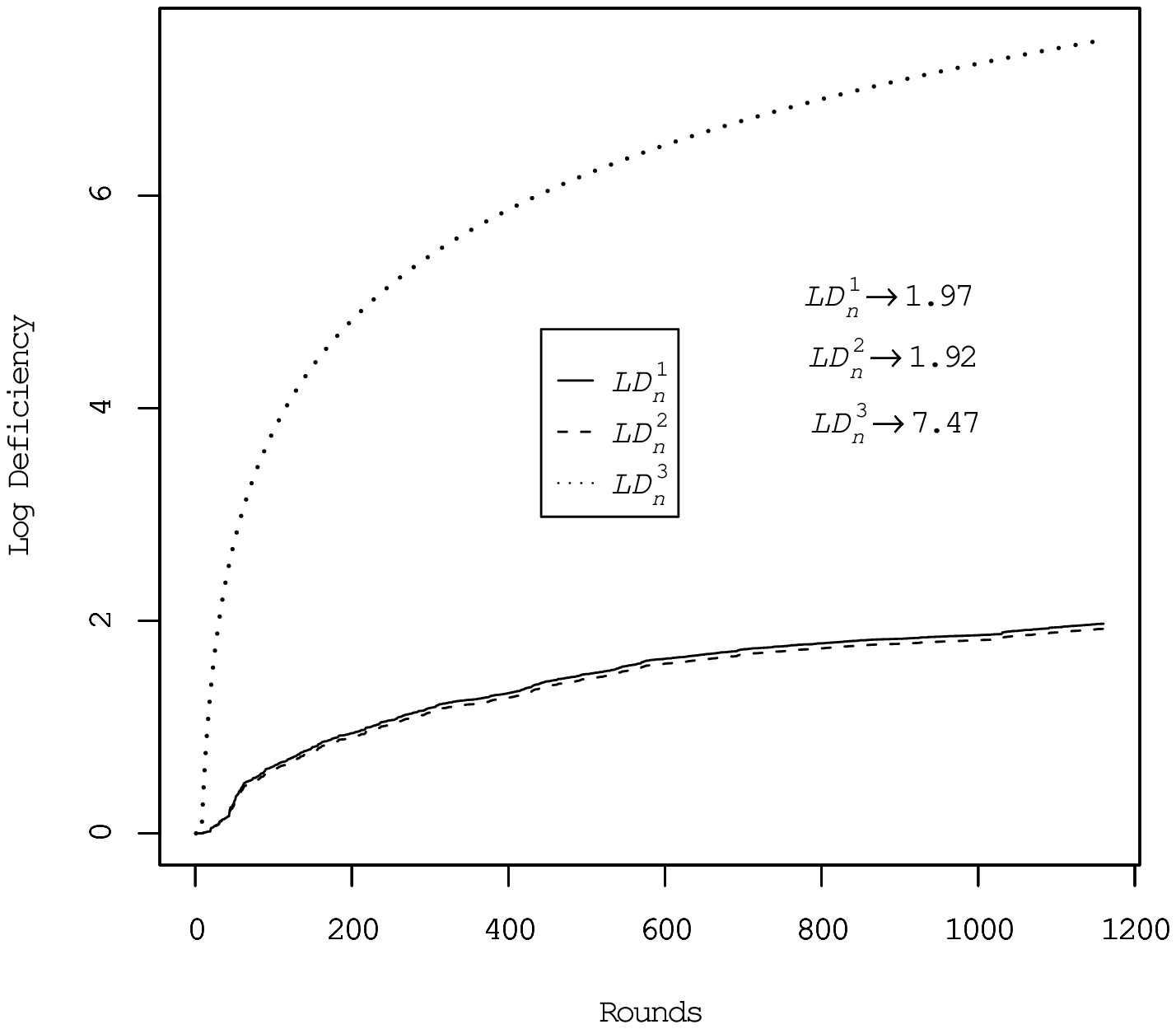}
\vspace*{-11mm}   
\caption{Log deficiency processes}
\label{fig:1-9}
\end{center}
\end{minipage}
\begin{minipage}{.5\linewidth}
\begin{center}
\includegraphics[width=8cm,height=7cm]{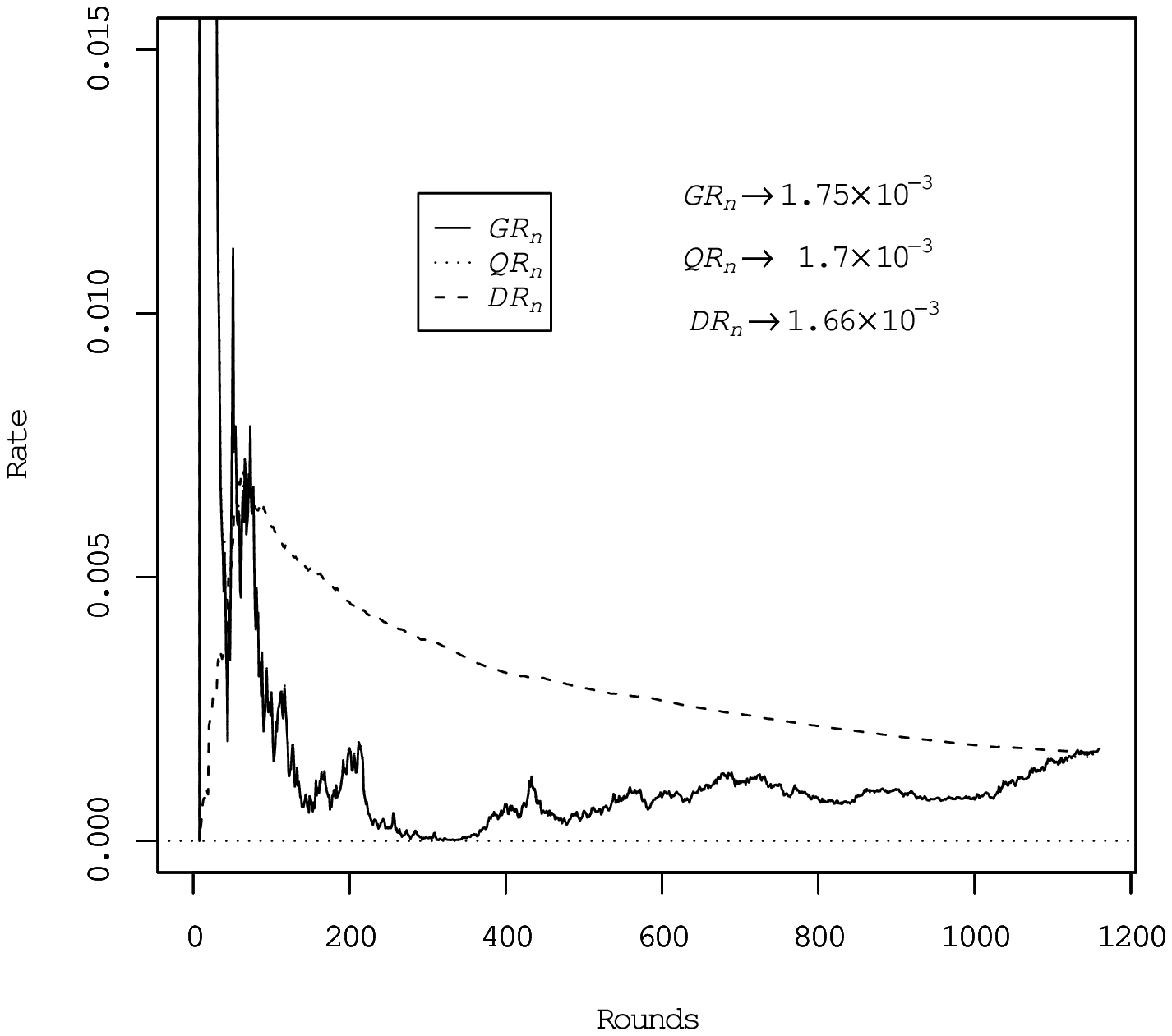}
\vspace*{-11mm}   
\caption{Rate processes}
\label{fig:1-10}
\end{center}
\end{minipage}
\end{figure}


\begin{figure}[htbp]
\begin{minipage}{.5\linewidth}
\begin{center}
\includegraphics[width=8cm,height=7cm]{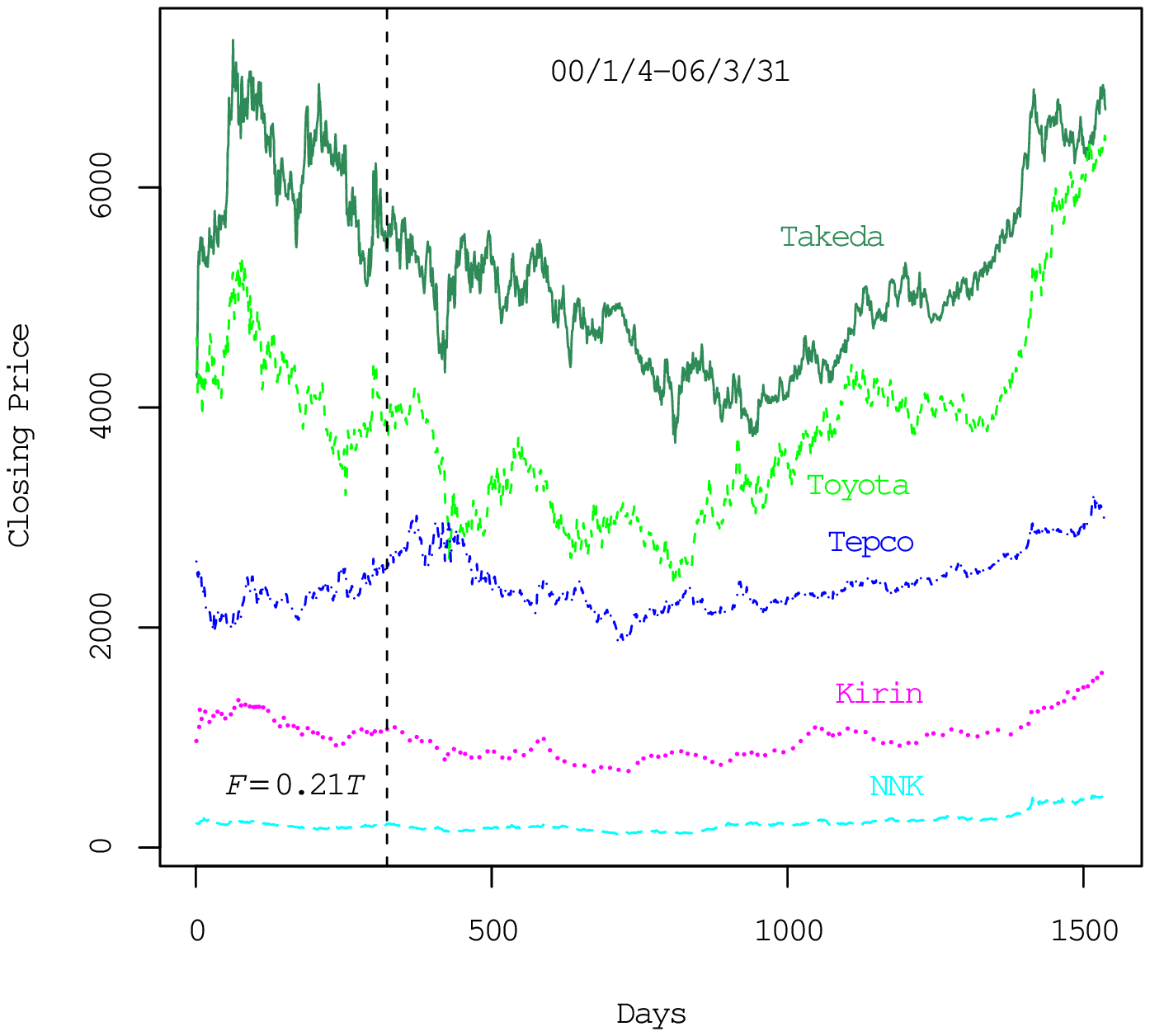}
\vspace*{-11mm}   
\caption{Closing prices}
\label{fig:1-11}
\end{center}
\end{minipage}
\begin{minipage}{.5\linewidth}
\begin{center}
\includegraphics[width=8cm,height=7cm]{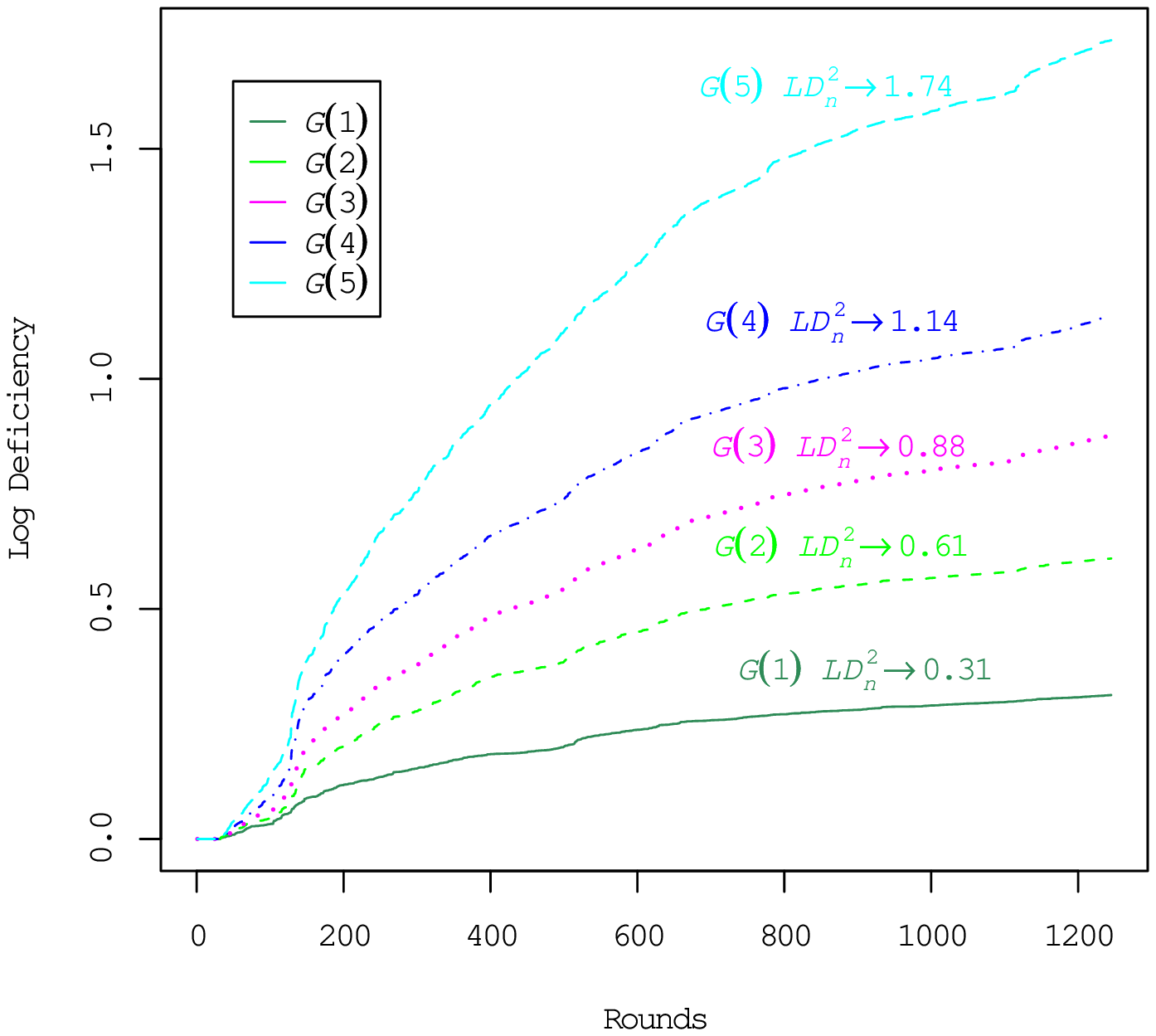}
\vspace*{-11mm}   
\caption{Log deficiency processes $LD_n^2$}
\label{fig:1-12}
\end{center}
\end{minipage}
\begin{minipage}{.5\linewidth}
\begin{center}
\includegraphics[width=8cm,height=7cm]{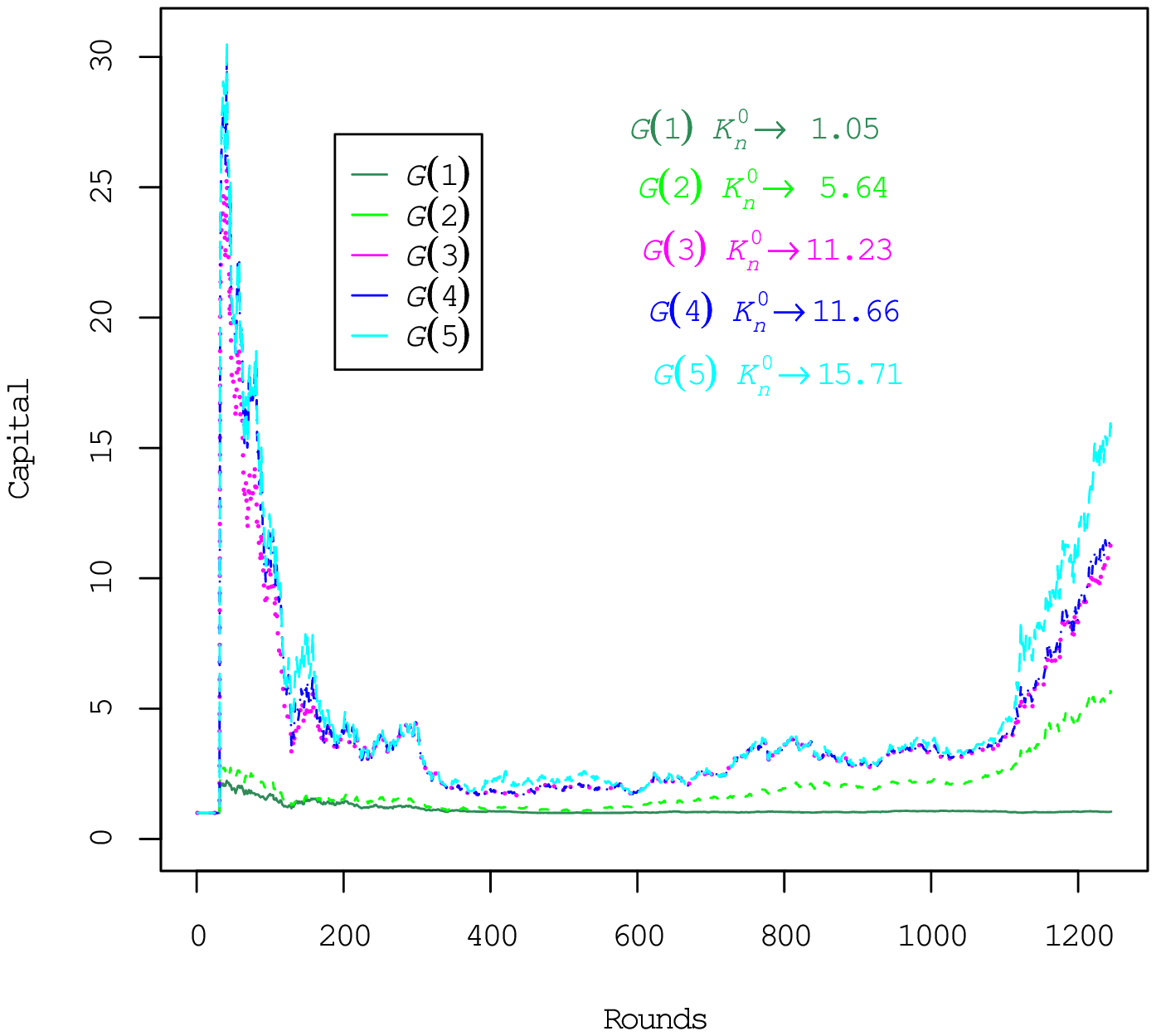}
\vspace*{-11mm}   
\caption{Capital processes $K_n^0$}
\label{fig:1-13}
\end{center}
\end{minipage}
\begin{minipage}{.5\linewidth}
\begin{center}
\includegraphics[width=8cm,height=7cm]{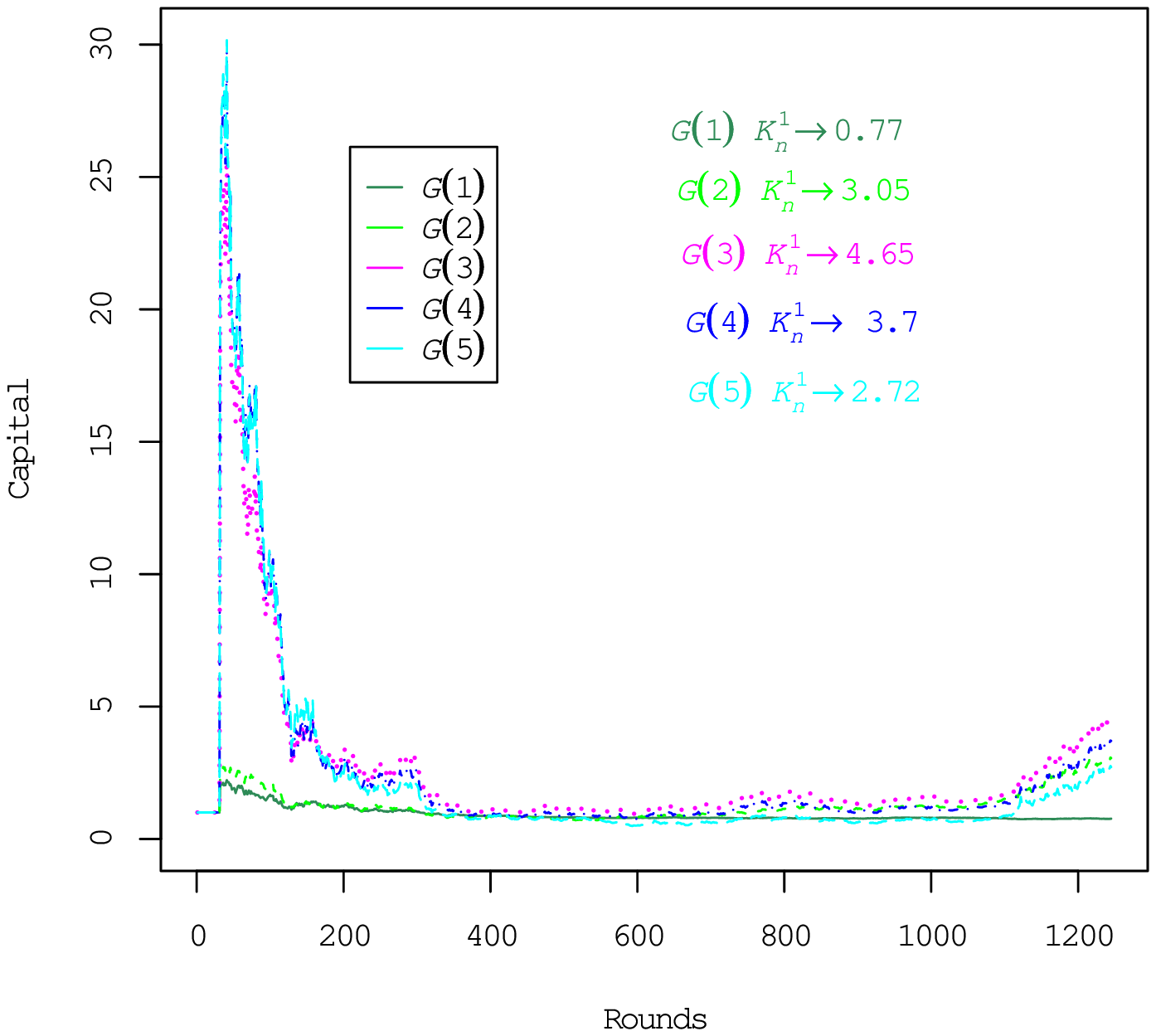}
\vspace*{-11mm}   
\caption{Capital processes $K_n^1$}
\label{fig:1-14}
\end{center}
\end{minipage}
\begin{minipage}{.5\linewidth}
\begin{center}
\includegraphics[width=8cm,height=7cm]{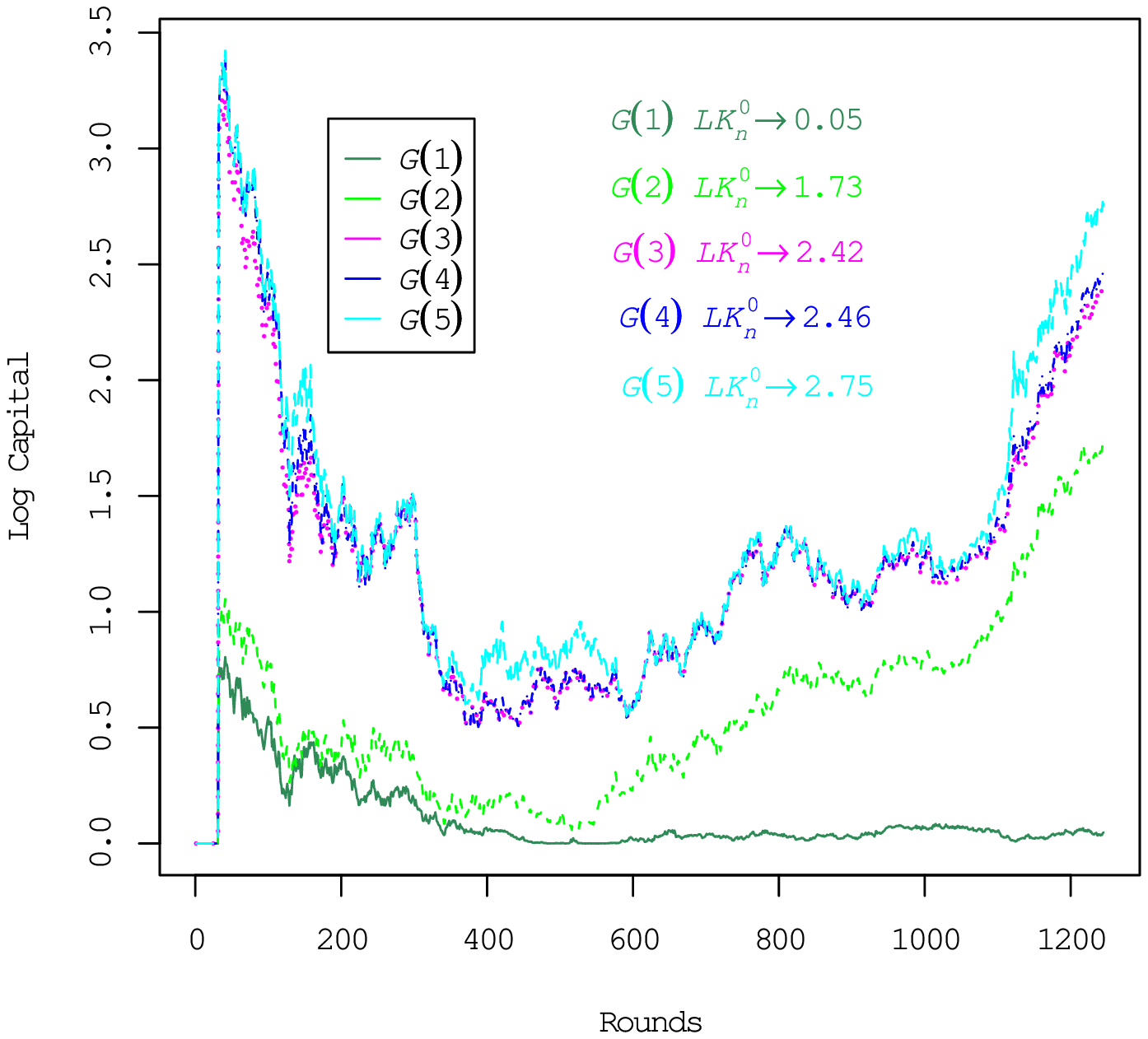}
\vspace*{-11mm}   
\caption{Log capital processes $LK_n^0$}
\label{fig:1-15}
\end{center}
\end{minipage}
\begin{minipage}{.5\linewidth}
\begin{center}
\includegraphics[width=8cm,height=7cm]{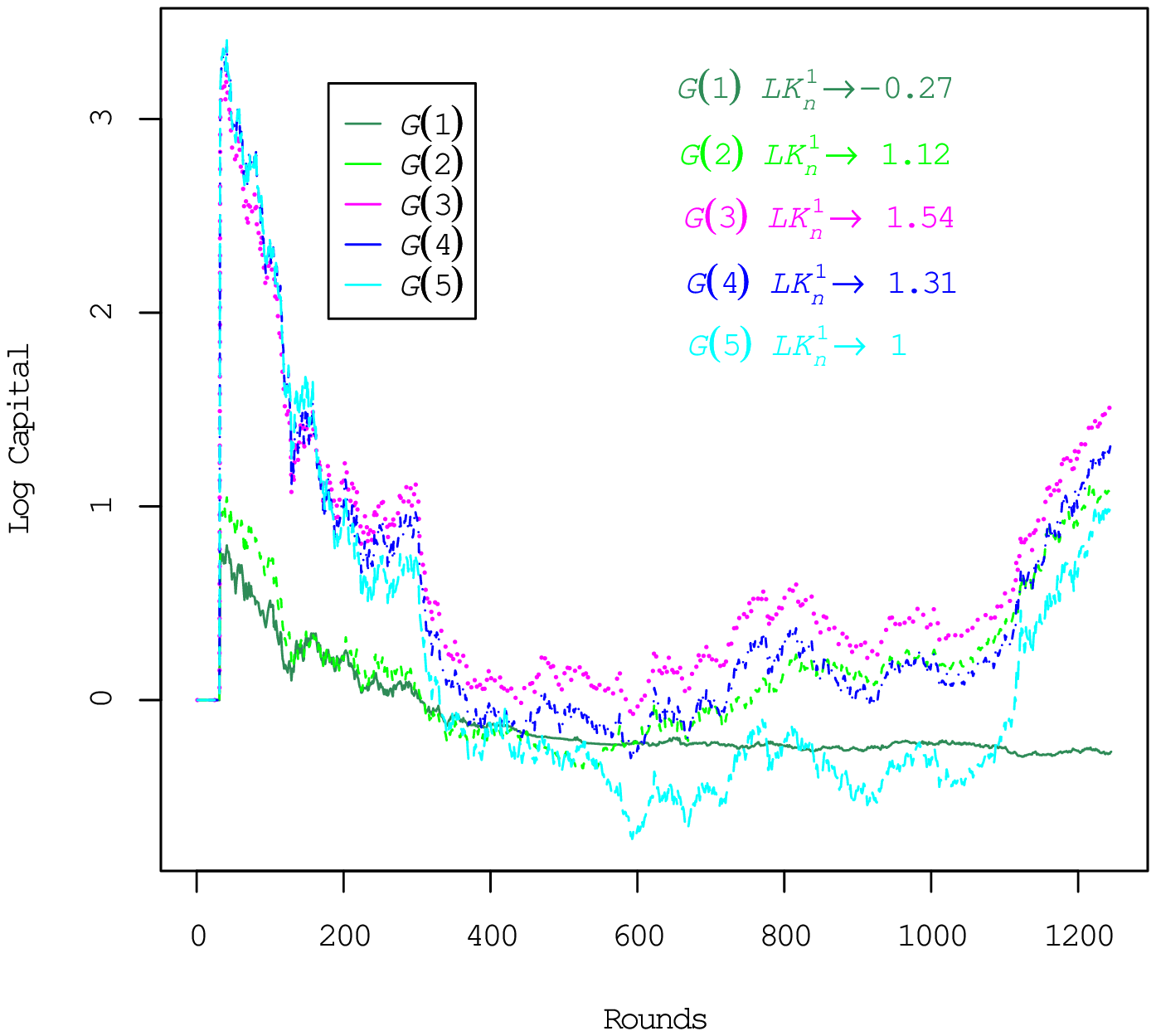}
\vspace*{-11mm}   
\caption{Log capital processes $LK_n^1$}
\label{fig:1-16}
\end{center}
\end{minipage}
\end{figure}


\begin{figure}[htbp]
\begin{minipage}{.5\linewidth}
\begin{center}
\includegraphics[width=8cm,height=7cm]{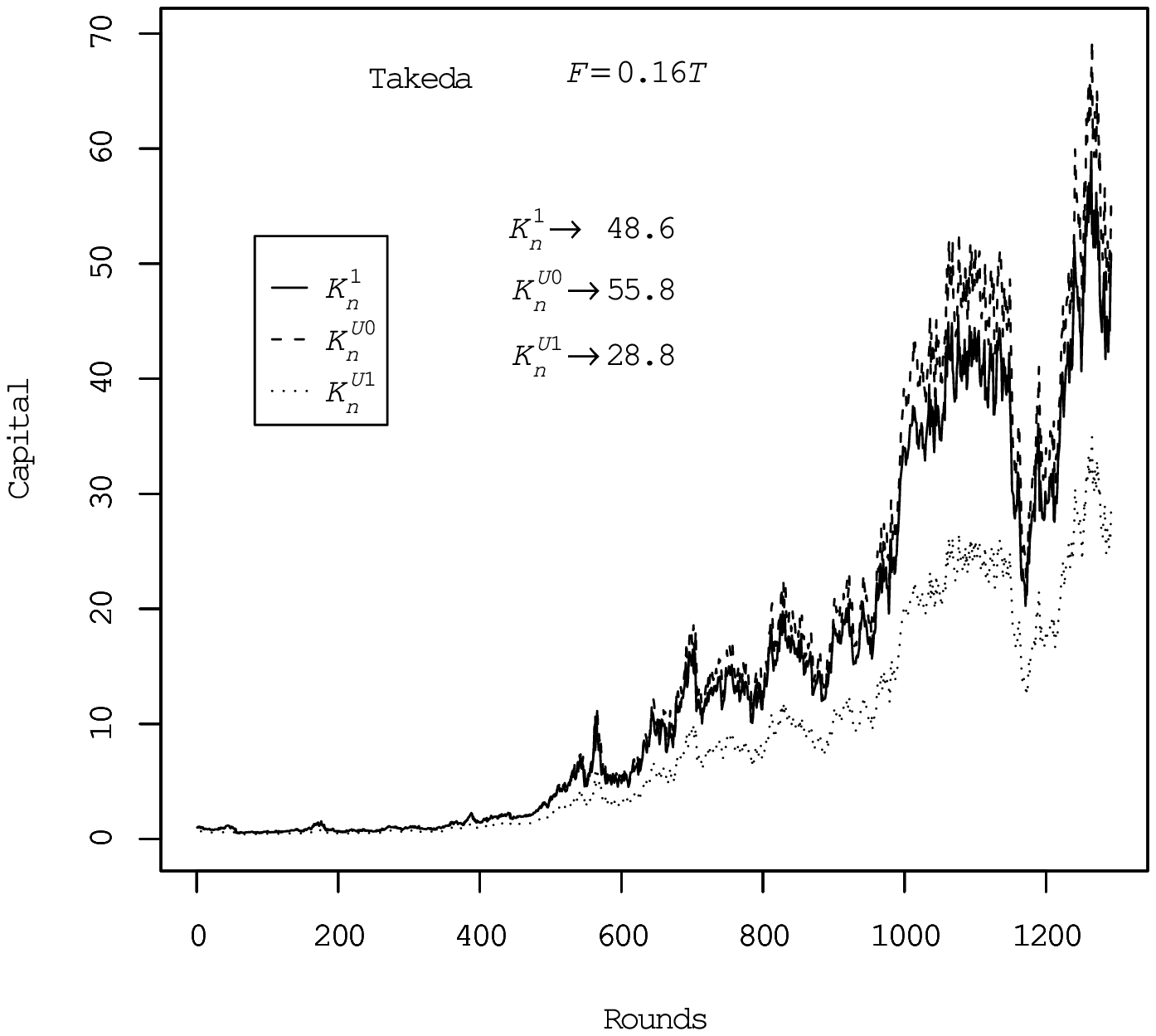}
\vspace*{-11mm}   
\caption{$K_n^1,\ K_n^{U0},\ K_n^{U1}$}
\label{fig:1-17}
\end{center}
\end{minipage}
\begin{minipage}{.5\linewidth}
\begin{center}
\includegraphics[width=8cm,height=7cm]{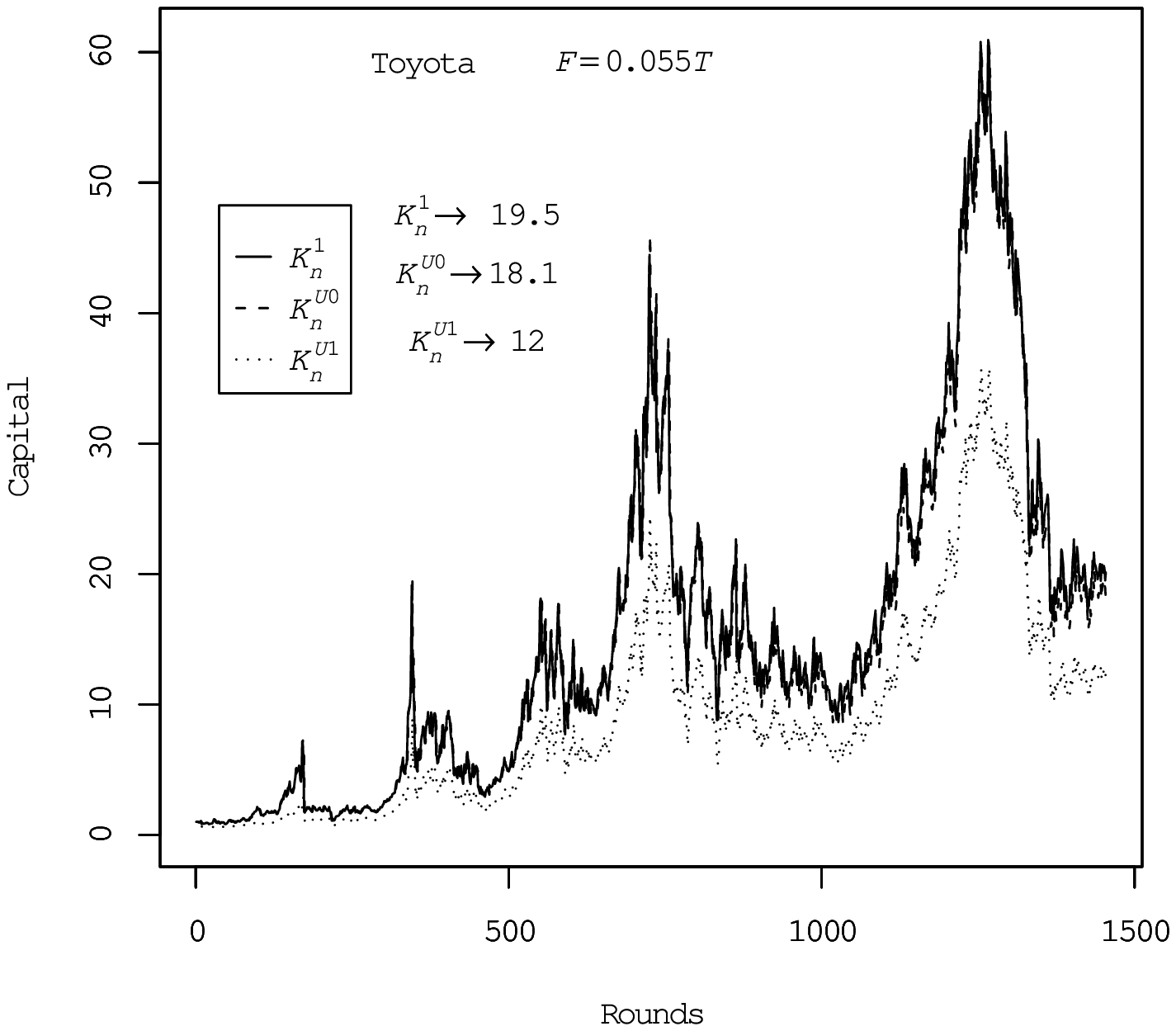}
\vspace*{-11mm}   
\caption{$K_n^1,\ K_n^{U0},\ K_n^{U1}$}
\label{fig:1-18}
\end{center}
\end{minipage}
\begin{minipage}{.5\linewidth}
\begin{center}
\includegraphics[width=8cm,height=7cm]{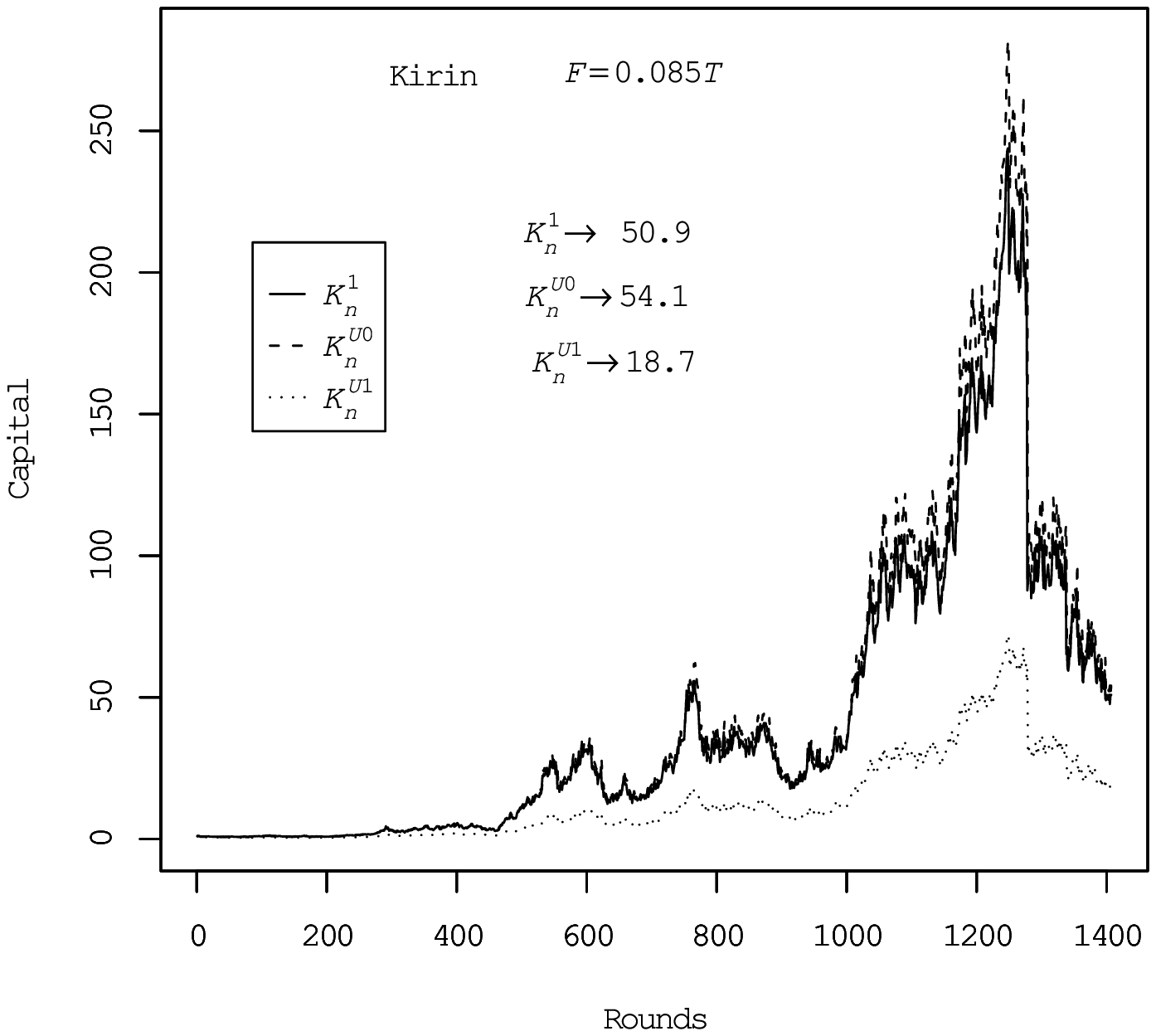}
\vspace*{-11mm}   
\caption{$K_n^1,\ K_n^{U0},\ K_n^{U1}$}
\label{fig:1-19}
\end{center}
\end{minipage}
\begin{minipage}{.5\linewidth}
\begin{center}
\includegraphics[width=8cm,height=7cm]{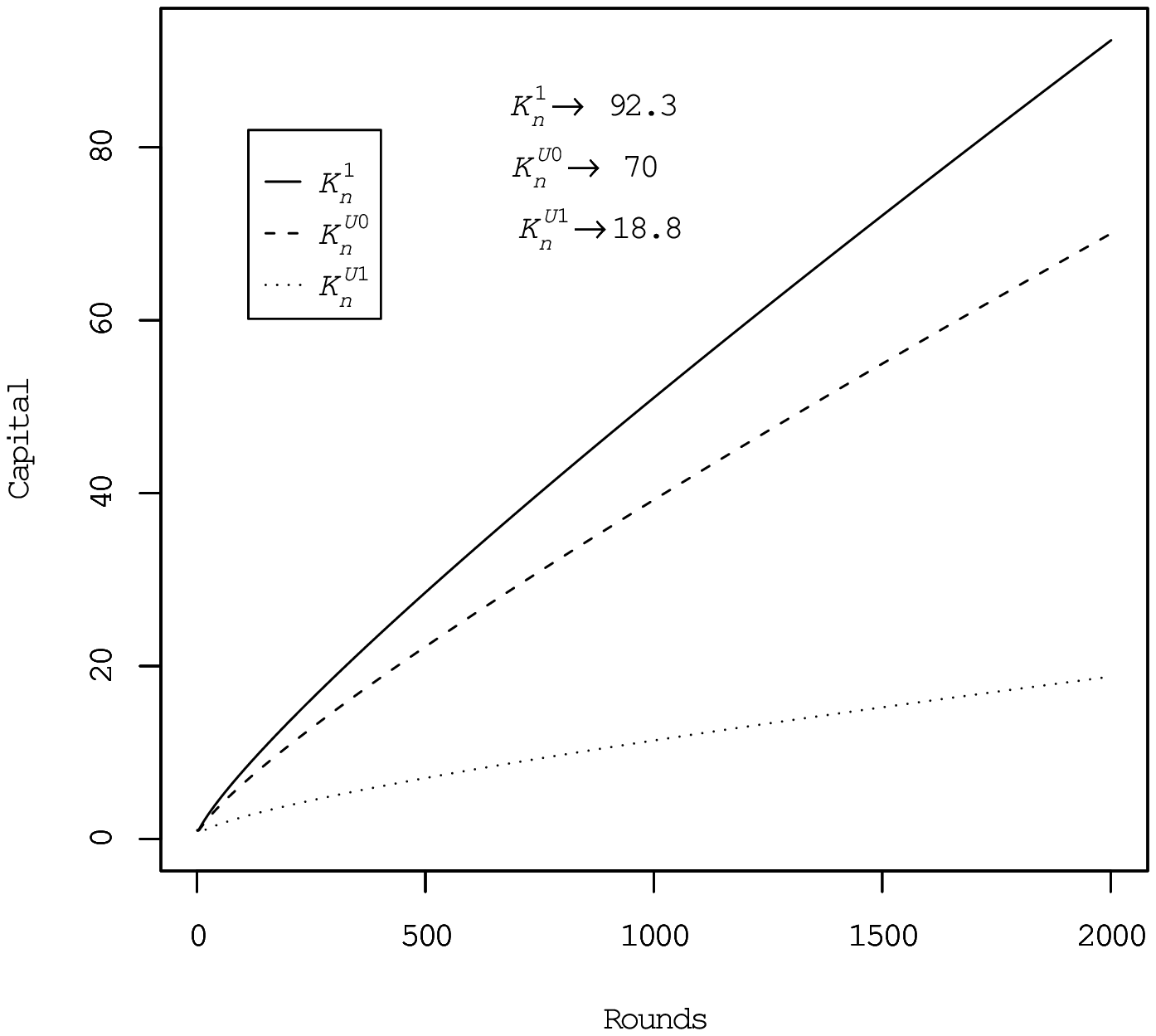}
\vspace*{-11mm}   
\caption{$K_n^1,\ K_n^{U0},\ K_n^{U1}$}
\label{fig:1-20}
\end{center}
\end{minipage}
\begin{minipage}{.5\linewidth}
\begin{center}
\includegraphics[width=8cm,height=7cm]{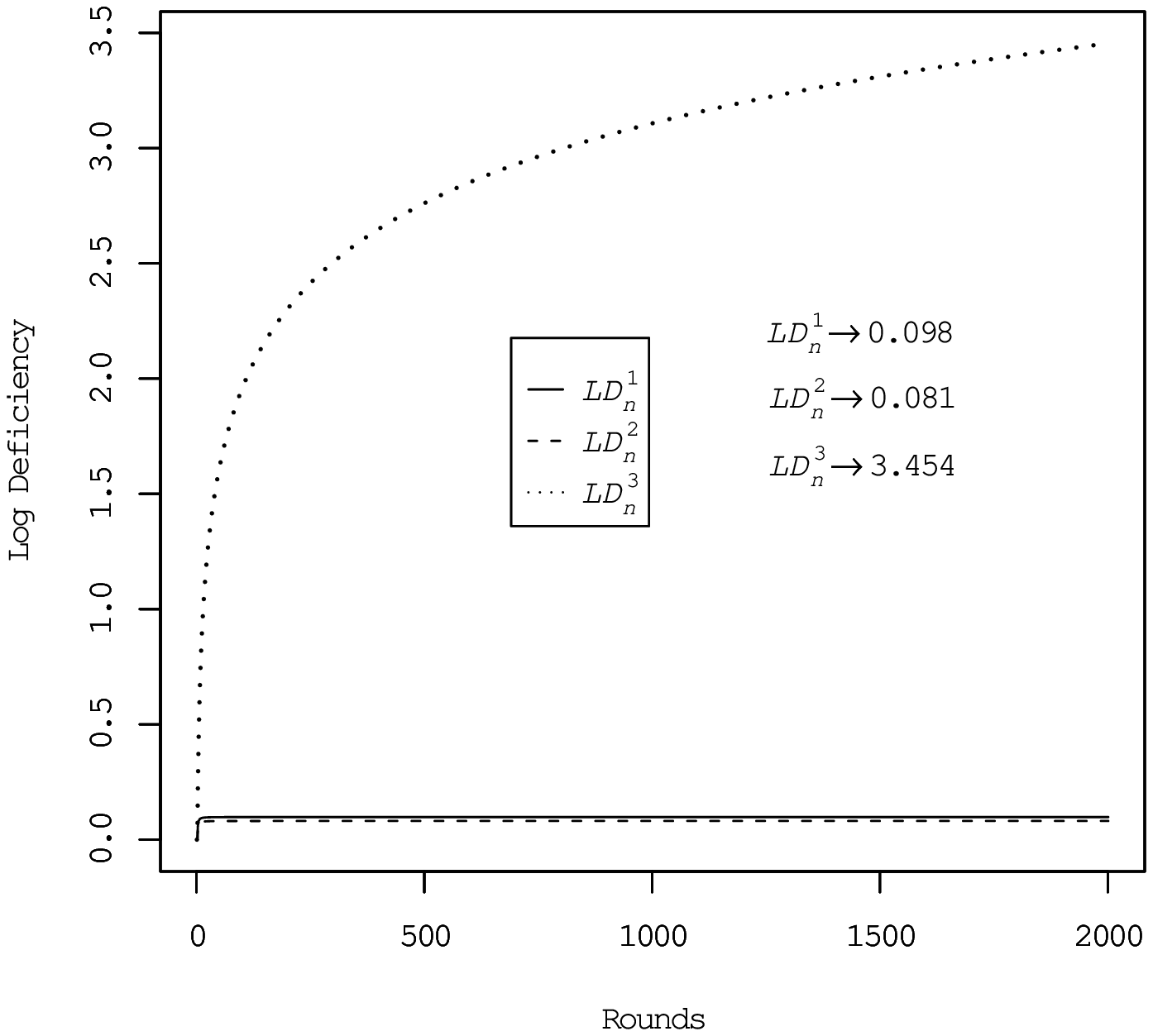}
\vspace*{-11mm}   
\caption{Log deficiency processes}
\label{fig:1-21}
\end{center}
\end{minipage}
\begin{minipage}{.5\linewidth}
\begin{center}
\includegraphics[width=8cm,height=7cm]{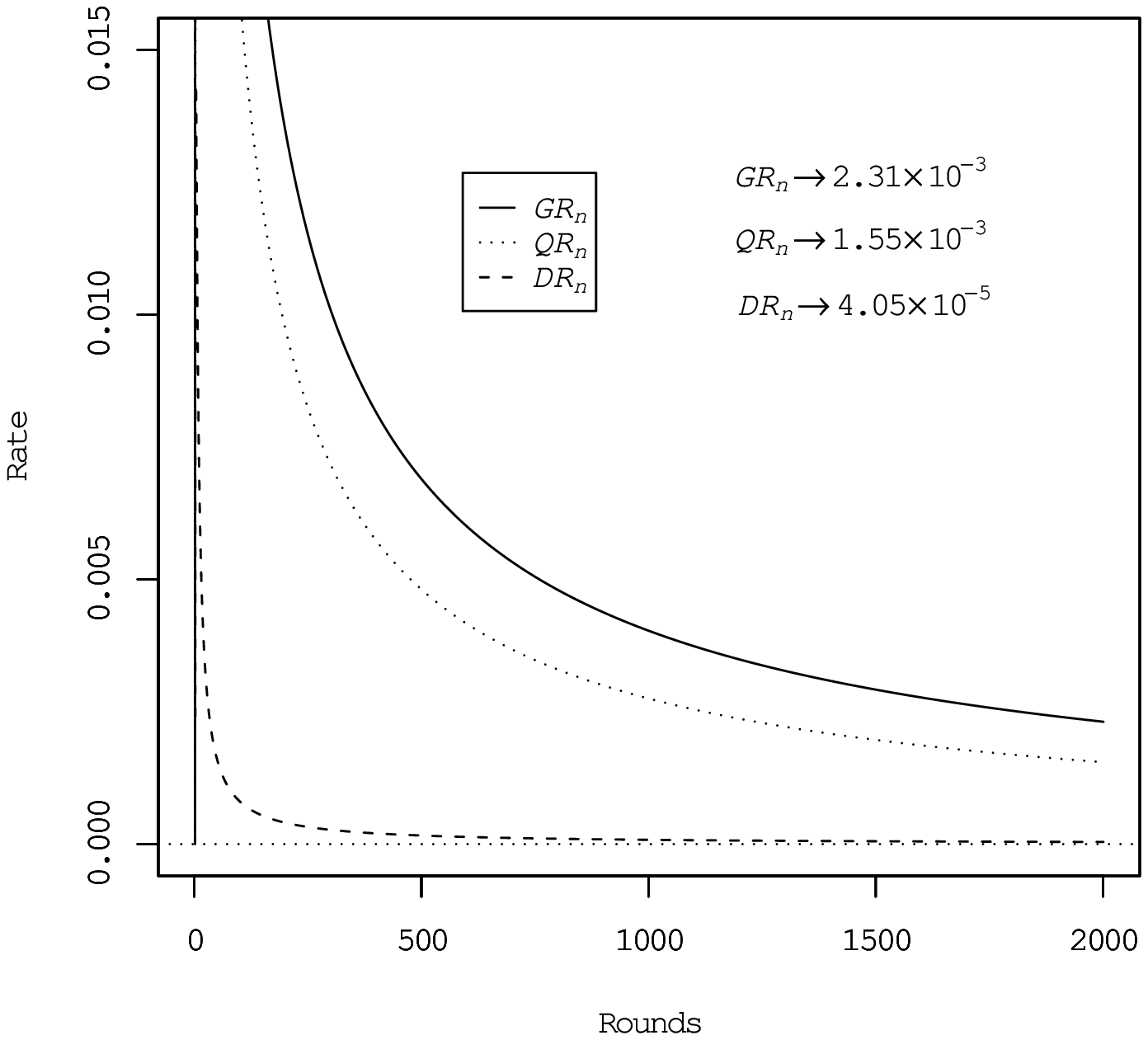}
\vspace*{-11mm}   
\caption{Rate processes}
\label{fig:1-22}
\end{center}
\end{minipage}
\end{figure}


\section{Some discussions}
\label{sec:discussion}

In this paper we proposed a sequential optimizing strategy
in multi-dimensional bounded forecasting game and showed that
it is a very flexible strategy.  From a theoretical viewpoint
it allowed us to prove 
a generalized form of the strong law of large numbers.
{}From a practical viewpoint the strategy is easy to implement even 
in high dimensions
and its performance is competitive against universal portfolio.

Theoretical comparison of our strategy with universal portfolio 
needs more detailed asymptotic investigation of 
the capital processes of these strategies.
This is left to our future research.

In Section \ref{sec:high} as a limit order type strategy we
considered successive stopping times defined by a sphere of radius
$\delta$ for the vector of returns (cf.\ (\ref{eq:1-5})), which is based on the standard Euclidean norm in ${\mathbb R}^d$. We note that other
boundaries based on other norms which are equivalent to the standard one provide the same result stated in Theorem \ref{thm:4-1}.

Theorem \ref{thm:gen-slln} 
for the case of $\sup_N V_N < \infty$ does not provide a game-theoretic
version of Kolmogorov's three series theorem.  
It only implies that $S_N$, $N=1,2,\dots$,  are bounded. However we expect
that a game-theoretic version of 
Kolmogorov's three series theorem can be established by appropriate
modification of our strategy.  This topic is also left to our future research.

\bigskip
\appendix

\section{A convergence lemma}
\label{sec:convergence-lemma}

Let $u_1, u_2, \dots$  be a sequence of points in 
${\mathbb R}^d$.  We assume  that they are bounded:
$\Vert u_n\Vert \le 1$, $\forall n$, 
and that $u_1, \dots, u_d$ are linearly independent.
Define 
\[
y_n = (u_1 u_1\tp + u_2 u_2\tp + \dots + u_{n-1} u_{n-1}\tp)^{-1} u_n \in {\mathbb R}^d.
\]
Then we have the following lemma.  It is trivial for $d=1$, but
for $d>1$ we need a careful argument.

\begin{lemma}
\label{lem:convergence}
\[
y_n \rightarrow 0, \qquad (n\rightarrow\infty).
\]
\end{lemma}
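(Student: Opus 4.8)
The plan is to reduce the statement to a one-line scalar claim and then prove that scalar claim by splitting ${\mathbb R}^d$ into a ``convergent'' and a ``divergent'' subspace. Write $A_m = u_1u_1\tp+\cdots+u_mu_m\tp$, so that $y_n = A_{n-1}^{-1}u_n$. Since $u_1,\dots,u_d$ are linearly independent, $A_d$ is positive definite and $A_m\succeq A_d$ for $m\ge d$, hence $\lambda_0 := \lambda_{\min}(A_d)\le\lambda_{\min}(A_{n-1})$ for $n-1\ge d$. Using the spectral decomposition of $A_{n-1}$ one gets at once
\[
\Vert y_n\Vert^2 = u_n\tp A_{n-1}^{-2}u_n \le \frac{1}{\lambda_{\min}(A_{n-1})}\,u_n\tp A_{n-1}^{-1}u_n \le \frac{a_n}{\lambda_0},\qquad a_n := u_n\tp A_{n-1}^{-1}u_n ,
\]
so it suffices to prove $a_n\to 0$. (A purely determinantal argument is not enough: since $1+a_n=|A_n|/|A_{n-1}|$ and $|A_N|\le(\tr A_N/d)^d\le(N/d)^d$, one only obtains $\sum_{n\le N}a_n=O(\log N)$, which does not force $a_n\to 0$; a structural argument is needed.)

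Next I would introduce the linear subspace $L=\{v\in{\mathbb R}^d : \sum_{i\ge 1}(v\tp u_i)^2<\infty\}$ and work in the orthogonal decomposition ${\mathbb R}^d = L\oplus L^\perp$. Write $p_n=\Pi_L u_n$, $q_n=\Pi_{L^\perp}u_n$, and $P_m=\sum_{i\le m}p_ip_i\tp$, $Q_m=\sum_{i\le m}q_iq_i\tp$, $R_m=\sum_{i\le m}p_iq_i\tp$. Expanding $p_n$ in an orthonormal basis of $L$ shows $\Vert p_n\Vert\to 0$ and $P_m\to P_\infty$ with $\tr P_\infty=\sum_i\Vert p_i\Vert^2<\infty$. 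On the other hand every $w\in L^\perp\setminus\{0\}$ lies outside $L$, so $\sum_i(w\tp q_i)^2=\sum_i(w\tp u_i)^2=\infty$; combining this with the monotonicity of $m\mapsto Q_m$ and compactness of the unit sphere of $L^\perp$ (pass to a convergent subsequence of minimal eigenvectors) yields $\lambda_{\min}(Q_m)\to\infty$, so $Q_m$ is invertible for large $m$.

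The heart of the argument is then an exact ``complete the square'' identity for $a_n$. Writing $A_{n-1}$ in block form relative to $L\oplus L^\perp$ and letting $\tilde P=P_{n-1}-R_{n-1}Q_{n-1}^{-1}R_{n-1}\tp$ be the Schur complement of its $L$-block, the block-inverse formula gives
\[
a_n = \tilde p\tp\tilde P^{-1}\tilde p + q_n\tp Q_{n-1}^{-1}q_n,\qquad \tilde p = p_n - R_{n-1}Q_{n-1}^{-1}q_n .
\]
The last term is harmless: $q_n\tp Q_{n-1}^{-1}q_n\le\Vert q_n\Vert^2/\lambda_{\min}(Q_{n-1})\le 1/\lambda_{\min}(Q_{n-1})\to 0$. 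For the first term, $\tilde P^{-1}$ is the $L$-block of $A_{n-1}^{-1}$, so $\lambda_{\max}(\tilde P^{-1})\le\lambda_{\max}(A_{n-1}^{-1})=1/\lambda_{\min}(A_{n-1})\le 1/\lambda_0$, and it only remains to show $\tilde p\to 0$.

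The decisive point — and the step I expect to be the main obstacle — is controlling the cross term $R_{n-1}Q_{n-1}^{-1}q_n$, i.e.\ the interaction between the bounded ($L$) directions and the blowing-up ($L^\perp$) directions. This is handled by the identity $R_{n-1}Q_{n-1}^{-1}R_{n-1}\tp = P_{n-1}-\tilde P\preceq P_{n-1}\preceq P_\infty$, which gives $\Vert R_{n-1}Q_{n-1}^{-1/2}\Vert^2 = \Vert R_{n-1}Q_{n-1}^{-1}R_{n-1}\tp\Vert\le\Vert P_\infty\Vert$ and hence $\Vert R_{n-1}Q_{n-1}^{-1}q_n\Vert\le\Vert P_\infty\Vert^{1/2}\,(q_n\tp Q_{n-1}^{-1}q_n)^{1/2}\to 0$. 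Together with $\Vert p_n\Vert\to 0$ this yields $\tilde p\to 0$, so $\tilde p\tp\tilde P^{-1}\tilde p\le\Vert\tilde p\Vert^2/\lambda_0\to 0$, whence $a_n\to 0$ and finally $y_n\to 0$. The degenerate cases $L=\{0\}$ and $L={\mathbb R}^d$ are covered by the same computation with one of the two blocks absent.
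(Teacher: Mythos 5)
Your proof is correct, but it takes a genuinely different route from the paper's. The paper argues by contradiction and compactness: assuming $y_n\not\to 0$, it extracts subsequences with $y_{n_k}\to a\neq 0$ and $u_{n_k}\to b\neq 0$, then exploits the identity $y_n\tp A_{n-1}y_n=y_n\tp u_n=\sum_{i<n}(y_n\tp u_i)^2$ to derive a contradiction in both cases $a\tp b\neq 0$ (infinitely many terms near $(a\tp b)^2$ force divergence) and $a\tp b=0$ (the first $d$ terms alone keep the sum bounded away from $0$ while the right-hand side tends to $0$). Your argument is constructive and structural: you split ${\mathbb R}^d$ into the subspace $L$ on which $\sum_i(v\tp u_i)^2$ converges and its orthogonal complement on which it diverges, show $\lambda_{\min}(Q_m)\to\infty$ on $L^\perp$ and $\sum_i\Vert p_i\Vert^2<\infty$ on $L$, and control the interaction via the Schur complement bound $R_{n-1}Q_{n-1}^{-1}R_{n-1}\tp\preceq P_\infty$. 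All the individual steps check out (the subspace property of $L$, the compactness argument for $\lambda_{\min}(Q_m)\to\infty$, the complete-the-square identity for $a_n$, and the operator-norm estimate on the cross term). A notable difference in logical order: you first prove $a_n=u_n\tp A_{n-1}^{-1}u_n\to 0$, which is exactly the statement of Corollary \ref{cor:symmetric-half}, and then deduce the lemma from it via $\Vert y_n\Vert^2\le a_n/\lambda_0$; the paper proceeds in the opposite direction, proving the lemma first and obtaining the corollary as an afterthought from $\Vert\tilde y_n\Vert^2=u_n\tp y_n$. Your approach is longer but yields more quantitative information about where the sequence $\{u_i\}$ accumulates its ``energy,'' and your preliminary remark that the determinantal bound $\sum_{n\le N}a_n=O(\log N)$ cannot by itself force $a_n\to 0$ correctly identifies why some such structural or compactness argument is unavoidable.
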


\begin{proof} We first show that $y_n$ is bounded.
Let $\lambda_{\min,d} > 0$ denote the minimum eigenvalue of 
$u_1 u_1\tp + \cdots + u_d u_d\tp$. Then
all the eigenvalues of $u_1 u_1\tp + \cdots + u_n u_n\tp$, $n\ge d$, are
greater then equal to $\lambda_{\min,d}$.
Then all the eigenvalues of $(u_1 u_1\tp  + \cdots + u_n u_n\tp)^{-2}$ 
are less than or equal to $\lambda_{\min, d}^{-2}$. Hence
\begin{equation}
\label{eq:bounded}
\Vert y_n\Vert^2 \le \lambda_{\min,d}^{-2} \Vert u_n\Vert^2
\end{equation}
and $y_n$, $n=1,2,\dots$, are bounded.

Now we argue by contradiction. Suppose that $y_n$, $n=1,2,\dots$, do not converge to zero.
Then there exists a subsequence $n_k$, $k=1,2,\dots$ 
such that 
$y_{n_k}\rightarrow a \neq 0$, $(k\rightarrow\infty)$.
In view of (\ref{eq:bounded}), if $u_{n_k}\rightarrow 0$ then 
$y_{n_k} \rightarrow 0$, which is a contradiction.
Therefore $u_{n_k}$, $k=1,2,\dots$, do not converge to 0. Then there exists a
further subsequence $\{\tilde n_k\}\subset \{n_k\}$ such that
$u_{\tilde n_k} \rightarrow b \neq 0$. Then
$y_{\tilde n_k} \rightarrow a$,  $u_{\tilde n_k} \rightarrow b$.
Consider 
\[
 (u_1 u_1\tp + \dots + u_{\tilde n_k -1} u_{\tilde n_k-1 }\tp) y_{\tilde n_k} 
= u_{\tilde n_k}.
\]
Then 
\[
 (u_1 u_1\tp  + \dots + u_{\tilde n_k-1} u_{\tilde n_k-1}\tp) y_{\tilde n_k} \rightarrow b.
\]
Multiplying by $y_{\tilde n_k}\tp$ from the left we have
\[
y_{\tilde n_k}\tp
(u_1 u_1\tp + \dots + u_{\tilde n_k-1} u_{\tilde n_k-1}\tp) y_{\tilde n_k}
=y_{\tilde n_k}\tp u_{\tilde n_k} \rightarrow a\tp b.
\]
Now the left-hand side is written as
\[
(y_{\tilde n_k}\tp u_1)^2 + \dots + 
(y_{\tilde n_k}\tp  u_{\tilde n_k-1})^2 .
\]
Note that for sufficiently large $k,k'$, 
$ (y_{\tilde n_k}\tp  u_{\tilde n_{k'}})^2$ are all close to 
$(b\tp a)^2$.  Since we have infinitely many such terms, 
the left-hand side diverges to $\infty$ if 
$b\tp a\neq 0$.
This contradicts the fact that the right-hand side converges to a finite value.
Therefore $b\tp a=0$.  But then 
\begin{align*}
&\liminf (y_{\tilde n_k}\tp u_1)^2  +  \dots + 
(y_{\tilde n_k}\tp  u_{\tilde n_k-1})^2  \ge
(y_{\tilde n_k}\tp u_1)^2 + \dots + (y_{\tilde n_k}\tp u_d)^2 \\
& \qquad \rightarrow (a\tp u_1)^2 + \dots + (a\tp u_d)^2 > 0,
\end{align*}
which is again a contradiction.
\end{proof}

We also present the following corollary of the above lemma.

\begin{corollary}
\label{cor:symmetric-half}
With the same notation and conditions  as in Lemma \ref{lem:convergence2}
\[
\tilde y_n= (u_1 u_1\tp + u_2 u_2\tp + \dots + u_{n-1} u_{n-1}\tp)^{-1/2} u_n  \rightarrow 0,
\quad (n\rightarrow \infty).
\]
\end{corollary}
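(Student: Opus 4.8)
\textbf{Proof proposal for Corollary \ref{cor:symmetric-half}.}
The plan is to reduce the square-root statement to Lemma \ref{lem:convergence} by a one-line identity together with Cauchy--Schwarz. Write $S_{n-1} = u_1 u_1\tp + \dots + u_{n-1} u_{n-1}\tp$, which is positive definite for $n \ge d+1$ since $u_1,\dots,u_d$ are linearly independent (this is exactly the point noted at the start of the proof of Lemma \ref{lem:convergence}), so $S_{n-1}^{-1}$ and $S_{n-1}^{-1/2}$ are well defined for all large $n$. Then $y_n = S_{n-1}^{-1} u_n$ and $\tilde y_n = S_{n-1}^{-1/2} u_n$.

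First I would compute $\Vert \tilde y_n\Vert^2 = \tilde y_n\tp \tilde y_n = u_n\tp S_{n-1}^{-1/2} S_{n-1}^{-1/2} u_n = u_n\tp S_{n-1}^{-1} u_n = u_n\tp y_n$. Next, by Cauchy--Schwarz and the hypothesis $\Vert u_n\Vert \le 1$, this gives $\Vert \tilde y_n\Vert^2 = u_n\tp y_n \le \Vert u_n\Vert\,\Vert y_n\Vert \le \Vert y_n\Vert$. Finally, Lemma \ref{lem:convergence} yields $y_n \to 0$, hence $\Vert \tilde y_n\Vert^2 \le \Vert y_n\Vert \to 0$, so $\tilde y_n \to 0$ as $n\to\infty$.

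There is essentially no obstacle here: the entire content is the observation that $\Vert \tilde y_n\Vert^2$ equals the bilinear pairing $u_n\tp y_n$, after which boundedness of $\{u_n\}$ and the already-proved convergence $y_n\to 0$ finish the argument immediately. The only point requiring a moment's care is that $S_{n-1}$ must be invertible, which, as in Lemma \ref{lem:convergence}, holds once $n\ge d+1$ because of the linear independence assumption on $u_1,\dots,u_d$; the finitely many earlier indices are irrelevant for the limit.
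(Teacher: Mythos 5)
Your argument is correct and is exactly the paper's own proof: the paper derives the corollary from the identity $\Vert \tilde y_n\Vert^2 = u_n\tp y_n$ together with the boundedness of $u_n$ and Lemma \ref{lem:convergence}, which is precisely your Cauchy--Schwarz step. Nothing to add.
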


This corollary follows easily 
from the fact that $\Vert \tilde y_n\Vert^2 = u_n\tp y_n$ and
$u_n$ is bounded.

Based on the above corollary 
we give a proof of Lemma \ref{lem:convergence2}.
Before going into the proof, we summarize some
facts on matrix inequalities.
For a symmetric matrix $A$, let 
$A > 0$ mean that $A$ is positive definite.
If $A\ge B >0$, then $B^{-1} \ge A^{-1} > 0$ (Lemma 4.2 of \cite{anderson-takemura}).
Note that $A\ge B\ge 0$ does not imply $A^2 \ge B^2$ 
(e.g.\ Chapter 1 of \cite{zhan}), which complicates our proof.  

\begin{proof}[Proof of Lemma \ref{lem:convergence2}]
By the definition of $C_1$ in (\ref{eq:C1})  we have
\[
V_{0,n-1}(\alpha_{n-1}^*, \alpha_n^*) \ge  \frac{1}{C_1^2} V_{0,n-1}(0,0), 
\]
where $V_{0,n-1}(0,0)=\sum_{i=-n_0+1}^n x_i x_i\tp$ 
is positive definite because of the training data.
Write
\[
\Delta\alpha_n^* = V_{0,n-1}(\alpha_{n-1}^*, \alpha_n^*)^{-1/2} 
V_{0,n-1}(\alpha_{n-1}^*, \alpha_n^*)^{-1/2} x_n(\alpha_{n-1}^*).
\]
Then
\[
\Vert \Delta\alpha_n^*  \Vert^2 
\le  \frac{x_n(\alpha_{n}^*)\tp V_{0,n-1}(\alpha_{n-1}^*, \alpha_n^*)^{-1} x_n(\alpha_n^*)}
{\lambda_{\min,0,n-1}(\alpha_{n-1}^*,\alpha_n^*)},
\]
where $\lambda_{\min,0,n-1}(\alpha_{n-1}^*,\alpha_n^*)$ is  the
minimum eigenvalue of  $V_{0,n-1}(\alpha_{n-1}^*, \alpha_n^*)$.
Let $\lambda_{\min,0,0}$ denote the 
minimum eigenvalue of  $V_{0,0}$. Then 
$\lambda_{\min,0,n-1}(\alpha_{n-1}^*,\alpha_n^*) \ge  \lambda_{\min,0,0}/C_1^2$
for all $n\ge 1$ and
\[
\Vert \Delta\alpha_n^*  \Vert^2 
\le \frac{C_1^2}{\lambda_{\min,0,0}}
 x_n(\alpha_{n}^*)\tp V_{0,n-1}(\alpha_{n-1}^*, \alpha_n^*)^{-1} x_n(\alpha_n^*).
\]
For $n\ge 1$, $1+\alpha_n^* \cdot x_n \ge \epsilon_0$. Hence 
\[
\Vert \Delta\alpha_n^*  \Vert^2 
\le \frac{C_1^2}{\epsilon_0^2 \lambda_{\min,0,0}}
 x_n\tp V_{0,n-1}(\alpha_{n-1}^*, \alpha_n^*)^{-1} x_n
\le \frac{C_1^4}{\epsilon_0^2 \lambda_{\min,0,0}}
x_n\tp V_{0,n-1}(0,0)^{-1} x_n.
\]
The right-hand side converges to 0 by Corollary 
\ref{cor:symmetric-half}.
\end{proof}

\end{document}